\definecolor{todo}{rgb}{1,0,0}
\definecolor{answer}{rgb}{0,0,1}
\definecolor{new}{rgb}{1,0,1}
\definecolor{conditional}{rgb}{0,1,0}
\definecolor{e-mail}{rgb}{0,.40,.80}
\definecolor{reference}{rgb}{.20,.60,.22}
\definecolor{mrnumber}{rgb}{.80,.40,0}
\definecolor{citation}{rgb}{0,.40,.80}
\DeclareMathAlphabet{\mathpzc}{OT1}{pzc}{m}{it}
\theoremstyle{plain}
\newtheorem{theo}{Theorem}[section]
\newtheorem{prop}[theo]{Proposition}
\newtheorem{cor}[theo]{Corollary}
\theoremstyle{definition}
\newtheorem{defi}[theo]{Definition}
\theoremstyle{remark}
\newtheorem{rem}[theo]{Remark}
\theoremstyle{definition}
\newtheorem{ex}[theo]{Example}
\newcommand{\PV}{PV}
\newcommand{\f}{\phi}
\newcommand{\ida}{\mathfrak{a}}
\newcommand{\idb}{\mathfrak{b}}
\newcommand{\p}{\mathfrak{p}}
\newcommand{\q}{\mathfrak{q}}
\newcommand{\m}{\mathfrak{m}}
\newcommand{\spec}{\operatorname{Spec}}
\newcommand{\CC}{\mathcal C}
\newcommand{\ZZ}{\mathbb{Z}}
\newcommand{\NN}{\mathbb{N}}
\newcommand{\QQ}{\mathbb{Q}}
\newcommand{\Gl}{\operatorname{GL}}
\newcommand{\Sl}{\operatorname{SL}}
\newcommand{\AutKfs}{\operatorname{Aut}_{\fs}}
\newcommand{\AlgKs}{\operatorname{Alg}_{k\text{-}\s}}
\newcommand{\Ks}{{k\text{-}\s}}
\newcommand{\Hom}{\operatorname{Hom}}
\newcommand{\Q}{\mathrm{Quot}}
\newcommand{\id}{\operatorname{id}}
\newcommand{\sdim}{\s\text{-}\dim}
\newcommand{\C}{\mathbb{C}}
\newcommand{\kk}{\mathbf{k}}
\newcommand{\Alg}{\mathrm{Alg}}
\newcommand{\Set}{\mathrm{Sets}}
\newcommand{\Gm}{\mathbf{G_m}}
\newcommand{\quot}{\operatorname{Quot}}
\newcommand{\Char}{\operatorname{char}}
\newcommand{\Gal}{\operatorname{Gal}}
\newcommand{\s}{\sigma}
\newcommand{\fs}{{\phi\sigma}}
\newcommand{\fpsf}{$\phi$-pseudo $\sigma$-field\xspace}
\newcommand{\per}{\operatorname{period}}
\newcommand{\seq}{\operatorname{Seq}}
\newcommand{\Ge}{\geqslant}
\begin{document}

\author{Alexey Ovchinnikov\affil{1} and Michael Wibmer\affil{2}}
\address{
\affilnum{1} CUNY Queens College, Department of Mathematics, 65-30 Kissena Blvd,
Queens, NY 11367-1597, USA\\
\affilnum{2} RWTH Aachen, 52056, Aachen, Germany}

\correspdetails{aovchinnikov@qc.cuny.edu\, and\, michael.wibmer@matha.rwth-aachen.de}

\title{$\sigma$-Galois theory of linear difference equations}

\begin{abstract} We develop a Galois theory for systems of linear difference equations with an action of an endomorphism $\s$. This provides a technique to
test whether solutions of such systems satisfy $\s$-polynomial equations and, if yes, then characterize those. We also show how to apply our work to study isomonodromic difference equations and difference algebraic properties of meromorphic functions.
\end{abstract}

\maketitle

\section{Introduction}
Inspired by the numerous applications of the differential algebraic independence results from~\cite{HardouinSinger:DifferentialGaloisTheoryofLinearDifferenceEquations}, we develop a Galois theory with an action of an endomorphism $\s$ for systems of linear difference equations of the form $\f (y)=Ay$, where $A \in \Gl_n(K)$ and $K$ is 
 a $\fs$-field, that is, a field with two given commuting endomorphisms $\f$ and $\s$, like in Example~\ref{ex:fsfields}. This provides a technique to
test whether solutions of such systems satisfy $\s$-polynomial equations and, if yes, then characterize those. 
Galois groups in this approach are groups of invertible matrices defined by $\s$-polynomial equations with coefficients in the $\s$-field  $K^\f := \{a \in K\:|\: \f(a) = a\}$. In more technical terms, such groups are functors from $K^\f$-$\s$-algebras to sets represented by finitely $\s$-generated $K^\f$-$\s$-Hopf algebras \cite{DiVizioHardouinWibmer:DifferenceGaloisofDifferential} . 
Also, our work is a highly non-trivial generalization of \cite{OvchinnikovTrushinetal:GaloisTheory_of_difference_equations_with_periodic_parameters}, where similar problems were considered but $\s$ was required to be of finite order (there exists $n$ such that $\s^n = \id$).

 Our main result is a construction of a $\s$-Picard--Vessiot ($\s$-PV) extension (see Theorem~\ref{theo: existence of PV ring}), that is, a minimal $\fs$-extension of the base $\fs$-field $K$ containing solutions of $\f(y)=Ay$. It turns out that the standard constructions and proofs in the previously existing difference Galois theories do not work in our setting. Indeed, this is mainly due to the reason that even if the field $K^\f$ is $\s$-closed \cite{Trushin:DifferenceNullstellensatz}, consistent  systems of $\s$-equations (such that the equation $1=0$ is not a $\s$-algebraic consequence of the system) with coefficients in $K^\f$ might not have a solution with coordinates in $K^\f$ (see more details in Remarks~\ref{rem:subtle} and~\ref{rem:subtle2}). However, our method avoids this issue. In our approach, a $\s$-PV extension is built iteratively (applying $\s$), by carefully choosing a suitable usual \PV\ extension \cite{SingerPut:difference} at each step, and then ``patching'' them together. This is a difficult problem and requires several preparatory steps as described in \S\ref{sec:existence}. A similar approach was also taken in \cite[Thm.~8]{Wibmer:existence} for systems of differential equations with parameters. However, our case is more subtle and, as a result, requires more work.

Galois theory of difference equations $\f(y) =Ay$ without the action of $\s$ was studied in \cite{SingerPut:difference,ChatzidakisHardouinSinger:OntheDefinitionsOfDifferenceGaloisgroups,Amano,AmanoMasuoka:artiniansimple,AmanoMasuokaTakeuchi:HopfPVtheory,Andre,Wibmer:OnTheGaloisTheoryOfStronglyNormal}, with a non-linear generalization considered in \cite{Granier,Morikawa}, as well as with an action of a derivation $\partial$ in~\cite{CharlotteArxiv,CharlotteComp,HardouinSinger:DifferentialGaloisTheoryofLinearDifferenceEquations,CharlotteLucia11,CharlotteLucia12,CharlotteLucia13,DiVizioHardouin:DescentandConfluence,HDV,LuciaSMF}. The latter works provide algebraic methods  to test whether solutions of difference equations satisfy polynomial differential equations (see also \cite{Moshe2012} for a general Tannakian approach). In particular,
these methods can be used to prove H\"older's theorem that states that the $\Gamma$-function, which satisfies the difference equation $\Gamma(x+1)=x\cdot\Gamma(x)$,
satisfies no non-trivial differential equation over $\C(x)$. A Galois theory of differential equations $\partial(y)=Ay$ (the matrix $A$ does not have to be invertible in this case) with an action of $\s$ was also developed in~\cite{DiVizioHardouinWibmer:DifferenceGaloisofDifferential}.

Our work has numerous applications to studying difference and differential algebraic properties of functions. 
Isomonodromic $q$-difference equations, which lead to $q$-difference Painlev\'e equations, have been recently studied in~\cite{Hay,Murata,Joshi1,Joshi2}. In Theorem~\ref{thm:isomcrit}, we show how this property can
be detected using our $\s$-PV theory. On the other hand,
Theorem~\ref{thm:app} gives a general $\s$-algebraic independence (called difference hypertranscendency in~\cite{Takano}) test for first-order $\f$-difference equations. Theorem~\ref{thm:Nev} translates this to a $\s$-algebraic dependence test over the field of meromorphic functions with Nevanlinna growth order less than $1$ (see~\eqref{eq:Nevlimit}). 
It turns out that our methods allow us to generalize a modification (Lemma~\ref{lem:fromM1}) of complex-analytic results from \cite{BankKaufman}, which is another interesting application.
Theorem~\ref{thm:6} combined with either Theorems~\ref{thm:app} or~\ref{thm:Nev} can be used as computational tool. We illustrate this in 
Examples~\ref{ex:Gmmaf}  
 and~\ref{ex:power} as well as show how our work could
be used to study differential algebraic properties of functions given by power series in Example~\ref{ex:differential}.
Not only do we show practical applications of our work, we also hope that our theory will be applied in the future in diverse areas, such as described in~\cite{SalvyISSAC,SalvyJSC} and the papers on isomonodromic $q$-difference equations mentioned above.

The paper is organized as follows. We start with the basic definitions, notation, and review of existing results in \S\S\ref{sec:basicdefs}, \ref{sec:classicalPV}. We then introduce $\s$-PV extensions and study their basic properties in \S\ref{sec:sPVdef}. The main result, existence of $\s$-\PV\ extensions,  is contained in \S\ref{sec:existence}, which starts by developing the needed technical tools. We extend the main result in \S\ref{subsec: special existence} to include more useful situations in which $\s$-\PV\ extensions exist. Uniqueness for $\s$-\PV\ extensions is established in \S\ref{sec:uniqueness}. We recall from the appendix of \cite{DiVizioHardouinWibmer:DifferenceGaloisofDifferential} what difference algebraic groups are, establish the $\s$-Galois correspondence, and show that the $\s$-dimension of the $\s$-Galois group coincides with the $\s$-dimension of the $\s$-\PV\ extension in \S\ref{sec:Galoiscorrespondence}. The relation between isomonodromic difference equations and our Galois theory is given in~\S\ref{sec:isomonodromic}. Applications to difference and differential algebraic properties of functions, including functions with a slow Nevanlinna growth order, and illustrative examples are given in \S\ref{sec:examples}.

We are grateful to S.~Abramov, Y.~Andr\'e, D.~Drasin, D.~Khmelnov, S.~Merenkov, J.~Roques,  and D.~Trushin for their helpful suggestions.
A. Ovchinnikov was supported by the grants: NSF  CCF-0952591 and  PSC-CUNY  No.~60001-40~41.

\section{$\sigma$-\PV\ extensions}

\subsection{Basic definitions and preliminaries}\label{sec:basicdefs}

We need to introduce some terminology from difference algebra. Standard references for difference algebra are \cite{Cohn:difference} and \cite{Levin:difference}.
All rings are assumed to be commutative. By a \emph{$\f$-ring}, we mean a ring $R$ equipped with a ring endomorphism $\f\colon R\to R$. We do not require that $\f$ is an automorphism. If $\f$ is an automorphism, we say that $R$ is \emph{inversive}. By a \emph{$\fs$-ring}, we mean a ring equipped with two commuting endomorphisms $\f$ and $\s$. A morphism of $\f$-rings (or $\fs$-rings) is a morphism of rings that commutes with the endomorphisms. If the underlying ring is a field, we speak of \emph{$\f$-fields} (or \emph{$\fs$-fields}). Here are some basic examples of $\fs$-fields of interest to us:
\begin{ex} \label{ex:fsfields}\hspace{0cm}
\begin{enumerate}
\item
the $\fs$-field $M$ of meromorphic functions on $\C$ with $\f(f)(z) = f(z+z_\f)$ and $\s(f)(z) = f(z+z_\s)$, $f \in M$, $z_\f, z_\s\in\C$ and its $\fs$-subfields $\C(z)$ and $M_{<1}$, the field of meromorphic functions on $\C$ with Nevanlinna growth order less than one (see \S\ref{sec:Nev}),
\item
the $\fs$-field $M$ with $\f(f)(z) = f(z\cdot q_\f)$ and $\s(f)(z) = f(z\cdot q_\s)$, $f \in M$, $q_\f, q_\s\in\C^\times$ and its subfields $\C(z)$ and $M_{<1}$,
\item the $\fs$-field $\C(z,w)$ with $\f(f)(z,w) = f(z+z_\f,w)$ and $\s(f)(z,w) = f(z,w+w_\s)$, $f \in \C(z,w)$, $z_\f, w_\s\in\C$ and various other actions of $\f$ and $\s$ that commute.
\end{enumerate}
\end{ex}

A \emph{$\f$-ideal} in a $\f$-ring $R$ is an ideal $\ida$ of $R$ such that $\f(\ida)\subset \ida$. Similarly, one defines \emph{$\fs$-ideals} in $\fs$-rings.
A $\f$-ring is called \emph{$\f$-simple} if the zero ideal and the whole ring are the only $\f$-ideals. A $\f$-ideal $\q$ in a $\f$-ring $R$ is called $\f$-prime if $\q$ is a prime ideal of $R$ and $\f^{-1}(\q)=\q$. If $\f$ is an endomorphism of a ring $R$, then $\f^d$ is also an endomorphism of $R$ for every $d\geq 1$, and we can speak of $\f^d$-prime ideals of $R$.
A $\f$-ring $R$ is called a $\f$-domain if its zero ideal is $\f$-prime. (Equivalently, $R$ is an integral domain and $\f\colon R\to R$ is injective.) A $\f$-ideal in a $\f$-ring $R$ is called \emph{$\f$-maximal} if it is a maximal element in the set of all $\f$-ideals of $R$, not equal to $R$, ordered by inclusion.

The theory of difference fields does exhibit some pathologies. For example, two extensions of the same difference field can
be incompatible, see \cite[Ch.~5]{Levin:difference}. As it has been recognized in \cite{SingerPut:difference}, the Galois theory of linear difference equations runs much smoother if one allows certain finite products of fields instead of fields. In this context, the following definition has turned out to be useful.

\begin{defi} \label{defi: fpseudofield}
A \emph{$\f$-pseudo field} is a $\f$-simple, Noetherian $\f$-ring $K$ such that every non-zero divisor of $K$ is invertible in $K$.
\end{defi}

The concept of $\f$-pseudo fields (in certain variants) is also used in \cite{AmanoMasuoka:artiniansimple,Wibmer:thesis,Wibmer:Chevalley,Trushin:DifferenceNullstellensatz,Trushin:DifferenceNullstellsatzCaseOfFiniteGroup,HardouinSinger:DifferentialGaloisTheoryofLinearDifferenceEquations,OvchinnikovTrushinetal:GaloisTheory_of_difference_equations_with_periodic_parameters}.

If $K$ is a $\f$-pseudo field, then there exist orthogonal, idempotent elements $e_1,\ldots, e_d$ of $K$ such that
\begin{enumerate}
 \item $K=e_1\cdot K\oplus\cdots\oplus e_d\cdot K$,
\item $\f(e_1)=e_2,\ \f(e_2)=e_3, \ldots, \f(e_d)=e_1$ and
\item $e_i\cdot K$ is a field for $i=1,\ldots,d$ (so, $e_i\cdot K$ is a $\f^d$-field)
\end{enumerate}
(see, for example, \cite[Prop. 1.3.2, p. 9]{Wibmer:thesis}). The integer $d$ is called the \emph{period} of $K$ and denoted by $\per(K)$.
A $\phi$-ideal $\p$ of a $\phi$-ring $R$ is called \emph{$\f$-pseudo prime} if it is the kernel of a morphism from $R$ into some $\f$-pseudo field. Equivalently, $\p$ is of the form
\[\p=\q\cap\phi^{-1}(\q)\cap\ldots\cap\f^{-(d-1)}(\q)\]
for some $\f^d$-prime ideal $\q$ of $R$. We also call $d$ the period of $\p$ (provided that $\q$ is not $\s^{d'}$-prime for some $d'<d$). By a \emph{$\f$-pseudo domain}, we mean a $\f$-ring whose zero ideal is $\f$-pseudo prime. If $R$ is a $\f$-pseudo domain, the period of the zero ideal of $R$ is also called the period of $R$. The total ring of fractions of a $\f$-pseudo domain is a $\f$-ring in a natural way, indeed it is a $\f$-pseudo field. A $\fs$-ring $R$ is called a \emph{$\f$-pseudo $\s$-domain} if $(R,\f)$ is a $\f$-pseudo domain.

\begin{defi}
A $\fs$-ring $K$ is called a \emph{$\f$-pseudo $\s$-field} if $(K,\f)$ is a $\f$-pseudo field.
\end{defi}

Most of the employed nomenclature is self-explanatory. For example, a $K$-$\fs$-algebra is a $K$-algebra $R$ equipped with the structure of a $\fs$-ring such that the $K$-algebra structure map $K\to R$ is a morphism of $\fs$-rings. Constants are denoted by upper indices. For example, if $R$ is a $\f$-ring, then the $\f$-constants of $R$ are
\[R^\f:=\{r\in R|\ \f(r)=r\}.\]
If $K$ is a \fpsf, then $K^\f$ is a $\s$-field (as $R^\f$ is a field for any $\f$-simple $\f$-ring $R$ \cite[Lem.~1.7a), p.~6]{SingerPut:difference}.)

If $R$ is a ring, we denote  the total quotient ring of $R$, i.e., the localization of $R$ at the multiplicatively closed subset of all non-zero divisors, by $\Q(R)$.
If $K$ is a ring, $R$ a $K$-algebra, and $S$ a subset of $R$, then $K(S)$ denotes the smallest $K$-subalgebra of $R$ that contains $S$ and is closed under taking inverses.  So, explicitly \[K(S)=\left\{a/b\:|\:\ a\in K[S],\ b\in K[S]\cap R^\times\right\}\subset R.\]

If $K$ is a $\s$-ring, $R$ a $K$-$\s$-algebra, and $S$ a subset of $R$, then $K\{S\}_\s$ denotes the $K$-$\s$-subalgebra of $R$ generated by $S$, i.e., the $K$-subalgebra of $R$ generated by all elements of the form $\s^d(s)$ where $s\in S$ and $d\geq 0$. (By definition, $\s^0$ is the identity map.) If $R=K\{S\}_\s$ with $S$ finite, we say that $R$ is finitely $\s$-generated over $K$.
Let
\begin{equation}\label{eq:defanglebrackets}
K\langle S\rangle_\s:=\left\{a/b\:|\:\ a\in K\{S\}_\s,\ b\in K\{S\}_\s\cap R^\times\right\}\subset R.
\end{equation}
If $L|K$ is an extension of $\s$-pseudo fields, we say that $L$ is finitely $\s$-generated over $K$ if there exists a finite subset $S$ of $L$ such that
$K\langle S\rangle_\s=L$.

Tensor products of difference rings are considered as difference rings in a natural fashion. For example, if $R$ is a $\f$-ring and $S$, $T$ are $R$-$\f$-algebras then $S\otimes_RT$ becomes an $R$-$\f$-algebra by setting $\f(s\otimes t)=\f(s)\otimes\f(t)$.

Finally, we record some simple and well-known lemmas which we shall use repeatedly throughout the text.

\begin{lemma}{\cite[Lem.~1.1.5, p.~4]{Wibmer:thesis}} \label{lemma: constants of fsimple ring}
Let $R$ be a $\f$-simple $\f$-ring. Then $\Q(R)^\f=R^\f$.
\end{lemma}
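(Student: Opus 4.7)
The inclusion $R^\f\subseteq \Q(R)^\f$ is immediate, since $\f$ on $\Q(R)$ extends $\f$ on $R$ and $R\subseteq \Q(R)$. So the content of the lemma is the reverse inclusion: any $\f$-constant of $\Q(R)$ already lies in $R$. My plan is to use the standard ``ideal of denominators'' together with $\f$-simplicity.

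Fix $x\in \Q(R)^\f$ and set
\[
\ida \;=\; \{r\in R \mid rx\in R\}.
\]
This is an ideal of $R$ by the $R$-module structure on $\Q(R)$, and it is nonzero: writing $x=a/b$ with $b$ a non-zero divisor of $R$, we have $b\in \ida$. I claim $\ida$ is a $\f$-ideal. Indeed, if $r\in\ida$, then $rx\in R$, and using $\f(x)=x$ we compute
\[
\f(r)\cdot x \;=\; \f(r)\cdot \f(x) \;=\; \f(rx) \;\in\; R,
\]
so $\f(r)\in\ida$.

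Now invoke $\f$-simplicity of $R$. The $\f$-ideal $\ida$ is nonzero, hence must equal $R$, so $1\in\ida$ and therefore $x=1\cdot x\in R$. Combined with $\f(x)=x$, this gives $x\in R^\f$, establishing $\Q(R)^\f\subseteq R^\f$.

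There is no real obstacle here: the only non-formal input is $\f$-simplicity, which is used precisely to upgrade ``there exists a denominator for $x$'' to ``$1$ is a denominator for $x$''. The argument does not even require that $\f$ be injective or surjective on $R$.
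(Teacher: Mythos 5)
Your proof is correct and is essentially the standard argument: the paper gives no proof of its own (it cites Wibmer's thesis), and the proof there is exactly this denominator-ideal argument, using $\f$-simplicity to force $\{r\in R \mid rx\in R\}$ to be all of $R$. The only implicit point worth noting is that the statement presupposes $\f$ extends to $\Q(R)$ (i.e., $\f$ sends non-zero divisors to non-zero divisors), which you correctly take as given rather than reprove.
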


\begin{lemma} \label{lemma: correspondence for constants} Let $R$ be a $\f$-simple $\f$-ring and $D$ a $R^\f$-algebra (considered as constant $\f$-ring). The map
$\idb\mapsto R\otimes_{R^\f} \idb$ defines a bijection between the set of all ideals in $D$ and the set of all $\f$-ideals in $R\otimes_{R^\f}D$. The inverse map is given by $\ida\mapsto \ida\cap D$.
\end{lemma}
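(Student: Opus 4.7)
The plan is to verify that both maps are well defined, dispatch the composition $\idb=(R\otimes_{R^\f}\idb)\cap D$ by a basis argument, and attack the non-trivial composition $\ida=R\otimes_{R^\f}(\ida\cap D)$ via a shortest-expression argument that exploits $\f$-simplicity of $R$. Since $R$ is $\f$-simple, $R^\f$ is a field (as recalled in the paragraph preceding the lemma), so both $R$ and $D$ are free $R^\f$-modules and the functor $R\otimes_{R^\f}-$ is exact. For an ideal $\idb\subset D$, flatness identifies $R\otimes_{R^\f}\idb$ with a submodule, hence an ideal, of $R\otimes_{R^\f}D$; it is $\f$-stable because $\f$ acts as the identity on $D$ and hence on $\idb$. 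Conversely, $\ida\cap D$ is clearly an ideal of $D$ for every $\f$-ideal $\ida$.

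For the identity $(R\otimes_{R^\f}\idb)\cap D=\idb$, I fix an $R^\f$-basis $\{e_\alpha\}$ of $R$ with one element equal to $1$. The decomposition $R\otimes_{R^\f}D=\bigoplus_\alpha e_\alpha\otimes D$ carries $R\otimes_{R^\f}\idb$ to $\bigoplus_\alpha e_\alpha\otimes\idb$, and intersecting with $1\otimes D$ singles out the summand $1\otimes\idb$. For the other composition I reduce to the case $\ida\cap D=0$ by factoring out the (automatically contained) sub-$\f$-ideal $R\otimes_{R^\f}(\ida\cap D)\subset\ida$: the quotient is $R\otimes_{R^\f}\bigl(D/(\ida\cap D)\bigr)$, still of the same form, and a direct check (using the preceding paragraph) shows that the image of $\ida$ meets $D/(\ida\cap D)$ trivially.

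Assume now $\ida\cap D=0$ and, for contradiction, pick $0\neq f\in\ida$. Fix an $R^\f$-basis $\{d_\beta\}$ of $D$; every element of $R\otimes_{R^\f}D$ then has a unique basis expansion, and I choose $f=\sum_{i=1}^n r_i\otimes d_i$ with all $r_i\neq 0$ and $n$ minimal. The set
\[
J=\{s\in R\mid \exists\, s_2,\ldots,s_n\in R,\ s\otimes d_1+\textstyle\sum_{i\geq 2}s_i\otimes d_i\in\ida\}
\]
is an ideal of $R$, and since $\f$ acts trivially on the $d_i$ it is a $\f$-ideal; it contains $r_1\neq 0$, so by $\f$-simplicity $J=R$, and I may replace $f$ by one with $r_1=1$. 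Then $\f(f)-f=\sum_{i=2}^n(\f(r_i)-r_i)\otimes d_i$ lies in $\ida$ and has length at most $n-1$, so minimality forces it to vanish; hence $r_i\in R^\f$ for $i\geq 2$, and $f=1\otimes(d_1+\sum_{i\geq 2}r_id_i)$ lies in $D$, contradicting $\ida\cap D=0$. The main obstacle is precisely this last argument: interlocking the length minimization with the closure of $J$ under $\f$ and the $\f$-simplicity of $R$; the rest of the proof is formal.
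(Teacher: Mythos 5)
Your proof is correct. Note that the paper gives no argument of its own for this lemma: it simply points to Wibmer's thesis (Prop.~1.4.15), observing that the proof there uses only the $\f$-simplicity of $R$. What you have written is the standard argument that such a citation hides, and it is complete. The two directions that need care are both handled: for $(R\otimes_{R^\f}\idb)\cap D=\idb$ you correctly use that $R^\f$ is a field (so a basis of $R$ containing $1$ exists and $R\otimes_{R^\f}-$ is exact), and for $\ida=R\otimes_{R^\f}(\ida\cap D)$ the reduction to $\ida\cap D=(0)$ is legitimate since $R\otimes_{R^\f}(\ida\cap D)\subset\ida$ and the quotient has the same form. The core minimal-length step is the classical one (it is the same mechanism as in van der Put--Singer, Lemma~1.11, and in the differential analogues): the set $J$ of leading coefficients is a $\f$-ideal precisely because $\f$ fixes the basis elements $d_i$ of $D$, $\f$-simplicity lets you normalize $r_1=1$, and minimality of $n$ forces $\f(f)-f=0$, producing a nonzero element of $\ida\cap D$. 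So your route is not different in substance from the cited source's; it is that proof, made self-contained.
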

\begin{proof}
In \cite[Prop.~1.4.15, p.~15]{Wibmer:thesis}, this is stated for the case that $R$ is a $\f$-pseudo field. However, the proof given there only uses the assumption that $R$ is $\f$-simple.
\end{proof}

\begin{lemma} \label{lemma: stays simple after constant extension}
Let $R$ be a $\f$-simple $\f$-ring and $D$ a ($\f$-constant) field extension of $R^\f$. Then $R\otimes_{R^\f}D$ is $\f$-simple.
\end{lemma}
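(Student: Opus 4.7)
The proof should be a one-line application of the previous lemma. By Lemma~\ref{lemma: correspondence for constants} (with $D$ playing the role of the constant $R^\f$-algebra), the map $\idb\mapsto R\otimes_{R^\f}\idb$ is a bijection between the set of ideals of $D$ and the set of $\f$-ideals of $R\otimes_{R^\f} D$. Since $D$ is a field, it has exactly two ideals, namely $(0)$ and $D$ itself, which correspond under this bijection to the zero ideal and to all of $R\otimes_{R^\f} D$. Hence these are the only two $\f$-ideals of $R\otimes_{R^\f} D$, which is the definition of $\f$-simplicity.

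There is essentially no obstacle here; the only thing to verify is that Lemma~\ref{lemma: correspondence for constants} applies, which requires $D$ to be an $R^\f$-algebra viewed as a constant $\f$-ring, and this is given in the hypothesis (``$\f$-constant field extension''). So the plan is simply: invoke the bijection, note that a field has only the trivial ideals, and conclude.
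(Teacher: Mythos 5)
Your proof is correct and is exactly the argument the paper intends: the paper's proof is the one-liner ``This is clear from Lemma~\ref{lemma: correspondence for constants},'' and you have simply spelled out why — the bijection sends the two ideals of the field $D$ to the only two $\f$-ideals of $R\otimes_{R^\f}D$.
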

\begin{proof}
This is clear from Lemma \ref{lemma: correspondence for constants}.
\end{proof}

\begin{lemma}{\cite[Lem.~1.1.6, p.~4]{Wibmer:thesis}} \label{lemma: linearly disjoint}
Let $K$ be a $\f$-simple $\f$-ring and $R$ a $K$-$\f$-algebra. Then $K$ and $R^\f$ are linearly disjoint over $K^\f$.
\end{lemma}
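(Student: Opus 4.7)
The plan is to show that the canonical multiplication map
\[
\mu\colon K\otimes_{K^\f} R^\f \longrightarrow R
\]
is injective, since this is precisely the statement that $K$ and $R^\f$ are linearly disjoint over $K^\f$. (Note that $R^\f$ is a $K^\f$-algebra because $K^\f$ is a field, being the $\f$-constants of a $\f$-simple $\f$-ring.)

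Regard $R^\f$ as a constant $\f$-ring; then $K\otimes_{K^\f}R^\f$ becomes a $K$-$\f$-algebra, and $\mu$ is a morphism of $\f$-rings. In particular, $\ker(\mu)$ is a $\f$-ideal of $K\otimes_{K^\f}R^\f$.

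Now I would invoke Lemma \ref{lemma: correspondence for constants} with the $\f$-simple $\f$-ring taken to be $K$ and the constant algebra taken to be $D=R^\f$. That lemma gives a bijection between $\f$-ideals of $K\otimes_{K^\f}R^\f$ and ideals of $R^\f$, with inverse given by intersection with $R^\f$. The intersection $\ker(\mu)\cap R^\f$ is the kernel of the restriction of $\mu$ to the subring $1\otimes R^\f\cong R^\f$, which is simply the inclusion $R^\f\hookrightarrow R$ and is therefore $(0)$. By the bijection of Lemma \ref{lemma: correspondence for constants}, this forces $\ker(\mu)=(0)$, as desired.

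There is essentially no obstacle here: the whole content of the lemma is packaged in Lemma \ref{lemma: correspondence for constants}, and the only thing to verify is that the kernel of $\mu$ is a $\f$-ideal whose intersection with $R^\f$ vanishes, both of which are immediate. The substantive work has already been carried out in establishing the constants--ideals correspondence.
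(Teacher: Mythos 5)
Your argument is correct: linear disjointness of $K$ and $R^\f$ over $K^\f$ inside $R$ is by definition the injectivity of the multiplication map $\mu\colon K\otimes_{K^\f}R^\f\to R$, the map $\mu$ is a morphism of $\f$-rings (since $\f(a\otimes b)=\f(a)\otimes b$ for $b\in R^\f$ and $\f(ab)=\f(a)b$ in $R$), so $\ker(\mu)$ is a $\f$-ideal of $K\otimes_{K^\f}R^\f$, and Lemma~\ref{lemma: correspondence for constants} applied to the $\f$-simple ring $K$ and the constant $K^\f$-algebra $D=R^\f$ forces $\ker(\mu)=K\otimes_{K^\f}(\ker(\mu)\cap R^\f)=K\otimes_{K^\f}(0)=(0)$, because $\mu$ restricted to $1\otimes R^\f$ is just the inclusion $R^\f\hookrightarrow R$. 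This is, however, a genuinely different route from the source: the paper simply cites \cite[Lem.~1.1.6, p.~4]{Wibmer:thesis}, where the statement is proved directly and earlier than the ideal correspondence, by the standard ``relation of minimal length'' argument (given a shortest nontrivial $R^\f$-linear relation among $K^\f$-independent elements of $K$, normalize a coefficient to $1$ using $\f$-simplicity and subtract $\f$ of the relation to shorten it). Your derivation instead packages all of that work into Lemma~\ref{lemma: correspondence for constants}, which is legitimate within the paper since that lemma is established independently (its standard proof is the analogous minimal-length argument with respect to a $K^\f$-basis of $D$ and does not presuppose linear disjointness); what you gain is brevity, at the cost of obscuring that the two lemmas are really twin consequences of the same elementary mechanism, and of inverting the logical order used in the cited source. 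Two trivial loose ends you could note: the case $R=0$ is vacuous, and your parenthetical reason that $R^\f$ is a $K^\f$-algebra should be that the structure map $K\to R$ commutes with $\f$ and hence carries $K^\f$ into $R^\f$ (that $K^\f$ is a field is what makes the tensor product and Lemma~\ref{lemma: correspondence for constants} applicable, not what furnishes the algebra structure).
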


\begin{lemma} \label{lemma:finite intersection of sigmapseudoprime ideals}
Let $R$ be a $\f$-simple $\fs$-ring that is a $\f$-pseudo domain. Then $\s$ is injective on $R$ and the zero ideal of $R$ is the finite intersection of $\s$-pseudo prime ideals. Moreover, $\Q(R)$ is naturally a \fpsf.
\end{lemma}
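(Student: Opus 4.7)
The plan is to verify the three assertions in order.

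\emph{Injectivity of $\s$.} The kernel $\ker(\s)$ is a $\f$-ideal of $R$ because $\f$ and $\s$ commute. As $R$ is $\f$-simple and $\s(1)=1$, this kernel must be $(0)$.

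\emph{Decomposition of the zero ideal.} Since $R$ is a $\f$-pseudo domain, $(0)$ is an intersection of finitely many prime ideals, so $R$ is reduced; let $M=\{\p_1,\ldots,\p_n\}$ denote its finite set of minimal primes, so that $(0)=\bigcap_i\p_i$. The core of the argument is to show that $\s^{-1}$ induces a permutation of $M$. Applying $\s^{-1}$ and using the injectivity of $\s$ just established gives $\bigcap_i\s^{-1}(\p_i)=(0)$. For each $j$, since $\p_j$ is prime and contains this intersection, $\p_j\supseteq\s^{-1}(\p_{i_j})$ for some index $i_j$; and since the prime $\s^{-1}(\p_{i_j})$ itself contains some minimal prime $\p_k\in M$, the chain $\p_k\subseteq\s^{-1}(\p_{i_j})\subseteq\p_j$ collapses by the minimality of $\p_k$ and $\p_j$, yielding $\s^{-1}(\p_{i_j})=\p_j$. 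Thus the map $\p_i\mapsto\s^{-1}(\p_i)$ is a surjection $M\to M$ on a finite set, hence a permutation. Decomposing $M$ into $\s^{-1}$-orbits $O_1,\ldots,O_r$ of respective sizes $e_1,\ldots,e_r$, and picking an index $j_k$ with $\p_{j_k}\in O_k$, one has that $\p_{j_k}$ is $\s^{e_k}$-prime and
\[
\bigcap_{\p\in O_k}\p=\p_{j_k}\cap\s^{-1}(\p_{j_k})\cap\cdots\cap\s^{-(e_k-1)}(\p_{j_k})
\]
is a $\s$-pseudo prime ideal by the equivalent description recalled in the excerpt. Hence $(0)=\bigcap_{k=1}^r\bigcap_{\p\in O_k}\p$ is a finite intersection of $\s$-pseudo prime ideals.

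\emph{Structure on $\Q(R)$.} The excerpt already records that $\Q(R)$ is naturally a $\f$-pseudo field, so I only need to extend $\s$ compatibly with $\f$. Since $R$ is reduced with finitely many minimal primes $\p_1,\ldots,\p_n$, its zero divisors are exactly $\bigcup_i\p_i$, a set stable under $\s^{-1}$ by the permutation property above. Therefore $\s$ sends non-zero divisors to non-zero divisors and extends uniquely to an endomorphism of $\Q(R)$; uniqueness forces this extension to commute with the natural one of $\f$, giving $\Q(R)$ the structure of a \fpsf.

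The main obstacle I anticipate is the permutation argument for $\s^{-1}$ on $M$; everything else then follows from standard manipulations together with the facts already in the excerpt.
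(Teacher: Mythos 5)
Your proof is correct and follows essentially the same route as the paper's: injectivity of $\s$ from the fact that $\ker\s$ is a $\f$-ideal, the permutation of the minimal primes induced by $\q\mapsto\s^{-1}(\q)$ (which the paper merely asserts and you verify in detail via the minimality chain), and the orbit decomposition producing the $\s$-pseudo prime ideals. The only cosmetic difference is the last step: the paper extends $\s$ and $\f$ to $\Q(R)$ by citing that $R$ is a finite direct sum of integral domains, whereas you argue directly that the set of zero divisors, being the union of the minimal primes, is stable under $\s^{-1}$ — both are fine.
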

\begin{proof}
Since $\f$ and $\s$ commute, the kernel of $\s$ is a $\f$-ideal. Therefore, $\s$ must be injective. Since $R$ is a $\f$-pseudo domain, the zero ideal of $R$ is a finite intersection of prime ideals. As $\s$ is injective, the map $\q\mapsto\s^{-1}(\q)$ is a permutation of the set of minimal prime ideals of $R$. Every cycle in the cycle decomposition of this permutation corresponds to a $\s$-pseudo prime ideal. Since $R$ is a finite direct sum of integral domains
(\cite[Prop.~1.1.2, p.~2]{Wibmer:thesis}), it is clear that $\s$ and $\f$ extend to $\Q(R)$.
\end{proof}

\subsection{Review of the classical \PV\ theory}\label{sec:classicalPV}
In order to maximize the applicability of our $\s$-Galois theory, we have been careful to avoid unnecessary technical conditions on the base field: 
\begin{enumerate}
\item we work in arbitrary characteristic, 
\item we do not assume that our endomorphisms are automorphisms, and 
\item we do not make any initial requirements on the constants.
\end{enumerate}
Unfortunately, the assumptions in the standard presentations of the classical Galois theory of linear difference equations (e.g. \cite{SingerPut:difference}) are somewhat more restrictive. Since, at some points in the development of our $\s$-Galois theory, we need to use the classical Galois theory, we have to give the definitions and recall the results in our slightly more general setup. This review of the classical theory will also help the reader see the analogy between the classical Galois theory and the $\s$-Galois theory.

\begin{defi} \label{defi: PVext and PV ring}
Let $K$ be a $\f$-pseudo field and $A\in\Gl_n(K)$. An extension $L|K$ of $\f$-pseudo fields with $L^\f=K^\f$ is called a \emph{Picard--Vessiot (\PV) extension} for
$\f(y)=Ay$ if there exists a matrix $Y\in\Gl_n(L)$ such that $\f(Y)=AY$ and $L=K(Y):=K(Y_{ij}|\ 1\leq i,j\leq n)$.

A $\f$-simple $K$-$\f$-algebra $R$ is called a \emph{\PV\ ring} for $\f(y)=Ay$ if there exists $Y\in\Gl_n(R)$ such that $\f(Y)=AY$ and $R=K{\left[Y,1/\det(Y)\right]}$.
\end{defi}

It is easy to describe a construction of a \PV\ ring. Indeed, let $X$ be the $n\times n$-matrix of indeterminates over $K$. We  turn $K[X,1/\det(X)]$ into a $K$-$\f$-algebra by setting $\f(X)=AX$. Then $$K[X,1/\det(X)]/\m$$ is a \PV\ ring for $\f(y)=Ay$ for every $\f$-maximal $\f$-ideal $\m$ of $K[X,1/\det(X)]$. Moreover, every \PV\ ring for $\f(y)=Ay$ is of this form.

The existence of \PV\ extensions is a more delicate issue, unless we assume that $K^\f$ is algebraically closed. The problem is that a \PV\ ring might contain new constants. The following lemma guarantees that the constants of a \PV\ ring over $K$ are an algebraic field extension of $K^\f$. 

\begin{lemma} \label{lemma: almost no new constants}
Let $K$ be a $\f$-pseudo field and $R$ a $\phi$-simple $K$-$\f$-algebra which is finitely generated as $K$-algebra. Then $R^\phi$ is an algebraic field extension of $K^\phi$.
\end{lemma}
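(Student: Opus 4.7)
The strategy is to show every element of $R^\f$ is algebraic over $K^\f$. First, $R^\f$ is a field: for any nonzero $c \in R^\f$, the principal ideal $cR$ is $\f$-stable (since $\f(c) = c$), so by $\f$-simplicity $cR = R$ and $c$ is a unit whose inverse is also $\f$-fixed. Next, I would reduce to the case where $K$ is a $\f$-field by considering the summand $e_1 R$ of the decomposition $K = e_1 K \oplus \cdots \oplus e_d K$: this summand is a finitely generated $\f^d$-simple $e_1 K$-$\f^d$-algebra, and the projection $c \mapsto e_1 c$ identifies $R^\f$ with $(e_1 R)^{\f^d}$ over the isomorphic base $K^\f \cong (e_1 K)^{\f^d}$, reducing the pseudo-field case to the field case.

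Assume now $K$ is a field. If $c \in R^\f$ is algebraic over $K$ with monic minimal polynomial $p(x) \in K[x]$, then applying $\f$ to the coefficients gives a monic polynomial $\f(p)$ of the same degree satisfying $\f(p)(c) = p(c) = 0$ (using $\f(c) = c$); uniqueness forces $\f(p) = p$, so $c$ is algebraic over $K^\f$. Thus it suffices to show every $c \in R^\f$ is algebraic over $K$. Suppose for contradiction that $c \in R^\f$ is transcendental over $K$ (equivalently over $K^\f$, by Lemma~\ref{lemma: linearly disjoint}). Then for every nonzero $p \in K^\f[x]$, the element $p(c) \in R^\f$ is nonzero, so $p(c)R$ is a nonzero $\f$-ideal of $R$, and $\f$-simplicity forces $p(c)$ to be a unit in $R$.

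The contradiction is obtained via Chevalley's theorem. The injection $K[x] \hookrightarrow R$, $x \mapsto c$, induces a finite-type morphism $\psi \colon \spec R \to \A^1_K$ whose image $Y$ is constructible. Since $R$ is reduced (its nilradical is $\f$-stable, hence zero by $\f$-simplicity), the intersection of its minimal primes $\p_i$ is zero, so at least one $\p_i$ satisfies $\p_i \cap K[x] = 0$; thus the generic point of $\A^1_K$ lies in $Y$, and by the finite/cofinite dichotomy for constructible subsets of $\A^1_K$, $Y$ must be cofinite, missing only finitely many closed points. On the other hand, $R$ is Jacobson (being finitely generated over a field), so the closed points of $Y$ coincide with the images of maximal ideals of $R$, and by the unit condition no maximal ideal of $R$ maps into $V(p)$ for any nonzero $p \in K^\f[x]$. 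But $\bigcup_{0 \neq p \in K^\f[x]} V(p)$ contains infinitely many closed points of $\A^1_K$ (namely the points $(x-k)$ for $k \in K^\f$ when $K^\f$ is infinite; otherwise, the vanishing sets of infinitely many pairwise coprime irreducible polynomials in $K^\f[x]$), contradicting the cofiniteness of $Y$. The main technical obstacle is orchestrating this Chevalley/Jacobson combination in the last step, especially verifying that closed points of the constructible image $Y$ are precisely the images of maximal ideals of $R$.
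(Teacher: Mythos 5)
Your proof is correct. The paper itself disposes of this lemma with two citations (van der Put--Singer, Lem.~1.8, and Wibmer, Prop.~2.11), and your argument is essentially the van der Put--Singer proof: reduce to the case of a base field, observe that $p(c)$ is a unit for every nonzero $p\in K^\f[x]$ when $c\in R^\f$ is transcendental, and contradict the Chevalley constructibility of the image of $\spec R\to\A^1_K$, which must contain a cofinite set of closed points because it contains the generic point. The two points where the cited statement needs "slight generalization" --- the pseudo-field base, handled by your idempotent decomposition $K=e_1K\oplus\cdots\oplus e_dK$ and the isomorphism $R^\f\cong(e_1R)^{\f^d}$, and the non-algebraically-closed constants, handled by using infinitely many pairwise coprime polynomials in $K^\f[x]$ in place of the linear factors $x-c$, $c\in K^\f$ --- are both carried out correctly; the appeal to the Jacobson property is not actually needed (any maximal ideal of $R$ containing a prime lying over a closed point of $\A^1_K$ already lies over that same closed point), but it does no harm.
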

\begin{proof}
This is a slight generalization of \cite[Lem.~1.8, p.~7]{SingerPut:difference}. It also follows from \cite[Prop.~2.11, p.~1389]{Wibmer:Chevalley}.
\end{proof}

The following proposition explains the intimate relation between \PV\ extensions and \PV\ rings:

\begin{prop} \label{prop: compare PVextension and PVring classical}
Let $K$ be a $\f$-pseudo field and $A\in\Gl_n(K)$. Let $R$ be a $K$-$\f$-algebra that is a $\f$-pseudo domain. Assume that $R=K[Y,1/\det(Y)]$ for some $Y\in\Gl_n(R)$ with $\f(Y)=AY$. Then $R$ is $\f$-simple if and only if $\Q(R)^\f$ is algebraic over $K^\f$.
\end{prop}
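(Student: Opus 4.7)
For the forward direction, the plan is to chain Lemmas~\ref{lemma: almost no new constants} and~\ref{lemma: constants of fsimple ring}: if $R$ is $\f$-simple, then since $R$ is finitely generated as a $K$-algebra, the former gives $R^\f$ algebraic over $K^\f$, and the latter gives $\Q(R)^\f=R^\f$, so $\Q(R)^\f$ is algebraic over $K^\f$.

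For the converse I will argue by contradiction. Assume $\Q(R)^\f/K^\f$ is algebraic but $R$ is not $\f$-simple. Since $R=K[Y,1/\det(Y)]$ is Noetherian, there exists a non-zero $\f$-maximal $\f$-ideal $\m$, and $R':=R/\m$ is a $\f$-simple $K$-$\f$-algebra of the form $K[\bar Y,1/\det(\bar Y)]$ with $\f(\bar Y)=A\bar Y$. Thus $R'$ is a \PV\ ring and, by Lemma~\ref{lemma: almost no new constants}, $N:=(R')^\f$ is algebraic over $K^\f$. Set $L:=\Q(R)$ and $M:=L^\f$; by hypothesis $M/K^\f$ is algebraic. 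Since $R\hookrightarrow L$ and $\m\neq 0$, the non-zero $\f$-ideal $\m L$ of the $\f$-simple ring $L$ equals $L$, so $\m$ contains a non-zero-divisor $s$ of $R$---an element that dies in $R'$ but is a unit in $L$.

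The central tool will be the matrix of $\f$-invariants $Z:=(1\otimes\bar Y)^{-1}(Y\otimes 1)\in\Gl_n(R\otimes_K R')$: a direct computation using $\f(Y)=AY$ and $A\in\Gl_n(K)$ gives $\f(Z)=Z$. Since $Y\otimes 1=(1\otimes\bar Y)Z$, the ring $T:=R\otimes_K R'$ is generated as an $R'$-algebra by the entries of $Z$ and $1/\det(Z)$, so $T=R'[Z,1/\det(Z)]$. Applying Lemma~\ref{lemma: linearly disjoint} to the $\f$-simple ring $R'$ (giving linear disjointness of $R'$ and $T^\f$ over $N$) together with faithful flatness of $R'$ over $N$ (a finite direct sum of fields extending $N$), I expect to obtain $T^\f=N[Z,1/\det(Z)]$ and $T=R'\otimes_N T^\f$. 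Lemma~\ref{lemma: correspondence for constants} then translates $\f$-ideals of $T$ into ideals of $T^\f$, and the proper $\f$-ideal $\m\otimes R'$ corresponds to a proper ideal $\idb\subset N[Z,1/\det(Z)]$. The same analysis applied to $L\otimes_K R'$ (with $L$ as the $\f$-simple base) yields $(L\otimes_K R')^\f=M[Z,1/\det(Z)]$, with the same invariant matrix $Z$.

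The contradiction should come from the flat inclusion $T\hookrightarrow L\otimes_K R'$ (flat because $R'$ is $K$-flat, $K$ being a product of fields). Since $s\in\m$ is a unit in $L$, the extension of $\m\otimes R'$ to $L\otimes_K R'$ is the whole ring, which via the two ideal correspondences forces $\idb$ to generate the unit ideal of $M[Z,1/\det(Z)]$. After base-changing $R'$ to $R'\otimes_N E$ (which remains $\f$-simple by Lemma~\ref{lemma: stays simple after constant extension}) for a common algebraic extension $E\supset N,M$ inside $\overline{K^\f}$, the map $N[Z,1/\det(Z)]\hookrightarrow M[Z,1/\det(Z)]$ becomes a faithfully flat base change, and a faithfully flat ring map cannot carry a proper ideal to the unit ideal---a contradiction. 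The main obstacle I anticipate is aligning the two different descriptions of the $\f$-invariants via the ``same'' matrix $Z$ after the constant-field extension; this is precisely where Lemma~\ref{lemma: stays simple after constant extension} is needed, to ensure that the relevant tensor products remain $\f$-simple and that the identifications $T^\f=N[Z,1/\det(Z)]$ and $(L\otimes_K R')^\f=M[Z,1/\det(Z)]$ survive the base change intact.
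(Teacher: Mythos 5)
Your forward direction is exactly the paper's. Your converse, however, is organized quite differently: the paper fixes an auxiliary \PV\ ring $R'$, uses the Nullstellensatz to specialize the constant matrix $Z$ into an algebraic extension $C$ of $K^\f$, and thereby identifies $R\otimes_{K^\f}C$ with $R'\otimes_{R'^\f}C$ so that $\f$-simplicity transfers; you instead argue by contradiction with $R'=R/\m$ and push the ideal $\m\otimes R'$ through the constants correspondence on $T=R\otimes_KR'$ and on $L\otimes_KR'$. Everything up to the conclusion that $\idb$ is proper in $N[Z,1/\det(Z)]=T^\f$ while generating the unit ideal of $M[Z,1/\det(Z)]=(L\otimes_KR')^\f$ is correct (modulo writing out that these rings of constants really are generated by $Z$, which does follow from Lemma~\ref{lemma: linearly disjoint} together with freeness of a constant algebra over the field of constants).

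The gap is the closing step. The map $N[Z,1/\det(Z)]\to M[Z,1/\det(Z)]$ is the map induced on $\f$-constants by a localization $T\hookrightarrow S^{-1}T$, and such maps are in general neither flat nor surjective on spectra (new constants appear as fractions), so a proper ideal can perfectly well extend to the unit ideal. Your base change to $E$ does not repair this: inside $(L\otimes_KR'\otimes_NE)^\f$ the copy of $M$ coming from $L$ and the copy of $M$ sitting inside $E\subset R'\otimes_NE$ are distinct elements (just as $i\otimes 1\neq 1\otimes i$ in $\QQ(i)\otimes_{\QQ}\QQ(i)$), so $M[Z,1/\det(Z)]$ is only the compositum $M\cdot N[Z,1/\det(Z)]$, i.e.\ a quotient of $M\otimes_{K^\f}N[Z,1/\det(Z)]$ by an uncontrolled ideal --- not a faithfully flat base change of $N[Z,1/\det(Z)]$. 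Identifying it with $N[Z,1/\det(Z)]\otimes_NM$ would require showing that $Z$ acquires no new relations over $M$, which is essentially the torsor statement the proposition encodes. The argument can be rescued, but by a different mechanism: since $M=\Q(R)^\f$ is algebraic over $K^\f$ \emph{by hypothesis}, every element of $M$ is integral over $K^\f\subseteq N[Z,1/\det(Z)]$, so $M\cdot N[Z,1/\det(Z)]$ is an injective integral extension of $N[Z,1/\det(Z)]$, and lying over (the same \cite[Prop.~4.15, p.~129]{eisenbud:view} the paper invokes for $\widetilde K[Y,1/\det(Y)]$ over $R$) shows a proper ideal cannot generate the unit ideal. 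Note that this is the only place the algebraicity hypothesis can enter; your flatness argument never uses it, which is a sign it cannot be right as stated.
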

\begin{proof}
 It is clear from Lemmas \ref{lemma: almost no new constants} and~\ref{lemma: constants of fsimple ring} that $\Q(R)^\f$ is algebraic over $K^\f$ if $R$ is $\f$-simple.
So, we assume that $\Q(R)^\f$ is algebraic over $K^\f$. Indeed, we will first assume that $\Q(R)^\f=K^\f$.
Let $$R'=K[Y',1/\det(Y')]$$ be a \PV\ ring for $\f(y)=Ay$ where $Y'\in\Gl_n(R')$ satisfies $\f(Y')=AY'$. Note that $L:=\Q(R)$ is a $\f$-pseudo field.
The matrix $$\quad Z:=(Y^{-1}\otimes 1) \cdot (1\otimes Y')\in\Gl_n(L\otimes_K R')$$ satisfies $\s(Z)=(AY)^{-1}AY'=Z$. It follows from Lemma \ref{lemma: linearly disjoint} that 
\begin{equation} \label{eq: isom}
L\otimes_KR'=L\otimes_{K^\f}K^\f[Z,1/\det(Z)].\end{equation}
Since $K^\f[Z,1/\det(Z)]$ is finitely generated as $K^\f$-algebra, there exists an algebraic field extension $C$ of $K^\f$ and a $K^\f$-morphism $$\psi\colon K^\f[Z,1/\det(Z)]\to C.$$ Composing the inclusion $R'\to L\otimes_K R'$ with (\ref{eq: isom}) and $\id\otimes \psi$, we obtain a $K$-$\f$-morphism $R'\to L\otimes_{K^\f}C$. Since $R'$ is $\f$-simple, we can identify $R'$ with a subring of $L\otimes_{K^\f}C$.
The two solution matrices $Y$ and $Y'$ in $\Gl_n(L\otimes_{K^\f}C)$ only differ by multiplication by an invertible matrix  
with entries in $C$. Therefore,
\[R\otimes_{K^\f}C=K[Y,1/\det(Y)]\otimes_{K^\f} C=K[Y,1/\det(Y), C]=K[Y', 1/\det(Y'),C]=R'\otimes_{R'^\f}C,\]
by Lemma \ref{lemma: linearly disjoint} again. From Lemma \ref{lemma: stays simple after constant extension}, we know that $R'\otimes_{R'^\f}C$ is $\f$-simple. This implies that $R$ is $\f$-simple, because a non-trivial $\f$-ideal of $R$ would give rise to a non trivial $\f$-ideal of $R\otimes_{K^\f}C$. 

In the general case, we set $\widetilde{K}=K\otimes_{K^\f}L^\f\subset L$.
We claim that $\widetilde{K}$ is a $\f$-pseudo field. We already know from Lemma \ref{lemma: stays simple after constant extension} that $\widetilde{K}$ is $\f$-simple and, since  $L$ is a $\f$-pseudo domain, $\widetilde{K}$ is also a $\f$-pseudo domain. Then $\widetilde{K}$ is a finite direct sum of integral domains $R_i$ \cite[Prop.~1.1.2, p.~2]{Wibmer:thesis}. Since $L^\f$ is algebraic over $K^\f$,  $\widetilde{K}$ is integral over $K$. As $K$ is a direct sum of fields $K_j$, this implies that each $R_i$ is integral over some $K_j$. But, since $R_i$ is an integral domain and $K_j$ a field, $R_i$ must be a field. So, $\widetilde{K}$ is a finite direct sum of fields. Consequently, $\widetilde{K}$ is a $\f$-pseudo field.

From the first part of the proof, it follows that $\widetilde{K}[Y,1/\det(Y)]$ is $\f$-simple.
We have to show that $R=K[Y, 1/\det(Y)]$ is $\f$-simple. Suppose that $\ida\subset R$ is a non-trivial $\f$-ideal of $R$. Since $L^\f$ is algebraic over $K^\f$,  $\widetilde{K}[Y,1/\det(Y)]$ is integral over $R$. Therefore, the ideal $\ida'$ of $\widetilde{K}[Y,1/\det(Y)]$ generated by $\ida$ does not contain $1$ \cite[Prop.~4.15, p. 129]{eisenbud:view}. As $\ida'$ is a $\f$-ideal, this yields a contradiction.
\end{proof}

\begin{cor} \label{cor: relation between classical PV-rings and PV-ext}
Let $K$ be a $\f$-pseudo field and $A\in\Gl_n(K)$. If $L|K$ is a \PV\ extension for $\f(y)=Ay$ with fundamental solution matrix $Y\in\Gl_n(L)$, then
$K{\left[Y,1/\det(Y)\right]}$ is a \PV\ ring for $\f(y)=Ay$. Conversely, if $R$ is a \PV\ ring with $R^\phi=K^\phi$, then $\Q(R)$ is a \PV\ extension for $\f(y)=Ay$.
\end{cor}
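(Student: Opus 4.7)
The plan is to derive both implications from Proposition~\ref{prop: compare PVextension and PVring classical} by correctly identifying $\Q(R)$ in each direction.

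For the forward direction, set $R := K[Y,1/\det(Y)] \subset L$. The inclusion $R \hookrightarrow L$ into the $\phi$-pseudo field $L$ exhibits the zero ideal of $R$ as the kernel of a $\phi$-morphism into a $\phi$-pseudo field, so $R$ is a $\phi$-pseudo domain. Every non-zero divisor of $R$ is then a unit in $L$: if $r \in R$ were not a unit in $L$ then, $L$ being a $\phi$-pseudo field, $r\ell = 0$ for some $\ell \in L\setminus\{0\}$, and writing $\ell = a/b$ with $a,b \in K[Y_{ij}] \subset R$ and $b \in L^\times$ (via the explicit description of $L = K(Y)$) gives $ra = 0$ in $R$ with $a \neq 0$, contradicting $r$ being a non-zero divisor. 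Hence $\Q(R) \hookrightarrow L$, so $\Q(R)^\phi \subset L^\phi = K^\phi$, and $\Q(R)^\phi = K^\phi$ is trivially algebraic over $K^\phi$. Proposition~\ref{prop: compare PVextension and PVring classical} now forces $R$ to be $\phi$-simple, i.e., a PV ring.

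For the converse, let $R = K[Y,1/\det(Y)]$ be a PV ring with $R^\phi = K^\phi$. The crucial step is showing $R$ is a $\phi$-pseudo domain. Using the structure $K = f_1K \oplus\cdots\oplus f_sK$ of the $\phi$-pseudo field $K$, one decomposes $R = \bigoplus_i f_iR$ and verifies that $f_1R$ is a $\phi^s$-simple, finitely generated PV ring for $\phi^s(y) = (\phi^{s-1}(A)\cdots A)y$ over the $\phi^s$-field $f_1K$, with $(f_1R)^{\phi^s} \cong R^\phi = K^\phi \cong (f_1K)^{\phi^s}$. The classical structural result for PV rings over $\phi$-fields in \cite{SingerPut:difference} then decomposes $f_1R$ as a finite direct sum of integral domains cyclically permuted by $\phi^s$; combining this with the cyclic action of $\phi$ between the blocks $f_iR$ produces a cyclic permutation of integral-domain summands of $R$, whence the zero ideal of $R$ is $\phi$-pseudo prime. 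Consequently $L := \Q(R)$ is a $\phi$-pseudo field, and Lemma~\ref{lemma: constants of fsimple ring} gives $L^\phi = R^\phi = K^\phi$. Finally, $L = K(Y)$ follows by clearing denominators: any $r/s \in L$ with $r \in R$ and $s$ a non-zero divisor of $R$ can, after multiplying both $r$ and $s$ by a suitable power of $\det(Y)$, be rewritten as $a/b$ with $a,b \in K[Y_{ij}]$ and $b \in L^\times$, matching the explicit form of elements of $K(Y)$.

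The main obstacle is the structural step in the converse, namely that a PV ring is a $\phi$-pseudo domain; reducing via the idempotents of $K$ to the classical $\phi$-field case and invoking the Singer--van der Put decomposition is what does the work. The remainder of the proof is essentially formal manipulation of localizations, $\phi$-constants, and the definition of $K(Y)$.
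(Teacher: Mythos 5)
Your argument is correct, and the forward direction is exactly the paper's: embed $R=K[Y,1/\det(Y)]$ into the $\f$-pseudo field $L$ to see that $R$ is a $\f$-pseudo domain with $\Q(R)^\f\subset L^\f=K^\f$, then invoke Proposition~\ref{prop: compare PVextension and PVring classical}. Where you diverge is the converse. The paper gets the key fact --- that a \PV\ ring is a $\f$-pseudo domain, so that $\Q(R)$ is a $\f$-pseudo field --- for free from the general statement recorded in Remark~\ref{rem:subtle}: a Noetherian $\f$-simple $\f$-ring is automatically a $\f$-pseudo domain (\cite[Prop.~1.1.2]{Wibmer:thesis}); after that, only Lemma~\ref{lemma: constants of fsimple ring} is needed for $\Q(R)^\f=R^\f=K^\f$. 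You instead re-derive this by splitting $K$ along its idempotents, passing to the $\f^s$-field $f_1K$, and quoting the Singer--van der Put decomposition of a \PV\ ring into cyclically permuted domains. That route works, but it is heavier and has two soft spots worth noting. First, the decomposition in \cite{SingerPut:difference} is proved under their standing hypotheses (characteristic zero, algebraically closed constants, $\f$ an automorphism), which this paper deliberately avoids; what you actually need is the hypothesis-free statement that a Noetherian $\f$-simple ring is a finite direct sum of domains permuted in a \emph{single} cycle, which is precisely \cite[Prop.~1.1.2]{Wibmer:thesis}. Second, your final step ``combining this with the cyclic action of $\f$ between the blocks produces a cyclic permutation'' is asserted rather than argued: you must rule out the permutation of all the domain summands of $R$ breaking into several $\f$-orbits, since otherwise $(0)$ would only be an intersection of several $\f$-pseudo prime ideals and not itself $\f$-pseudo prime. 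This does follow --- the idempotents supported on a single orbit would generate a proper non-zero $\f$-ideal, contradicting $\f$-simplicity of $R$ --- but that is exactly the argument behind the cited proposition, so the detour through $f_1R$ buys nothing. The remaining bookkeeping ($\Q(R)=K(Y)$ by clearing denominators, constants via Lemma~\ref{lemma: constants of fsimple ring}) is fine.
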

\begin{proof}
This is clear from Proposition \ref{prop: compare PVextension and PVring classical} and Lemma \ref{lemma: constants of fsimple ring}.
\end{proof}

\begin{theo} \label{theo: uniqueness classical}
Let $K$ be a $\f$-pseudo field. Let $R_1$ and $R_2$ be two \PV\ rings for the same equation $\phi(y)=Ay$, $A\in\Gl_n(K)$. Then there exists a finite algebraic field extension $\widetilde{k}$ of $k:=K^\phi$, containing $k_1:=R_1^\phi$ and $k_2:=R_2^\phi$ and an isomorphism
\[R_1\otimes_{k_1}\widetilde{k}\simeq R_2\otimes_{k_2}\widetilde{k}\]
of $K\otimes_k\widetilde{k}$-$\f$-algebras.
\end{theo}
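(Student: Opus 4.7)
The strategy is to amalgamate $R_1$ and $R_2$ inside a single $\f$-simple $K$-$\f$-algebra $R_3$ whose ring of $\f$-constants will serve as $\widetilde{k}$. Concretely, I would form $T:=R_1\otimes_K R_2$, which is a finitely generated, hence Noetherian, $K$-algebra, and pick a $\f$-maximal $\f$-ideal $\m$ of $T$; then $R_3:=T/\m$ is $\f$-simple. Since $R_1$ and $R_2$ are themselves $\f$-simple, the composites $R_i\hookrightarrow T\twoheadrightarrow R_3$ are injective, so I view $R_1,R_2\subset R_3$. Writing $Y_i\in\Gl_n(R_i)$ for the fundamental solution matrices, the matrix $Z:=Y_1^{-1}Y_2\in\Gl_n(R_3)$ satisfies $\f(Z)=Z$, so its entries lie in $k_3:=R_3^\f$; substituting $Y_2=Y_1Z$ and $\det(Y_2)=\det(Y_1)\det(Z)$ then shows $R_3=R_1\cdot k_3$, and symmetrically $R_3=R_2\cdot k_3$.

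By Lemma~\ref{lemma: almost no new constants} applied to the finitely generated $\f$-simple $K$-$\f$-algebra $R_3$, $k_3$ is an algebraic field extension of $k$; in particular it contains both $k_1=R_1^\f$ and $k_2=R_2^\f$. Combining $R_3=R_1\cdot k_3$ with the linear disjointness of $R_1$ and $k_3$ over $k_1$ provided by Lemma~\ref{lemma: linearly disjoint} gives $R_3\simeq R_1\otimes_{k_1}k_3$, and likewise $R_3\simeq R_2\otimes_{k_2}k_3$. Setting $\widetilde{k}:=k_3$, this already delivers the required isomorphism $R_1\otimes_{k_1}\widetilde{k}\simeq R_2\otimes_{k_2}\widetilde{k}$ of $K\otimes_k\widetilde{k}$-$\f$-algebras, since the maps above are $K$-morphisms sending $\widetilde{k}$ identically onto itself.

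The main obstacle is to verify that $\widetilde{k}$ is \emph{finite} over $k$, not merely algebraic. I would let $k_3'\subset k_3$ be the $k_1$-subalgebra generated by the (finitely many) entries of $Z$; being finitely generated and algebraic over $k_1$ and a domain (sitting inside the field $k_3$), it is itself a finite field extension of $k_1$, so in particular it contains $1/\det(Z)$. Hence $R_3=R_1\cdot k_3'$, and linear disjointness upgrades this to $R_1\otimes_{k_1}k_3'=R_3=R_1\otimes_{k_1}k_3$. Tensoring the short exact sequence $0\to k_3'\to k_3\to k_3/k_3'\to 0$ with the nonzero flat $k_1$-module $R_1$ then forces $k_3/k_3'=0$, i.e., $k_3=k_3'$, so $\widetilde{k}$ is finite over $k_1$ and, using that Lemma~\ref{lemma: almost no new constants} also makes $k_1/k$ finite, finite over $k$ as required.
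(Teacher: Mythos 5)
Your construction is the standard one that the paper's proof has in mind when it says ``straightforward generalization of \cite[Prop.~1.9, p.~7]{SingerPut:difference}'': pass to a $\f$-simple quotient $R_3$ of $R_1\otimes_KR_2$, note that both $R_i$ embed into $R_3$ because they are $\f$-simple, observe via Remark~\ref{rem: fundamental solution matrix} that $Z=Y_1^{-1}Y_2$ has entries in $k_3:=R_3^\f$, and use Lemma~\ref{lemma: linearly disjoint} to identify $R_3$ with $R_1\otimes_{k_1}k_3$ and with $R_2\otimes_{k_2}k_3$. All of this is correct, as is your argument that $k_3=k_1[Z_{ij}]$ is a \emph{finite} extension of $k_1$ (the subalgebra generated by the entries of $Z$ is a domain integral over $k_1$ and finitely generated, hence a finite field extension; flatness of $R_1$ over $k_1$ then forces it to be all of $k_3$).

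There is, however, one genuine gap at the very end: you assert that Lemma~\ref{lemma: almost no new constants} ``also makes $k_1/k$ finite.'' It does not --- that lemma only yields that $k_1=R_1^\f$ is an \emph{algebraic} field extension of $k$, and with only that your $\widetilde{k}=k_3$ is finite over $k_1$ but merely algebraic over $k$, which is strictly weaker than the theorem's conclusion. Since any admissible $\widetilde{k}$ must contain $k_1$, the finiteness of $k_1$ over $k$ is genuinely part of what the theorem asserts and must be proved; it is not a formal consequence of algebraicity. It can be closed, for instance, as follows: $\Q(R_1)$ is a finite direct sum of fields, each finitely generated as a field extension of a field component of $K$, so the relative algebraic closure $F$ of that component in the corresponding summand is finite over it, say of degree $N$; combining the embedding $k_1\hookrightarrow F$ (via multiplication by the relevant idempotent) with the linear disjointness of $K$ and $R_1^\f$ over $k$ from Lemma~\ref{lemma: linearly disjoint}, one bounds $[k(\alpha_1,\ldots,\alpha_m):k]$ by $N$ uniformly over all finite tuples $\alpha_1,\ldots,\alpha_m\in k_1$, whence $[k_1:k]\le N$. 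Once this (and the same statement for $k_2$) is in place, your proof is complete.
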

\begin{proof}
This is a straightforward generalization of \cite[Prop. 1.9, p. 7]{SingerPut:difference}.
\end{proof}

Of course, the above result immediately gives the uniqueness (up to $K$-$\f$-isomorphisms) of \PV\ extensions provided that $K^\f$ is algebraically closed.

\subsection{$\sigma$-\PV\ extensions and $\sigma$-\PV\ rings}\label{sec:sPVdef}
In this section, we define $\sigma$-\PV\ extensions and $\sigma$-\PV\ rings and clarify the relation between them.
Let $K$ be a \fpsf. We  study a linear difference equation
\[\f(y)=Ay, \quad \text{ where } A\in\Gl_n(K). \]
We are mainly interested in the case when $K$ is a field. Typically, $K$ will be one of the $\fs$-fields from Example~\ref{ex:fsfields}. However, for consistency reasons, we will give all definitions over a general \fpsf.

If $R$ is a $K$-$\fs$-algebra, then a matrix $Y\in\Gl_n(R)$ is called a \emph{fundamental solution matrix for $\phi(y)=Ay$} if $\f(Y)=AY$.

\begin{rem} \label{rem: fundamental solution matrix}
If $Y_1,Y_2\in\Gl_n(R)$ are two fundamental solution matrices for $\phi(y)=Ay$, then there exists a matrix $C\in\Gl_n(R^\f)$ such that $Y_2=Y_1C$.
\end{rem}
\begin{proof}
This follows from the well-known computation $\f\big(Y_1^{-1}Y_2\big)=(AY_1)^{-1}AY_2=Y_1^{-1}Y_2$.
\end{proof}

Let $L$ be a \fpsf extension of $K$ and $Y\in\Gl_n(L)$ a fundamental solution matrix for $\phi(y)=Ay$.
If $L=K\langle Y\rangle_\s$, we say that $L$ is $\s$-generated by $Y$.

\begin{defi}
Let $K$ be a $\f$-pseudo $\s$-field and $A\in\Gl_n(K)$. A \fpsf extension $L$ of $K$ is called a \emph{$\s$-\PV\ extension} (or $\s$-parameterized \PV\ extension in case we need to be more precise) for $\phi(y)=Ay$ if $L^\f=K^\f$ and $L$ is $\s$-generated by a fundamental solution matrix for $\phi(y)=Ay$.

A $K$-$\fs$-algebra $R$ that is a $\f$-pseudo $\s$-domain is called a \emph{$\s$-\PV\ ring} for $\f(y)=Ay$ if $R$ is $\f$-simple and $\s$-generated by a fundamental solution matrix for $\f(y)=Ay$, i.e, $R=K{\left\{Y,1/\det(Y)\right\}}_\s$ for some fundamental solution matrix $Y\in\Gl_n(R)$.
\end{defi}

\begin{rem}\label{rem:subtle}
A Noetherian $\f$-simple $\f$-ring is automatically a $\f$-pseudo domain \cite[Prop.~1.1.2, p.~2]{Wibmer:thesis}. This is why the condition that $R$ should be a $\f$-pseudo domain does not appear in the definition of classical \PV\ rings (Definition \ref{defi: PVext and PV ring}). Here, in the $\s$-parameterized setting, one of the more subtle steps in the existence proof of $\s$-\PV\ rings (or extensions) is to verify the $\f$-pseudo domain property (cf. Corollary~\ref{cor: bound period}.) 
\end{rem}

By a \emph{$\s$-\PV\ extension $L|K$}, we mean a \fpsf extension $L$ of $K$ that is a $\s$-\PV\ extension for some linear $\f$-equation $\f(y)=Ay$, with $A\in\Gl_n(K)$. Similarly for $\s$-\PV\ rings. The $\s$-field of $\phi$-constants of a $\s$-\PV\ extension $L|K$ will usually be denoted by $k$, that is,
\[k:=K^\phi=L^\phi.\]

To clarify the relation between $\s$-\PV\ extensions and $\s$-\PV\ rings, we will use the following important observation.

\begin{lemma} \label{lemma:higherorder}
Let $L|K$ be a $\s$-\PV\ extension for $\f(y)=Ay$ with fundamental solution matrix $Y\in\Gl_n(L)$. Set
\[L_d=K{\big(Y,\s(Y),\ldots,\s^d(Y)\big)}\subset L, \quad d\geq 0.\]
Then $L_d|K$ is a \PV\ extension for the $\f$-linear system $\f(y)=A_dy$, where
\[A_d=\left(\begin{array}{cccc}
 A & 0 & \cdots & 0 \\
 0 & \s(A) & \cdots & 0\\
\vdots & & \ddots & \vdots  \\
0 & \cdots & 0 &\s^d(A)
\end{array}\right)\in\Gl_{n(d+1)}(K).
\]
\end{lemma}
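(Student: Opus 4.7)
The plan is to verify the conditions of Definition~\ref{defi: PVext and PV ring} using the block-diagonal matrix $Z = \operatorname{diag}(Y,\sigma(Y),\ldots,\sigma^d(Y))\in\Gl_{n(d+1)}(L)$. Since $\f$ and $\s$ commute, $\f(\s^j(Y))=\s^j(A)\s^j(Y)$, so a direct block-by-block calculation gives $\f(Z)=A_dZ$. By the very definition of $L_d$, one has $L_d=K(Z)$; and because $L_d\subseteq L$, the constants satisfy $L_d^\f\subseteq L^\f=K^\f$, hence $L_d^\f=K^\f$. Thus everything reduces to proving that $L_d$ is itself a $\f$-pseudo field.

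To this end I would introduce
\[
R_d := K[Z,1/\!\det(Z)] = K\bigl[Y,\s(Y),\ldots,\s^d(Y),1/\!\det(Y),\ldots,1/\!\det(\s^d(Y))\bigr]\subseteq L,
\]
which is a $\f$-stable subring. Since the inclusion $R_d\hookrightarrow L$ is a $\f$-morphism into the $\f$-pseudo field $L$, the zero ideal of $R_d$ is $\f$-pseudo prime and $R_d$ is a $\f$-pseudo domain. Therefore, by the fact recalled just before Definition~\ref{defi: fpseudofield}, the total quotient ring $\Q(R_d)$ is canonically a $\f$-pseudo field. The goal is to show that the natural $\f$-equivariant map $\Q(R_d)\to L$ is well-defined, injective, and has image exactly $L_d$, which will yield $L_d\cong\Q(R_d)$ and finish the proof.

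Injectivity follows from $\f$-simplicity of $\Q(R_d)$ once well-definedness is established, and identifying the image with $L_d=\{a/b : a,b\in K[Z],\ b\in L^\times\}$ is a routine manipulation: writing each element of $R_d$ as $a/(\det Z)^m$ with $a\in K[Z]$ and absorbing powers of $\det(Z)$ into numerator or denominator shows $\{ab^{-1} : a,b\in R_d,\, b\in L^\times\}=L_d$. The delicate step---and the main obstacle---is the claim:
\[
\text{every non-zero divisor of }R_d\text{ is a unit in }L.
\]
To prove it I would use the structural decomposition $L=e_1L\oplus\cdots\oplus e_pL$ of $L$ as a $\f$-pseudo field, with each $L_i=e_iL$ a field and $\f$ cyclically permuting the $e_i$. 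Set $\p_i:=R_d\cap\ker(L\twoheadrightarrow L_i)$. Since $R_d\hookrightarrow\prod_i L_i$, one has $\bigcap_i\p_i=0$, so each minimal prime of $R_d$ must equal some $\p_i$. Next, $\f$ is injective on $L$ (its kernel would be a $\f$-ideal of the $\f$-simple ring $L$), and using $\f(x)_j=\f(x_{j-1})$ one checks that $\f^{-1}(\p_j)\cap R_d=\p_{j-1}$; thus the $\p_i$'s form a single orbit under the $\f$-action on $\operatorname{Spec}(R_d)$. Since the set of minimal primes of $R_d$ is $\f$-invariant and nonempty, it must be the whole orbit, i.e.\ every $\p_i$ is a minimal prime. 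Consequently $x\in R_d$ is a non-zero divisor if and only if $x\notin\p_i$ for all $i$, equivalently the $i$-th component of $x$ in $L_i$ is nonzero for every $i$, equivalently $x\in L^\times$. This completes the verification that $L_d\cong\Q(R_d)$ is a $\f$-pseudo field, and hence that $L_d|K$ is a \PV\ extension for $\f(y)=A_dy$ with fundamental solution matrix $Z$.
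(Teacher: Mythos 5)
Your proof is correct, and its skeleton is identical to the paper's: the block-diagonal matrix $\operatorname{diag}(Y,\s(Y),\ldots,\s^d(Y))$ is verified to be a fundamental solution matrix for $\f(y)=A_dy$ via $\f(\s^i(Y))=\s^i(A)\s^i(Y)$, and the constants condition follows from $L_d^\f\subset L^\f=K^\f$. The only divergence is how the one non-trivial point --- that $L_d$ is a $\f$-pseudo field --- is handled: the paper simply observes that $K[Y,\ldots,\s^d(Y)]$ is a $\f$-subring of $L$ and cites Lemma 1.3.4 of Wibmer's thesis, whereas you reprove that fact from scratch. Your replacement argument is sound: $R_d$ is a $\f$-pseudo domain because its zero ideal is the kernel of a morphism into the $\f$-pseudo field $L$; the primes $\p_i=R_d\cap\ker(L\to e_iL)$ intersect to zero, form a single orbit under $\q\mapsto\f^{-1}(\q)$, and since that map permutes the (finitely many) minimal primes of the reduced ring $R_d$, every $\p_i$ is minimal; hence the non-zero divisors of $R_d$ are exactly the elements with all components nonzero, i.e.\ the units of $L$, giving $L_d\cong\Q(R_d)$. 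This buys self-containedness at the cost of length; the paper's citation buys brevity. Both are complete proofs.
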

\begin{proof}
Note that $K{\left[Y,\s(Y),\ldots,\s^d(Y)\right]}$ is a $\f$-subring of $L$. Therefore, $K{\left(Y,\s(Y),\ldots,\s^d(Y)\right)}$ is a $\f$-pseudo field by \cite[Lem.~1.3.4, p.~9]{Wibmer:thesis}. Applying $\s^i$ to $\f(Y)=AY$ for $i=0,\ldots,d$ yields $\f{\left(\s^i(Y)\right)}=\s^i(A)\s^i(Y).$ Therefore,
\[Y_d=\left(\begin{array}{cccc}
 Y & 0 & \cdots & 0 \\
 0 & \s(Y) & \cdots & 0\\
\vdots & & \ddots & \vdots  \\
0 & \cdots & 0 &\s^d(Y)
\end{array}\right)\in\Gl_{n(d+1)}(L_d).
\]
is a fundamental solution matrix for $\f(y)=A_dy$. Since $L_d^\f\subset L^\f=K^\f$,  $L_d|K$ is a \PV\ extension for $\f(y)=A_dy$.
\end{proof}
The following proposition is the $\s$-analogue of Corollary~\ref{cor: relation between classical PV-rings and PV-ext}.

\begin{prop} \label{prop: compare sigmaPVring simgaPVextension} 
Let $K$ be a \fpsf and $A\in\Gl_n(K)$.
\begin{enumerate}
\item If $L|K$ is a $\s$-\PV\ extension for $\f(y)=Ay$ with fundamental solution matrix $Y\in\Gl_n(L)$, then\\
$R:=K\{Y,1/\det(Y)\}_\s\subset L$ is a $\s$-\PV\ ring for $\f(y)=Ay$.
\item
Conversely, if $R$ is a $\s$-\PV\ ring for $\f(y)=Ay$ with $R^\f=K^\f$, then $\Q(R)$ is a $\s$-\PV\ extension for $\f(y)=Ay$.
\end{enumerate}
\end{prop}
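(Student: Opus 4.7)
The proof breaks naturally into the two parts of the statement; in both directions, the work is to transport structure between $R$ and $\Q(R)$ using only formal properties plus the already-established classical theory from \S\ref{sec:classicalPV}.

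For part (i), we must verify that $R := K\{Y,1/\det(Y)\}_\s \subset L$ is (a) a $K$-$\fs$-subalgebra, (b) a $\f$-pseudo $\s$-domain, (c) $\f$-simple, and (d) $\s$-generated by a fundamental solution matrix. Items (a) and (d) are immediate from $\f(Y) = AY \in R^{n \times n}$ and the definition of $R$. Item (b) is also short: $R$ sits inside the $\fpsf$ $L$, so the zero ideal of $R$ is the kernel of the inclusion $R \hookrightarrow L$ and is therefore $\f$-pseudo prime by definition. The crux is (c). I will write
\[R_d := K{\left[Y,\s(Y),\ldots,\s^d(Y),\tfrac{1}{\det(Y)},\ldots,\tfrac{1}{\det(\s^d(Y))}\right]} \subset R,\]
so that $R = \bigcup_{d\geq 0} R_d$ as a directed union of $\f$-subrings. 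By Lemma~\ref{lemma:higherorder} the block-diagonal matrix $Y_d$ is a fundamental solution matrix for the classical \PV\ extension $L_d | K$ associated to $\f(y)=A_d y$, so Corollary~\ref{cor: relation between classical PV-rings and PV-ext} identifies $R_d = K[Y_d,1/\det(Y_d)]$ as a classical \PV\ ring, hence $\f$-simple. A $\f$-ideal $\ida$ of $R$ then meets each $R_d$ in a $\f$-ideal, which is either $0$ or $R_d$; if the latter occurs for some $d$ then $1 \in \ida$ and $\ida = R$, otherwise $\ida = \bigcup_d (\ida \cap R_d) = 0$. Thus $R$ is $\f$-simple.

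For part (ii), I will equip $\Q(R)$ with its natural $\fs$-structure and check the four requirements for a $\s$-\PV\ extension. Since $R$ is $\f$-simple and a $\f$-pseudo $\s$-domain, Lemma~\ref{lemma:finite intersection of sigmapseudoprime ideals} gives that $\Q(R)$ inherits commuting endomorphisms $\f$ and $\s$ and is a $\fpsf$. For the constants, Lemma~\ref{lemma: constants of fsimple ring} yields $\Q(R)^\f = R^\f = K^\f$. For $\s$-generation by $Y$ inside $\Q(R)$: each $\det(\s^d(Y)) = \s^d(\det(Y))$ is a unit in $R$ (apply $\s^d$ to $\det(Y)\cdot\det(Y)^{-1}=1$) and therefore a unit in $\Q(R)$, which shows $R \subset K\langle Y\rangle_\s$; since $K\langle Y\rangle_\s$ is closed under inversion of non-zero divisors of $R$, it contains all of $\Q(R)$, so $\Q(R) = K\langle Y\rangle_\s$.

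The only non-routine point is the $\f$-simplicity of $R$ in part (i): neither being a subring of a $\f$-pseudo field nor being $\s$-generated by a fundamental matrix forces $\f$-simplicity on its own. The argument above handles this by reducing to the already established classical case via Lemma~\ref{lemma:higherorder} at each finite truncation and then exploiting that the directed union preserves $\f$-simplicity. Everything else is bookkeeping, so apart from this step the proposition is essentially formal.
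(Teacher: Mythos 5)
Your proposal is correct and follows essentially the same route as the paper: for part (i) it reduces $\f$-simplicity of $R$ to that of the finite truncations $R_d$, which are classical \PV\ rings by Lemma~\ref{lemma:higherorder} and Corollary~\ref{cor: relation between classical PV-rings and PV-ext}, and for part (ii) it invokes Lemmas~\ref{lemma:finite intersection of sigmapseudoprime ideals} and~\ref{lemma: constants of fsimple ring}. The extra details you supply (the pseudo-domain property via the inclusion $R\hookrightarrow L$, the directed-union step, and the $\s$-generation of $\Q(R)$ by $Y$) are points the paper treats as immediate, and your justifications of them are sound.
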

\begin{proof}
Clearly, $R:=K{\left\{Y,1/\det(Y)\right\}}_\s$ is a $\f$-pseudo domain. So, we only have to show that $R$ is $\f$-simple. We know from Lemma \ref{lemma:higherorder} that
$$L_d:=K{\big(Y,\s(Y),\ldots,\s^d(Y)\big)}\subset L$$ is a \PV\ extension of $(K,\f)$ for every $d\geq 0$. It, thus, follows from Corollary 
\ref{cor: relation between classical PV-rings and PV-ext} that \[R_d:=K\big[Y,\s(Y),\ldots,\s^d(Y),1/\big(\det(Y)\cdot\ldots\cdot\det(\s^d(Y))\big)\big]\subset R\] is a \PV\ ring over $K$. So, $R_d$ is $\f$-simple for every $d\geq 0$. Thus, $R$ must be $\f$-simple.

Now assume that $R$ is a $\s$-\PV\ ring with $R^\phi=K^\phi$. From Lemma \ref{lemma:finite intersection of sigmapseudoprime ideals}, we know that $\Q(R)$ is a \fpsf and, by Lemma \ref{lemma: constants of fsimple ring}, we have $\Q(R)^\f=R^\f=K^\f$.
\end{proof}

\subsection{ Existence of $\s$-\PV\ extensions}\label{sec:existence}

In this section, we will establish the existence of $\s$-\PV\ rings (Theorem~\ref{theo: existence of PV ring}) and $\s$-\PV\ extensions (Corollary~\ref{cor:existence of PV extensions}) for a given linear $\f$-equation $\f(y)=Ay$ under rather mild conditions on the base $\fs$-field $K$. The key idea for the existence proof is the prolongation construction from \cite[Lem.~2.16, p.~1392]{Wibmer:Chevalley}. The differential analogue of this construction has also been recently used to establish the existence of $\partial$-parameterized \PV\ extensions for linear differential or difference equations provided that the constants are algebraically closed (see \cite{Wibmer:existence,DiVizioHardouin:DescentandConfluence}). A more elaborate discussion of the existence of differentially parameterized \PV\ extensions for linear differential equations (including the case of several differential parameters) can be found in \cite{Ovchinkiovetal:ParameterizedPicardVessiotExtensionsandAtiyahExtensions}. 

\begin{rem}\label{rem:subtle2}
The idea of the prolongation construction is easy to explain. Indeed, let $K$ be a $\fs$-field and $A\in\Gl_n(K)$. We would like to construct a $\s$-\PV\ ring or a $\s$-\PV\ extension for $\f(y)=Ay$.
Let $$S := K{\left\{X,1/\det(X)\right\}}_\s$$ be the generic solution ring for $\f(y)=Ay$. By this, we mean that $X$ is the $n\times n$-matrix of $\s$-indeterminates, and the action of $\f$ is determined by $\f(X)=AX$. Finding a $\s$-\PV\ ring for $\f(y)=Ay$ is equivalent to finding a $\fs$-ideal $\m$ of $S$ that is $\f$-pseudo prime and $\f$-maximal. The existence of a $\f$-maximal ideal in $S$ is, of course, guaranteed by Zorn's lemma, but it is unclear if we can find a $\f$-maximal ideal that is additionally a $\s$-ideal and $\f$-pseudo prime.
\end{rem}

If $L$ is a $\s$-\PV\ extension for $\f(y)=Ay$ with fundamental solution matrix $Y\in\Gl_n(L)$, then  
$R_d$ is a \PV\ ring over $K$ for $\f(y)=A_dy$, as we have already seen in Lemma~\ref{lemma:higherorder} and Proposition \ref{prop: compare sigmaPVring simgaPVextension}. Thus, we should better find a $\fs$-ideal $\m$ of $S$ such that $$\m_d:=\m\cap S_d,\quad S_d := K{\big[X,\dots,\s^{d}(X),1/\det\big(X\cdot\ldots\cdot\s^{d}(X)\big)\big]}\subset S$$ is $\f$-maximal in $S_d$ for every $d\geq 0$. Note that not every $\f$-maximal $\f$-ideal of $S_d$ is of the form $\m_d$ for some $\f$-maximal $\fs$-ideal $\m$ of $S$. A necessary condition is given
by $$\s(\m_d\cap S_{d-1})\subset \m_d.$$ However, if we assume that we  have already constructed a $\f$-maximal $\f$-ideal $\m_d$ of $S_d$ that satisfies this condition, we can try to construct $\m_{d+1}$ by a choosing a $\f$-maximal $\f$-ideal of $S_{d+1}$ that contains $\m_d$ and $\s(\m_d)$. Then we could define $\m$ as the union of all the $\m_d$'s.

There are two obstructions to this procedure that we will have to overcome: 
\begin{enumerate} 
\item The ideal of $S_{d+1}$ generated by $\m_d$ and $\s(\m_d)$ might contain $1$. In this case, the construction would not apply.
\item The union $\bigcup\m_d$ is  
a $\f$-maximal $\fs$-ideal, but, a priori, it is unclear why it should be a $\f$-pseudo prime ideal.
\end{enumerate}
The following purely algebraic lemma is the crucial ingredient to overcome the first difficulty. The second difficulty will be resolved in Lemma~\ref{lemma:bound period}, which will eventually provide a bound for the period of $\m_d$.

\begin{lemma} \label{lemma: prolongation}
Let $K$ be a field and let $R$ be a finitely generated $K$-algebra. For $d\geq 0$, let $R_0,\ldots, R_{d+1}$ denote isomorphic copies of $R$.
Let $\ida\subset R_0\otimes\cdots\otimes R_{d}$ and $\idb\subset R_1\otimes\cdots\otimes R_{d+1}$ be ideals not containing $1$. (The tensors are understood to be over $K$.) Assume that
\begin{equation}\label{eq:lem18asmp}
\ida\cap (R_1\otimes\cdots\otimes R_d)=\idb\cap (R_1\otimes\cdots\otimes R_d).
\end{equation} Then the ideal of $R_0\otimes\cdots\otimes R_{d+1}$ generated by $\ida$ and $\idb$ does not contain $1$.
\end{lemma}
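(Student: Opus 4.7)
My plan is to reduce the lemma to showing that a certain tensor product is nonzero, and then invoke a minimal prime of a common subring. Write $A := R_1 \otimes_K \cdots \otimes_K R_d$ (with $A := K$ if $d = 0$), so that the three rings in question become $R_0 \otimes_K A$, $A \otimes_K R_{d+1}$, and $R_0 \otimes_K A \otimes_K R_{d+1} \cong (R_0 \otimes_K A) \otimes_A (A \otimes_K R_{d+1})$. The hypothesis \eqref{eq:lem18asmp} identifies a common ideal $\idc := \ida \cap A = \idb \cap A$ of $A$; set $A' := A/\idc$, $B := (R_0 \otimes_K A)/\ida$, and $C := (A \otimes_K R_{d+1})/\idb$. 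The definition of $\idc$ ensures that the induced maps $A' \to B$ and $A' \to C$ are injective, and a standard identification of quotients of tensor products gives
\[
(R_0 \otimes_K A \otimes_K R_{d+1})\,/\,(\ida, \idb) \;\cong\; B \otimes_{A'} C.
\]
So the lemma reduces to showing that $B \otimes_{A'} C \ne 0$.

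To this end, I would pick a minimal prime $\p$ of $A'$. Since localization is exact and $A' \hookrightarrow B$, $A' \hookrightarrow C$, the induced maps $A'_\p \hookrightarrow B_\p$ and $A'_\p \hookrightarrow C_\p$ stay injective, and $A'_\p \ne 0$. Hence both $B_\p$ and $C_\p$ are nonzero rings and admit maximal ideals. Because $\p$ is minimal in $A'$, the local ring $A'_\p$ has $\p A'_\p$ as its unique prime, so any maximal ideal of $B_\p$ (resp.\ $C_\p$) contracts to $\p A'_\p$. Pulling back through $B \to B_\p$ and $C \to C_\p$ yields primes $\q \subset B$ and $\q' \subset C$ with $\q \cap A' = \p = \q' \cap A'$.

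Let $k(\p) := \operatorname{Frac}(A'/\p)$, $L := \operatorname{Frac}(B/\q)$ and $L' := \operatorname{Frac}(C/\q')$. Both $L$ and $L'$ are field extensions of $k(\p)$, so $L \otimes_{k(\p)} L' \ne 0$ as a tensor product of nonzero vector spaces over a field. The composition
\[
B \otimes_{A'} C \twoheadrightarrow (B/\q) \otimes_{A'/\p} (C/\q') \to L \otimes_{k(\p)} L'
\]
is a ring homomorphism from $B \otimes_{A'} C$ into a nonzero ring, which forces $B \otimes_{A'} C \ne 0$. The main obstacle is the bookkeeping around the reductions: verifying the injectivity of $A' \hookrightarrow B, C$ from the hypothesis \eqref{eq:lem18asmp}, and then coherently lifting $\p$ to both sides so that $B$ and $C$ really share a common residue-field trace. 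Once this is in place, the residue-field step is formal; the finite-generation hypothesis on $R$ does not appear to be strictly necessary for this approach, though one could alternatively arrange a Nullstellensatz-style argument that uses it.
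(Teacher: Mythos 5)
Your proof is correct, and it takes a genuinely different, more elementary route than the paper's. The paper argues geometrically: writing $Y$ and $Z$ for the closed subschemes cut out by $\ida$ and $\idb$, it applies Chevalley's theorem on constructible images to the projection onto the middle factors $X_1\times\cdots\times X_d$, extracts dense open subsets of the common closure $W$ contained in each of the two images, intersects them to find a point lying in both images, and then passes to the fibre over that point; the finite-generation hypothesis enters precisely through Chevalley's theorem. You replace the Chevalley-plus-density step by the standard fact that for an injective ring map $A'\hookrightarrow B$ every \emph{minimal} prime of $A'$ is in the image of $\spec(B)$ (your localization argument for this is correct), applied simultaneously to $A'\hookrightarrow B$ and $A'\hookrightarrow C$ at the same minimal prime $\p$ -- in geometric language, you check the intersection of the two images at a generic point of $W$ instead of on a dense open subset. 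Both proofs finish with the same observation, namely that two nonzero algebras over the common residue field $k(\p)$ have nonzero tensor product; your explicit reduction to $B\otimes_{A'}C\neq 0$ is a clean formalization of what the paper leaves implicit in its last sentence. What your approach buys is exactly what you suspect: the hypothesis that $R$ is finitely generated over the field $K$ is not needed for this lemma (your argument is valid for arbitrary commutative rings), whereas the paper's proof genuinely uses it. The only step worth writing out more carefully in a final version is the chain of identifications $(R_0\otimes A\otimes R_{d+1})/(\ida,\idb)\cong B\otimes_A C\cong B\otimes_{A'}C$, the last isomorphism holding because $\idc$ annihilates both $B$ and $C$; this is routine and your bookkeeping around it is sound.
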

\begin{proof}
We set $X_i:=\spec(R_i)$ for $i=0,\ldots,d+1$. Let $Y$ and $Z$ denote the closed subschemes defined by $\ida$ and $\idb$, respectively.
We have to show that \[(Y\times X_{d+1})\cap(X_0\times Z)\subset X_0\times\cdots\times X_{d+1}\]
is non-empty.
Let $\pi_{1d}\colon X_0\times\cdots\times X_{d+1}\to X_1\times\cdots\times X_{d} $ denote the projection onto the factors ``in the middle''.
Assumption~\eqref{eq:lem18asmp} means that 
$$\overline{\pi_{1d}(Y\times X_{d+1})}=\overline{\pi_{1d}(X_0\times Z)} =: W.$$
By Chevalley's theorem, the image of a morphism of schemes of finite type over a field contains a dense open subset of its closure. Thus, there exist open dense subsets $U,V\subset W$ with $$U\subset\pi_{1d}(Y\times X_{d+1})\quad \text{and}\quad V\subset \pi_{1d}(X_0\times Z).$$ Then $U\cap V$ is also dense and open in $W$. In particular,
$$U\cap V\subset \pi_{1d}(Y\times X_{d+1})\cap \pi_{1d}(X_0\times Z)$$ is non-empty. But an element of $\pi_{1d}(Y\times X_{d+1})\cap \pi_{1d}(X_0\times Z)$ gives rise to an element of
$(Y\times X_{d+1})\cap(X_0\times Z)$.
\end{proof}
If $R$ is a $\f$-ring, we denote the ring of its {\em $\f$-periodic elements} by
\[R^{\f^\infty}={\left\{r\in R\:|\: \exists\ m\geq 1 \text{ such that } \f^m(r)=r\right\}}.\]
It is a $\f$-subring of $R$. If $K$ is a $\phi$-field, then $K^{\phi^\infty}$ is the relative algebraic closure of $K^\phi$ in $K$ \cite[Thm~2.1.12, p.~114]{Levin:difference}. In particular, if $K^\f$ is algebraically closed, then $K^{\phi^\infty}=K^\f$.

Analogues of the generic solution field $U$ in the following lemma appear in \cite[\S4]{ChatzidakisHardouinSinger:OntheDefinitionsOfDifferenceGaloisgroups} and \cite{Nieto:OnSigmaDeltaPicardVessiotExtension}. The relation between the periodic elements in a universal solution field and the period of a \PV\ ring, which we shall eventually use to bound the period of $\m_d$, has been found in \cite{ChatzidakisHardouinSinger:OntheDefinitionsOfDifferenceGaloisgroups}. In the language of \cite{ChatzidakisHardouinSinger:OntheDefinitionsOfDifferenceGaloisgroups}, the following lemma essentially says that the $m$-invariant of the systems $\f(y)=A_dy$ is bounded (as a function of $d\geq 0$).

\begin{lemma} \label{lemma:bound period}
Let $K$ be a $\fs$-field such that $K^{\f^\infty}=K^\f$. Let $A\in\Gl_n(K)$ and let $X$ denote the $n\times n$-matrix of $\s$-indeterminates over $K$. Set 
$$
U=K\langle X\rangle_\s(=\Q(K\{X_{ij}\:|\: 1\leq i,j\leq n\}_\s))
$$ and define a $\fs$-structure on $U$ by  $\f(\s^i(X))=\s^i(A)\s^i(X)$, $i\Ge 0$.
Then $U^{\f^\infty}$ is a finite field extension of $U^\f$.
\end{lemma}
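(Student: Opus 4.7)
My plan is to reduce the statement to classical \PV\ theory applied at each finite level and then pass to the limit using the $\s$-twist structure. First, write $U$ as the ascending union $U = \bigcup_{d\geq 0} U_d$, where $U_d := K(Y_0, \ldots, Y_d)$ with $Y_i := \s^i(X)$. Each $U_d$ is the fraction field of the $\f$-ring $R_d := K{\left[Y_0, \ldots, Y_d, 1/\det(Y_0\cdots Y_d)\right]}$, which serves as the generic solution ring for the $\f$-system $\f(y)=A_d y$ over $K$ with block-diagonal matrix $A_d := \operatorname{diag}(A, \s(A), \ldots, \s^d(A)) \in \Gl_{n(d+1)}(K)$, as in Lemma~\ref{lemma:higherorder}. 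Since every element of $U$ lies in some $U_d$, one has $U^{\f^\infty}=\bigcup_d U_d^{\f^\infty}$, so it suffices to find an integer $m\geq 1$, independent of $d$, such that $U_d^{\f^\infty}\subseteq U_d^{\f^m}$.

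For each $d\geq 0$ I would pick a $\f$-maximal $\f$-ideal $\m_d\subset R_d$ (existing by Zorn's lemma) and form the \PV\ ring $P_d:=R_d/\m_d$ for $\f(y)=A_d y$. By Lemma~\ref{lemma: almost no new constants}, the constant field $C_d:=P_d^\f$ is a finite algebraic extension of $K^\f$. The crux is to exhibit an integer $m$, independent of $d$, such that every $\f$-periodic element of $P_d$ is already $\f^m$-invariant. The idea is that each block $\s^i(A)$ of $A_d$ is the $\s^i$-twist of $A$, so a \PV\ ring for $\s^i(A)$ can be obtained from one for $\f(y)=Ay$ by applying $\s^i$, and the algebraic constants arising from the $i$-th block are $\s^i$-images of those for $\f(y)=Ay$ alone. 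Combining the blocks via the tensor-product construction from the proof of Proposition~\ref{prop: compare PVextension and PVring classical}, together with Lemma~\ref{lemma: linearly disjoint} and the hypothesis $K^{\f^\infty}=K^\f$, one shows that successive $\s^i$-twists do not accumulate independent algebraic extensions of $K^\f$, so both the period of $P_d$ and $[C_d:K^\f]$ remain uniformly bounded in $d$, producing the desired $m$.

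With this uniform bound in hand, the explicit $\f$-pseudo field decomposition recalled in \S\ref{sec:basicdefs} shows that every $\f$-periodic element of $P_d$ is $\f^m$-invariant. Transferring back to $U_d$ via the constant-extension argument in the proof of Proposition~\ref{prop: compare PVextension and PVring classical}---the generic fundamental solution matrix $\operatorname{diag}(Y_0,\ldots,Y_d)\in\Gl_{n(d+1)}(U_d)$ and a fundamental solution matrix in $P_d$ differ, after base change to a common constant extension, only by an invertible matrix with entries in the $\f$-constants---the $\f^m$-invariance transports to $U_d$. Thus $U_d^{\f^\infty}\subseteq U_d^{\f^m}$ for every $d$, so $U^{\f^\infty}\subseteq U^{\f^m}$. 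Since $U^{\f^m}/U^\f$ is a Galois extension whose Galois group is cyclic of order dividing $m$, it is a finite field extension, completing the proof. The main obstacle is precisely the uniform bound: one has to rule out unbounded accumulation of algebraic constants from the $\s$-twisted blocks, and the hypothesis $K^{\f^\infty}=K^\f$---that $K^\f$ is relatively algebraically closed in $K$---is essential to this verification.
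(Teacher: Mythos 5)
Your proposal reverses the logical order of the paper's argument, and the step you yourself identify as ``the crux'' --- the uniform bound on the period of $P_d$ and on $[C_d:K^\f]$, independent of $d$ --- is asserted rather than proved. The justification offered (``each block $\s^i(A)$ is a $\s^i$-twist of $A$, so combining the blocks via the tensor-product construction shows the twists do not accumulate independent algebraic extensions'') does not work as stated: a \PV\ ring for $\f(y)=A_dy$ is not a tensor product of \PV\ rings for the individual blocks but a quotient of such a tensor product by a $\f$-maximal $\f$-ideal, and it is exactly in this quotient that new $\f$-periodic elements can appear from the \emph{interaction} of the twisted blocks (e.g.\ ratios $y_iy_j^{-1}$ of solutions of the $i$-th and $j$-th blocks satisfying rank-one relations over $K$). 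Nothing in the proposal rules out the degrees of these elements over $K^\f$ growing with $d$; ruling this out is precisely the content of the lemma. Indeed, in the paper the implication runs the other way: the bound on $\per(R_d)$ for arbitrary \PV\ rings $R_d$ of $\f(y)=A_dy$ is Corollary~\ref{cor: bound period}, and it is \emph{deduced from} Lemma~\ref{lemma:bound period}, not used to prove it. So the proposal is circular at its core.

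The paper's actual proof bypasses \PV\ rings entirely and works directly with the generic solution field $U$. It considers the tower $KU^\f\subset KU^{\f^\infty}\subset U$, invokes the theorem that an intermediate $\s$-field of a finitely $\s$-generated $\s$-field extension is finitely $\s$-generated (\cite[Thm.~4.4.1, p.~292]{Levin:difference}) to get finitely many $\s$-generators $a_1,\ldots,a_m\in U^{\f^\infty}$ of $KU^{\f^\infty}$ over $KU^\f$, and then uses linear disjointness of $K$ from $U^{\f^e}$ over $K^\f$ (Lemma~\ref{lemma: linearly disjoint}) to descend coefficients and conclude $U^{\f^\infty}=U^\f\langle a_1,\ldots,a_m\rangle_\s\subseteq U^{\f^e}$, whence $[U^{\f^\infty}:U^\f]\leq e$. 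Some such nontrivial finiteness input is needed; your outline contains no substitute for it. The framing via $U=\bigcup U_d$ and the closing Artin-type argument that $[U^{\f^m}:U^\f]\leq m$ are fine, but they rest entirely on the missing bound.
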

\begin{proof}
We have a tower of $\fs$-fields $KU^\f\subset KU^{\f^\infty}\subset U$. By construction, $U$ is a finitely $\s$-generated $\s$-field extension of $KU^\f$. Since an intermediate $\s$-field of a finitely $\s$-generated $\s$-field extension is itself finitely $\s$-generated (\cite[Thm.~4.4.1, p.~292]{Levin:difference}), it follows that $KU^{\f^\infty}$ is finitely $\s$-generated over $KU^\f$. Hence, we can find $$a_1,\ldots,a_m\in U^{\f^\infty}$$ that $\s$-generate $KU^{\f^\infty}$ as a $\s$-field extension of $KU^\f$.
We claim that $$U^{\f^\infty}=U^\f\langle a_1,\ldots,a_m\rangle_\s.$$ The inclusion ``$\supset$'' is clear. So, let $a\in U^{\f^\infty}$. Let $(b_i)_{i\in I}$ be a $K^\f$-basis of $U^\f\langle a_1,\ldots,a_m\rangle_\s$. As $$a\in KU^{\f^\infty}=KU^\f\langle a_1,\ldots,a_m\rangle_\s,$$ we can write
\[a=\frac{\sum\lambda_i\cdot  b_i}{\sum{\mu_i}\cdot b_i}\]
with $\lambda_i,\mu_i\in K$. Multiplying by the denominator yields
\begin{equation} \label{eq:linear dependence}
\sum \mu_i\cdot a\cdot b_i=\sum\lambda_i\cdot b_i.
\end{equation}
We can choose an integer $e\geq 1$ such that $a,b_i\in U^{\f^e}$ whenever $\lambda_i$ or $\mu_i$ is non-zero.
Then~\eqref{eq:linear dependence} signifies that the family $${\left(a\cdot b_i,b_j\right)}_{i,j\in I}\quad \text{in}\quad U^{\f^e}$$ is $K$-linearly dependent. Since $K$ is linearly disjoint from $U^{\f^e}$ over $K^{\f^e}=K^\f$ (Lemma \ref{lemma: linearly disjoint}), we can find a non-trivial relation
\begin{equation} \label{eq:linear dependence2}
\sum \mu_i'\cdot a\cdot b_i=\sum\lambda_i'\cdot b_i
\end{equation}
with $\mu_i',\lambda_i'\in K^\f$. Suppose $\sum\mu_i'b_i=0$. Then also $\sum\lambda_i'b_i=0$.  Since the $b_i$'s are $K^\f$-linearly independent, this is only possible if relation (\ref{eq:linear dependence2}) is trivial. Therefore, we can divide by the denominator to find that
\[a=\frac{\sum\lambda_i'\cdot  b_i}{\sum{\mu_i'}\cdot b_i}\in U^\f\langle a_1,\ldots,a_m\rangle_\s\]
as desired.
Now let $e\geq 1$ be such that $a_1,\ldots,a_m\in U^{\f^e}$. Then it follows from $$U^{\f^\infty}=U^\f\langle a_1,\ldots,a_m\rangle_\s$$ that $U^{\f^\infty}=U^{\f^e}$. Consequently,
${\left[U^{\f^\infty}\colon U^\f\right]}\leq e$.
\end{proof}

\begin{cor} \label{cor: bound period}
Let $K$ be a $\fs$-field such that $K^{\f^\infty}=K^\f$. Let $A\in\Gl_n(K)$. For $d\geq 0$, let $R_d$ be a \PV\ ring for $\f(y)=A_dy$, where
\[A_d=\left(\begin{array}{cccc}
 A & 0 & \cdots & 0 \\
 0 & \s(A) & \cdots & 0\\
\vdots & & \ddots & \vdots  \\
0 & \cdots & 0 &\s^d(A)
\end{array}\right)\in\Gl_{n(d+1)}(K).
\]
Then the sequence ${(\per(R_d))}_{d\geq 0}$ is bounded. 
\end{cor}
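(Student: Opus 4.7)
My plan is to show $\per(R_d) \le N := [U^{\f^\infty}:U^\f]$; since $N$ is finite by Lemma~\ref{lemma:bound period} and $U$ is defined independently of $d$, this gives the desired uniform bound.

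I would first translate the period of $R_d$ into a count of $\f$-periodic elements. As $R_d$ is $\f$-simple, Noetherian, and a $\f$-pseudo domain, it decomposes as $R_d = e_1 R_d \oplus \cdots \oplus e_{m_d} R_d$ with $m_d := \per(R_d)$, where the orthogonal idempotents $e_i$ are cyclically permuted by $\f$. In particular $\f^{m_d}(e_i) = e_i$, so $e_i \in R_d^{\f^\infty}$. Projecting any relation $\sum c_i e_i = 0$ with $c_i \in R_d^\f$ onto a factor $e_j R_d$ yields $c_j e_j = 0$, and because $c_j\in R_d^\f \subset \bigoplus_i e_i R_d$ is determined by any single one of its components (via the cyclic $\f$-action), we obtain $c_j = 0$. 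Hence $e_1,\ldots,e_{m_d}$ are $R_d^\f$-linearly independent in $R_d^{\f^\infty}$, giving $\dim_{R_d^\f} R_d^{\f^\infty} \ge m_d$.

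I would then transport this count into $U$. Let $L_d := \Q(R_d)$ with fundamental solution matrix $Y_d$, and consider the $K$-$\f$-subalgebra $T_d := K[X,\s(X),\ldots,\s^d(X),1/\det(X\s(X)\cdots\s^d(X))] \subset U$, which carries the block-diagonal fundamental solution matrix $X_d := \mathrm{diag}(X,\s(X),\ldots,\s^d(X))$ for $\f(y) = A_d y$. The matrix $Z_d := (Y_d^{-1}\otimes 1)(1\otimes X_d) \in \Gl_{n(d+1)}(R_d \otimes_K T_d)$ is $\f$-constant, and Lemma~\ref{lemma: linearly disjoint} (used exactly as in the proof of Proposition~\ref{prop: compare PVextension and PVring classical}) yields the identification
\[
R_d \otimes_K T_d = R_d \otimes_{R_d^\f} R_d^\f[Z_d,1/\det(Z_d)].
\]
Choosing an $R_d^\f$-morphism $\psi\colon R_d^\f[Z_d,1/\det(Z_d)] \to C$ into an algebraic closure $C$ of $R_d^\f$ produces, by composition with the inclusions $R_d, T_d \hookrightarrow R_d \otimes_K T_d$, a $K$-$\f$-embedding $R_d \hookrightarrow R_d \otimes_{R_d^\f} C$ (which is $\f$-simple by Lemma~\ref{lemma: stays simple after constant extension}) together with a $K$-$\f$-morphism $T_d \to R_d \otimes_{R_d^\f} C$ under which the images of the fundamental solution matrices $Y_d$ and $X_d$ differ by $\psi(Z_d) \in \Gl_{n(d+1)}(C)$. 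Because $T_d \subset U$ and $C$ extends only the $\f$-constants, this comparison pulls the $\f$-periodic idempotents $e_i$ back to a family of $R_d^\f\otimes_{K^\f}C$-linearly independent $\f$-periodic elements inside $U^{\f^\infty}\otimes_{K^\f} C$. Since $[U^{\f^\infty}\otimes_{K^\f}C : U^\f \otimes_{K^\f} C] = N$ by Lemma~\ref{lemma:bound period} and base change, I conclude $m_d \le N$.

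The main obstacle is executing this transport cleanly: $R_d$ does not embed directly into the field $U$, because for $m_d > 1$ the idempotents $e_i$ have no lift to the integral domain $T_d \subset U$. The remedy is the $\f$-constant comparison matrix $Z_d$ combined with a base change to an algebraic closure $C$ of the constants, exactly as in the proof of Proposition~\ref{prop: compare PVextension and PVring classical}. Once this bookkeeping is carried out, the dimension inequality from the first step combines with the $d$-independent bound from Lemma~\ref{lemma:bound period} to give $\per(R_d) \le N$, uniformly in $d$.
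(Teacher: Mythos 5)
Your overall target, $\per(R_d)\le[U^{\f^\infty}:U^\f]$, is the same inequality the paper proves, and your first step (the cyclically permuted idempotents of $R_d$ are $R_d^\f$-linearly independent elements of $R_d^{\f^\infty}$) is correct. The gap is in the transport step, and it is not just bookkeeping. The morphism you construct goes $T_d\to R_d\otimes_{R_d^\f}C$, i.e.\ from the generic solution ring to a specialization of it. Over $K$ the ring $T_d$ is \emph{not} $\f$-simple (its total quotient ring has the transcendental ``generic constants''), so this map has a large kernel; after tensoring with $C$ it becomes a surjection onto $R_d\otimes_{R_d^\f}C$, and a surjection of $\f$-rings gives no way to lift the idempotents $e_i$ --- or any $\f$-periodic elements --- back to $\f$-periodic elements of $T_d\otimes_{K^\f}C\subset U\otimes_{K^\f}C$. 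A preimage of a $\f$-periodic element under a non-injective $\f$-morphism need not be $\f$-periodic, and no mechanism in your sketch selects a periodic preimage. So the claimed ``family of linearly independent $\f$-periodic elements inside $U^{\f^\infty}\otimes_{K^\f}C$'' is not produced, and the appeal to Lemma~\ref{lemma:bound period} never gets off the ground.

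A second, related problem is that your constant field $C=\overline{R_d^\f}=\overline{K^\f}$ is too small to compare $R_d$ with the model sitting inside $U$. The matrix relating a fundamental matrix of $R_d$ to the generic matrix $X_d\in\Gl_{n(d+1)}(U)$ has entries algebraic over $U^\f$, and $U^\f$ typically has infinite transcendence degree over $K^\f$; your $\psi(Z_d)$ only relates $Y_d$ to the \emph{already specialized} image of $X_d$ inside $R_d\otimes C$, which carries no information about $U$. The paper's proof avoids both issues by changing the base: it observes that $S_d=KU^\f[X,\ldots,\s^d(X),1/\det(\cdots)]$ \emph{is} a \PV\ ring over the larger field $KU^\f$ (there it is $\f$-simple), base changes both $S_d$ and $R_d$ to an algebraic closure $\overline{U^\f}$ so that the resulting base $K'=\Q(K\otimes_{K^\f}\overline{U^\f})$ has algebraically closed constants, and then invokes the uniqueness of \PV\ rings (Theorem~\ref{theo: uniqueness classical}) to conclude $R_d'\cong S_d'$, whence $\per(R_d)\le\per(S_d')\le[U^{\f^\infty}:U^\f]$. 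Some such rigidity statement is indispensable: without uniqueness after a sufficiently large constant extension there is no reason an arbitrary \PV\ ring $R_d$ (an arbitrary quotient of $T_d$ by a $\f$-maximal $\f$-ideal) is controlled by the single model in $U$. To repair your argument you would need to replace the specialization $T_d\to R_d\otimes_{R_d^\f}C$ by this isomorphism over $\overline{U^\f}$.
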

\begin{proof}
Let $U=K\langle X\rangle_\s$ as in Lemma \ref{lemma:bound period}. We will show that $\per(R_d)\leq {\left[U^{\f^\infty}\colon U^\f\right]}$ for $d\geq 0$.
Let $\overline{U^\f}$ denote an algebraic closure of $U^\f$, considered as a constant $\f$-ring. We know that $K$ is a regular field extension of $K^\f$. (By assumption, $K^\f$ is relatively algebraically closed in $K$ and $K$ is always separable over $K^\f$ \cite[Cor.~1.4.16, p.~16]{Wibmer:thesis}). Therefore, $K\otimes_{K^\f}\overline{U^\f}$ is an integral domain. Moreover, $K\otimes_{K^\f}\overline{U^\f}$ is $\f$-simple by Lemma \ref{lemma: stays simple after constant extension}. It follows that
\[K':=\Q{\big(K\otimes_{K^\f}\overline{U^\f}\big)}\]
is a $\f$-field with $K'^\f=\overline{U^\f}$ algebraically closed.
It is clear from the definition of $U$ that $$KU^\f{\big(X,\ldots,\s^d(X)\big)}\subset U$$ is a \PV\ extension of $KU^\f$ for the linear $\f$-equation $\f(y)=A_dy$. It follows from Corollary \ref{cor: relation between classical PV-rings and PV-ext} that $$S_d:=KU^\f{\big[X,\ldots,\s^d(X),1/\det\big(X\cdot\ldots\cdot\s^d(X)\big)\big]}$$ is a \PV\ ring over $KU^\f$.
Then $S_d':=S_d\otimes_{U^\f}\overline{U^\f}$ is a \PV\ ring over
\[KU^\f\otimes_{U^\f}\overline{U^\f}=\Q{\left(K\otimes_{K^\f}U^\f\right)}\otimes_{U^\f}\overline{U^\f}=\Q{\big(K\otimes_{K^\f}\overline{U^\f}\big)}=K'\]
by Lemma \ref{lemma: stays simple after constant extension}.
Note that $S_d\subset U$ is an integral domain and that $$\per{\big(S_d'\big)}\leq {\left[U^{\f^\infty}\colon U^\f\right]}$$ as $U^{\f^\infty}$ is the relative algebraic closure of $U^\f$ in $U$.
As $R_d$ is a \PV\ ring for $\f(y)=A_dy$ over $K$,  $R_d\otimes_{R_d^\f} \overline{U^\f}$ is $\f$-simple by
Lemma \ref{lemma: stays simple after constant extension}. (Note that $R_d^\f$ can be embedded in $\overline{U^\f}$ by Proposition \ref{prop: compare PVextension and PVring classical}.)
The canonical map
$$K\otimes_{K^\f} \overline{U^\f}\to R_d\otimes_{R_d^\f} \overline{U^\f}$$
is injective, because $K\otimes_{K^\f}\overline{U^\f}$ is $\f$-simple. Localizing this inclusion
at the non-zero divisors of $K\otimes_{K^\f} \overline{U^\f}$, we obtain a \PV\ ring $R_d'$ over $K'$. Since $K'^\f=\overline{U^\f}$ is algebraically closed, $R_d'$ and $S_d'$ are isomorphic. It follows that
\[\per(R_d)\leq\per{\left(R_d'\right)}=\per{\left(S_d'\right)}\leq {\left[U^{\f^\infty}\colon U^\f\right]}.\let\qedsymbol\openbox\qedhere
\]
\end{proof}

We are now prepared to establish the main existence theorem.

\begin{theo} \label{theo: existence of PV ring}
Let $K$ be a $\fs$-field such that $K^{\f^\infty}=K^\f$, $\s\colon K^\f\to  K^\f$ is an automorphism, and $A\in\Gl_n(K)$. Then there exists a $\s$-\PV\ ring $R$ for $\f(y)=Ay$ such that $R^\f$ is an algebraic field extension of $K^\f$.
\end{theo}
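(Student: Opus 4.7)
The strategy is to realize the desired $\s$-\PV\ ring as a quotient $S/\m$ of the generic solution ring $S = K\{X, 1/\det(X)\}_\s$, where $X$ is the $n\times n$ matrix of $\s$-indeterminates and $\f(X) = AX$, as in Remark~\ref{rem:subtle2}. For $S/\m$ to be a $\s$-\PV\ ring, the $\fs$-ideal $\m$ must be $\f$-maximal in $S$, and $S/\m$ must be a $\f$-pseudo domain. Writing $S_d = K{\left[X, \ldots, \s^d(X), 1/\det\left(X \cdot \ldots \cdot \s^d(X)\right)\right]}$, I aim to produce $\m$ as the union of an ascending chain $\m_d := \m \cap S_d \subset S_d$ of $\f$-maximal $\f$-ideals, so that by Lemma~\ref{lemma:higherorder} and Corollary~\ref{cor: relation between classical PV-rings and PV-ext} every $R_d := S_d/\m_d$ is a classical \PV\ ring for $\f(y) = A_d y$.

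The $\m_d$ are built by induction on $d$, preserving three properties: (i) $\m_d$ is $\f$-maximal in $S_d$, (ii) $\m_d \cap S_{d-1} = \m_{d-1}$, and (iii) $\s(\m_{d-1}) \subseteq \m_d$, so that the union $\m = \bigcup_d \m_d$ becomes a $\s$-ideal. Starting with any $\f$-maximal $\f$-ideal $\m_0 \subset S_0$, at step $d \rightsquigarrow d+1$ one forms the ideal $\mathfrak n$ of $S_{d+1}$ generated by $\m_d$ and $\s(\m_d)$. The key point, provided by Lemma~\ref{lemma: prolongation} applied to the finitely generated $K$-algebra $R = K[X, 1/\det(X)]$ via the tensor decomposition $S_{d+1} \cong R_0 \otimes_K \cdots \otimes_K R_{d+1}$, is that $1 \notin \mathfrak n$; the intersection hypothesis~\eqref{eq:lem18asmp} is exactly what one obtains by applying $\s$ to (ii). Any $\f$-maximal $\f$-ideal $\m_{d+1} \supseteq \mathfrak n$ in $S_{d+1}$ then works: $\m_{d+1} \cap S_d$ is a proper $\f$-ideal containing $\m_d$, so the $\f$-maximality of $\m_d$ in $S_d$ forces equality. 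Setting $\m := \bigcup_d \m_d$ yields a $\f$-maximal $\fs$-ideal of $S$, and the image of $X$ in $R := S/\m$ is a $\s$-generating fundamental solution matrix. Moreover $R^\f = \bigcup_d R_d^\f$ is algebraic over $K^\f$ by Lemma~\ref{lemma: almost no new constants}.

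The main obstacle is verifying that $R$ is a $\f$-pseudo domain, since an ascending union of $\f$-pseudo domains need not be one. This is precisely where the hypothesis $K^{\f^\infty} = K^\f$ enters: Corollary~\ref{cor: bound period} provides a uniform bound $N$ on the periods $\per(R_d)$. By refining the choice of $\m_{d+1}$ in the inductive construction so that $\per(R_d)$ eventually stabilizes at its maximum value $\Le N$, the orthogonal idempotents decomposing each $R_d$ can be arranged to extend compatibly to those of $R_{d+1}$; in the limit $R$ inherits a decomposition into finitely many $\f^{\per}$-domains and is therefore a $\f$-pseudo domain. The assumption that $\s$ restricts to an automorphism of $K^\f$ enters at this point to ensure that the limiting idempotent decomposition is $\s$-stable, so that $R$ is a genuine $\f$-pseudo $\s$-domain and not merely a $\f$-pseudo domain. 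I expect this period-stabilization step to be the hardest part of the argument, as it forces one to make the inductive choices of $\m_d$ in a coordinated way rather than arbitrarily.
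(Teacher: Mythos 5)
Your overall architecture for the surjective case matches the paper's: generic solution ring $S$, inductive construction of $\f$-maximal $\f$-ideals $\m_d\subset S_d$ compatible under intersection and under $\s$, Lemma~\ref{lemma: prolongation} to show the ideal generated by $\m_d$ and $\s(\m_d)$ is proper, and Corollary~\ref{cor: bound period} to control the periods. But there is a genuine gap: your argument tacitly requires $\s\colon K\to K$ to be an automorphism, whereas the theorem only assumes $\s\colon K^\f\to K^\f$ is one. The step where you treat $\s(\m_d)$ as an ideal of $S_{1,d+1}$ satisfying the intersection hypothesis~\eqref{eq:lem18asmp} uses that $\s\colon S_d\to S_{1,d+1}$ is an isomorphism, which fails when $\s$ is not surjective on $K$: then $\s(\m_d)$ is only a subset, the ideal it generates need not be proper, and its intersection with $S_{1,d}$ is not controlled. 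The paper resolves this with a second half you are missing: pass to the inversive closure $K^*$ of $(K,\s)$, extend $\f$ to $K^*$, use the hypothesis on $\s|_{K^\f}$ to verify ${K^*}^{\f^\infty}={K^*}^\f=K^\f$ (this, not any idempotent-stability issue, is where that hypothesis is actually consumed), run the construction over $K^*$, and then descend: take $R=K\{Y,1/\det(Y)\}_\s\subset R^*$ and prove each $R_d$ is $\f$-simple via Proposition~\ref{prop: compare PVextension and PVring classical}, since its constants embed into ${R^*}^\f$ and are therefore algebraic over $K^\f$. Also note that the verification of~\eqref{eq:lem18asmp} needs the equality $\s^{-1}(\m_d)=\m_{d-1}$, not merely your inclusion $\s(\m_{d-1})\subseteq\m_d$; the equality does follow from your inclusion by $\f$-maximality of $\m_{d-1}$, but you should say so, as ``applying $\s$ to (ii)'' alone does not give both inclusions.

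Separately, your treatment of the $\f$-pseudo domain property is more complicated than necessary and slightly off. No refinement or coordination of the choices of $\m_{d+1}$ is needed: each $\m_d$ is automatically $\f$-pseudo prime because $S_d$ is Noetherian, Corollary~\ref{cor: bound period} bounds $\per(\m_d)$ uniformly for \emph{any} admissible choices, and a $\f$-maximal $\f$-ideal with finitely many minimal primes is $\f$-pseudo prime \cite[Prop.~1.1.2, p.~2]{Wibmer:thesis}; the bound on the periods caps the number of minimal primes over $\m=\bigcup\m_d$. Likewise, no $\s$-stability of the idempotent decomposition is required: being a $\f$-pseudo $\s$-domain means only that $(R,\f)$ is a $\f$-pseudo domain.
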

\begin{proof}
We first assume that $\s\colon K\to K$ is an automorphism. Let $X$ be the $n\times n$-matrix of $\s$-indeterminates over $K$. We denote  the localization of the $\s$-polynomial ring $K{\{X_{ij}|\ 1\leq i,j\leq n\}}_\s$ at the multiplicatively closed subset generated by $\det(X),\s(\det(X)),\ldots$ by $S$. This is naturally a $K$-$\s$-algebra. We define a $\fs$-structure on $S$ by setting
$$
 \f(X)=AX,\quad
\f(\s(X))=\s(A)\s(X),\quad
\f{\left(\s^2(X)\right)}=\s^2(A)\s^2(X),\quad
\ldots
$$
For $0\leq i\leq j$, we also define the following $K$-$\f$-subalgebras of $S$:
\[S_{i,j}=K{\left[\s^i(X),\tfrac{1}{\s^i(\det(X))},\ldots,\s^j(X), \tfrac{1}{\s^j(\det(X))}\right]}=K{\left[\s^i(X),\ldots,\s^j(X),\tfrac{1}{\det(\s^i(X)\cdot\ldots\cdot\s^j(X))}\right]}\subset S,\quad S_j := S_{0,j}.\]
We will show by induction on $d\geq 0$ that there exists a sequence ${(\m_d)}_{d\geq 0}$ with the following properties:
\begin{enumerate}
 \item $\m_d$ is a $\f$-maximal $\f$-ideal of $S_d$,
\item $\m_d\cap S_{d-1}=\m_{d-1}$, and
\item $\s^{-1}(\m_d)=\m_{d-1}$, where $\s\colon S_{d-1}\to S_{d}$.
 \end{enumerate}
 For $d=0$, we can choose $\m_0$ to be any $\f$-maximal $\f$-ideal of $S_0=K[X,1/\det(X)]$. Assume that a sequence $\m_0,\ldots,\m_{d}$ with the desired properties has  been already constructed. We will construct $\m_{d+1}$.
Let $\ida$ denote the ideal of $S_{d+1}$ generated by $\m_{d}$ and $\s(\m_{d})$. The crucial step now is to show that $1\notin\ida$. For this, we would like to apply Lemma \ref{lemma: prolongation}. Note that $S_{d+1}$ is the $d+2$-fold tensor product of $S_0$ with itself. Since $\s$ is an automorphism on $K$, 
$$
\s\colon S_d\to S_{1,d+1}
$$ is an isomorphism and so $\s(\m_d)$ is an ideal of $S_{1,d+1}$.
We need to verify that
\[\m_d\cap S_{1,d}=\s(\m_d)\cap S_{1,d}.\]
Let $f\in \m_d\cap S_{1,d}$. Then $f$ is of the form $f=\s(g)$ for
some $g\in S_{d-1}$. Since $f\in\m_d$, we have $$g\in\s^{-1}(\m_d)=\m_{d-1}\subset\m_d.$$ Thus, $f\in\s(\m_d)$.
Now let $f\in \s(\m_d)\cap S_{1,d}$. Then $f$ is of the form $f=\s(g)$ with
$$g\in\m_d\cap S_{d-1}=\m_{d-1}.$$ So $f=\s(g)\in\m_d$.
We can thus apply Lemma \ref{lemma: prolongation} to conclude that $1\notin\ida$. By construction, $\ida$ is a $\f$-ideal of $S_{d+1}$. Let $\m_{d+1}$ be a $\f$-maximal $\f$-ideal of $S_{d+1}$ containing $\ida$. Then $$\m_{d+1}\cap S_d\quad \text{and}\quad \s^{-1}(\m_{d+1})$$ are $\f$-ideals of $S_{d}$ containing $\m_d$. As $\m_d$ is $\f$-maximal in $S_d$, it follows that $$\m_{d+1}\cap S_d=\m_d\quad \text{and}\quad \s^{-1}(\m_{d+1})=\m_d.$$ This concludes the inductive step.
Now that we have constructed the sequence ${(\m_d)}_{d\geq 0}$, we can define
\[\m:=\bigcup\nolimits_{d\geq 0}\m_d.\]
This is a $\fs$-ideal of $S$. Since the $\m_d$'s are $\f$-maximal, it follows that $\m$ is also $\f$-maximal.
The next crucial step is to show that $\m$ is $\f$-pseudo prime.

In general, a $\f$-maximal $\f$-ideal need not be $\f$-pseudo prime. However, a $\f$-maximal $\f$-ideal that has only finitely many minimal prime ideals is $\f$-pseudo prime \cite[Prop. 1.1.2, p. 2]{Wibmer:thesis}. In particular, in a Noetherian $\f$-ring, every $\f$-maximal $\f$-ideal is $\f$-pseudo prime. So the $\m_d$'s are $\f$-pseudo prime ideals. Thus, to show that $\m$ is $\f$-pseudo prime, it will suffice to show that the sequence ${(\per(\m_d))}_{d\geq 0}$ is bounded.
But this is clear from Corollary \ref{cor: bound period}, because $R_d:=S_d/\m_d$ is a \PV\ ring for $\f(y)=A_dy$.

So, $R:=S/\m$ is a $\f$-pseudo domain, and it is clear from the construction that $R$ is a $\s$-\PV\ ring for $\f(y)=Ay$ over $K$. It remains to see that $R^\f$ is algebraic over $K^\f$. But $R$ is the union of the $R_d$'s and the $R_d$'s are \PV\ rings over $K$, so $R_d^\f$ is algebraic over $K^\f$ (Lemma \ref{lemma: almost no new constants}) and, consequently, $R^\f$ is algebraic over $K^\f$. This concludes the proof for the case that $\s\colon K\to K$ is surjective.

Now let $\s\colon K\to K$ be arbitrary. We consider the inversive closure $K^*$ of $K$ with respect to $\sigma$ (see \cite[Def.~2.1.6, p.~109]{Levin:difference}.) For every $a\in K^*$, there exists an integer $l\geq 1$ such that $\s^l(a)\in K$. We naturally extend $\f$ from $K$ to $K^*$ by $$\f(a)=\s^{-l}{\big(\f{\big(\s^l(a)\big)}\big)}.$$ 
Suppose that $a\in {K^*}^{\f^d}$.
Then $$a=\f^d(a)=\s^{-l}{\big(\f^d{\big(\s^l(a)\big)}\big)}$$ and so $$\s^l(a)=\f^d{\big(\s^l(a)\big)},$$ that is, $\s^l(a)\in K^{\f^d}=K^\f$. By the hypothesis, $K^\f$ is $\s$-inversive. Therefore, $a\in K^\f$. It follows that $${K^*}^{\f^\infty}=K^\f={K^*}^\f.$$
By the first part of the proof, there exists a $\s$-\PV\ ring $R^*$ over $K^*$ for $\f(y)=Ay$ with ${R^*}^\f$ algebraic over $K^\f$. Let $Y\in\Gl_n(R^*)$ denote a fundamental matrix.
We claim that $$R:=K{\left\{Y,1/\det(Y)\right\}}_\s\subset R^*$$ is a $\s$-\PV\ ring for $\f(y)=Ay$ over $K$ with $R^\f$ algebraic over $K^\f$.
As ${R^*}^\f$ is algebraic over ${K^*}^\f=K^\f$, $R^\f$ is algebraic over $K^\f$. So it only remains to show that $R$ is $\f$-simple. For this, it suffices to show that
$$R_d:=K\big[Y,1/\det(Y),\ldots,\s^d(Y),1/\det{\big(\s^d(Y)\big)}\big]$$ is $\f$-simple for every $d\geq 0$.
Let $L^*$ denote the total quotient ring of $R^*$ and $L_d$ the total quotient ring of $R_d$. Since $R^*$ is $\f$-simple, we have ${L^*}^\f={R^*}^\f$ by Lemma \ref{lemma: constants of fsimple ring}. As $L_d\subset L^*$, it follows that $L_d^\f$ is algebraic over $K^\f$. By Proposition~\ref{prop: compare PVextension and PVring classical}, this implies that $R_d$ is $\f$-simple.
\end{proof}

\begin{cor}[Existence of $\s$-\PV\ extensions] \label{cor:existence of PV extensions}
Let $K$ be a $\fs$-field and $A\in\Gl_n(K)$. Assume that $K^\f$ is an algebraically closed inversive $\s$-field. Then there exists a $\s$-\PV\ extension for $\f(y)=Ay$.
\end{cor}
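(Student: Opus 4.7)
The plan is to simply apply Theorem~\ref{theo: existence of PV ring} and then pass from the $\s$-\PV\ ring to its total ring of fractions via Proposition~\ref{prop: compare sigmaPVring simgaPVextension}.

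First I would verify that the two technical hypotheses of Theorem~\ref{theo: existence of PV ring} are satisfied. Recall that $K^{\f^\infty}$ is the relative algebraic closure of $K^\f$ in $K$ (as noted after Lemma~\ref{lemma: prolongation}, citing \cite[Thm.~2.1.12, p.~114]{Levin:difference}). Since $K^\f$ is assumed to be algebraically closed, its relative algebraic closure in any field extension equals $K^\f$ itself, hence $K^{\f^\infty}=K^\f$. The second hypothesis, that $\s\colon K^\f\to K^\f$ be an automorphism, is immediate from the assumption that $K^\f$ is an inversive $\s$-field.

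Next I would invoke Theorem~\ref{theo: existence of PV ring} to obtain a $\s$-\PV\ ring $R$ for $\f(y)=Ay$ such that $R^\f$ is an algebraic field extension of $K^\f$. Because $K^\f$ is algebraically closed, this forces $R^\f=K^\f$.

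Finally, I would apply Proposition~\ref{prop: compare sigmaPVring simgaPVextension}(ii), whose hypothesis $R^\f=K^\f$ has just been verified, to conclude that $\Q(R)$ is a $\s$-\PV\ extension of $K$ for $\f(y)=Ay$. No obstacle arises here; the entire work has already been carried out in Theorem~\ref{theo: existence of PV ring}, and the corollary is simply an extraction of the field-theoretic consequence under the cleaner hypothesis of an algebraically closed inversive $\s$-constant field.
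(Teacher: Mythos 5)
Your proposal is correct and follows exactly the route the paper takes: the paper's proof is the one-line "clear from Theorem~\ref{theo: existence of PV ring} and Proposition~\ref{prop: compare sigmaPVring simgaPVextension}," and you have simply spelled out the hypothesis checks (algebraically closed $\Rightarrow K^{\f^\infty}=K^\f$ and $R^\f=K^\f$; inversive $\Rightarrow \s$ an automorphism on $K^\f$) that the paper leaves implicit.
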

\begin{proof}
This is clear from Theorem \ref{theo: existence of PV ring} and Proposition \ref{prop: compare sigmaPVring simgaPVextension}.
\end{proof}

\subsection{Existence of $\s$-\PV\ extensions for some specific base fields} \label{subsec: special existence}

The purpose of this section is to establish the existence of $\s$-\PV\ extensions over important $\fs$-fields like $K=\mathbb{C}(t,z)$, where
$$\f(f(t,z))=f(t,z+1)\quad \text{and}\quad \s(f(t,z))=f(qt,z)\quad \text{or}\quad \s(f(t,z))=f(t+\alpha,z)$$ for some $q,\alpha\in\C^\times$. Note that the general existence result for $\s$-\PV\ extensions  (Corrollary \ref{cor:existence of PV extensions}) does not apply because $K^\f=\mathbb{C}(t)$ is not algebraically closed.

We will show quite generally that, for every linear $\f$-equation $\f(y)=Ay$ over $K=k(z)$, there exists a $\s$-\PV\ extension, where $k$ is an arbitrary $\s$-field of characteristic zero. Moreover, we give a very concrete recipe how $\s$-\PV\ rings over such $K$ can be constructed inside rings of sequences. Cf. \cite[Prop. 4.1, p. 45]{SingerPut:difference}.

Let $k$ be a field. The ring $\seq_k$ of sequences in $k$ (cf. \cite[Ex.~1.3, p. 4]{SingerPut:difference}) consists of all sequences
\[a=(a(0),a(1),\ldots),\quad a(0),a(1),\ldots\in k,\]
and two sequences are identified if they agree starting from some index. The ring structure of $\seq_k$ is given by the componentwise addition and multiplication. By setting
\[\f((a(0),a(1),a(2),\ldots))=(a(1),a(2),\ldots),\]
we turn $\seq_k$ into an inversive $\f$-ring.
If $k$ is a $\s$-field, then $\seq_k$ naturally becomes  a $\fs$-ring by setting
\[\s((a(0),a(1),\ldots))=(\s(a(0)),\s(a(1)),\ldots).\]
Note that $\seq_k^\f=k$.
We consider $k(z)$, the field of rational function in one variable over $k$, as $\fs$-field by setting $$\f(f(z))=f(z+1),\ \ f\in k(z),\quad \text{and}\quad \s(z)=z.$$ If $\Char k =0$, we can define a $\fs$-embedding $$k(z)\to\seq_k\quad\text{by}\quad f\mapsto (f(0),f(1),\ldots).$$
The expression $f(i)$ is well-defined for $i\gg0$, as the denominator of $f\in k(z)$ has only finitely many zeros.

\begin{prop} \label{prop:special existence}
Let $k$ be a $\s$-field of characteristic zero  
and consider $K=k(z)$ as a $\fs$-field via $$\f(f(z))=f(z+1)\quad \text{and}\quad \s(z)=z.$$
Let $A\in\Gl_n(K)$ and  $i_0\geq 0$ be an integer such that $A(i)$ is well-defined and $\det(A(i))\neq 0$ for all $i\geq i_0$.  Define $Y\in\Gl_n(\seq_k)$ by 
$$Y(i_0)=\id\quad \text{and}\quad Y(i)=A(i-1)Y(i-1),\quad i>i_0.$$ Then $Y$ is a fundamental solution matrix for $\f(y)=Ay$ and $$K{\left\{Y,1/\det(Y)\right\}}_\s\subset\seq_k$$ is a $\s$-\PV\ ring for $\f(y)=Ay$. Moreover, there exists a $\s$-\PV\ extension for $\f(y)=Ay$.
\end{prop}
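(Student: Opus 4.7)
The first claim is a direct calculation. Since $\phi$ acts on $\seq_k$ by shifting the index, $\phi(Y)(i)=Y(i+1)=A(i)Y(i)=(AY)(i)$, so $\phi(Y)=AY$. The recursion yields $\det(Y)(i)=\det(A(i-1))\cdot\ldots\cdot\det(A(i_0))\ne 0$ for $i\Ge i_0$, making $\det(Y)$ a unit in $\seq_k$, and hence $Y\in\Gl_n(\seq_k)$.

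To prove that $R:=K\{Y,1/\det(Y)\}_\sigma\subseteq\seq_k$ is a $\sigma$-\PV\ ring, the plan is to exhaust $R$ by the $K$-$\phi$-subalgebras
\[R_d:=K\big[Y,\sigma(Y),\ldots,\sigma^d(Y),1/\det(Y\cdot\sigma(Y)\cdots\sigma^d(Y))\big]\subseteq\seq_k, \qquad d\Ge 0,\]
and reduce everything to showing that each $R_d$ is $\phi$-simple. By Lemma~\ref{lemma:higherorder}, $R_d=K[Y_d,1/\det(Y_d)]$, where $Y_d$ is the block-diagonal fundamental solution matrix of $\phi(y)=A_dy$, so $\phi$-simplicity identifies $R_d$ as a classical \PV\ ring for $\phi(y)=A_dy$. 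Granting this, a standard filtered-union argument gives $\phi$-simplicity of $R$: a $\phi$-ideal $I\subsetneq R$ meets each $R_d$ in a proper $\phi$-ideal of $R_d$, which must be $0$, so $I=0$. Moreover, Corollary~\ref{cor: bound period} supplies a uniform bound on $\per(R_d)$, and this bound transfers to $R$ to yield the $\phi$-pseudo $\sigma$-domain property. Combined, these show that $R$ is a $\sigma$-\PV\ ring for $\phi(y)=Ay$.

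The main obstacle is therefore the $\phi$-simplicity of each $R_d$. One already gets $R_d^\phi=k=K^\phi$ for free from the inclusion $R_d\subseteq\seq_k$ and $\seq_k^\phi=k$. The plan is to reduce to the case where the $\phi$-constants are algebraically closed, via faithfully flat base change. Setting $\bar K:=\bar k(z)$, the base change $R_d\otimes_K\bar K$ is naturally a $\bar K$-$\phi$-algebra generated by $Y_d$, and the classical construction of \PV\ rings inside sequence rings over an algebraically closed constant field (cf.~\cite[Prop.~4.1]{SingerPut:difference}), applied to $\phi(y)=A_dy$ over $\bar K$, exhibits it as the $\phi$-simple $\bar K$-subalgebra of $\seq_{\bar k}$ generated by $Y_d$. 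Since $\bar K|K$ is faithfully flat, any non-trivial $\phi$-ideal $I$ of $R_d$ extends to a non-trivial proper $\phi$-ideal of $R_d\otimes_K\bar K$, contradicting $\phi$-simplicity of the latter. Hence $R_d$ is $\phi$-simple.

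Finally, since $R^\phi=k=K^\phi$, Proposition~\ref{prop: compare sigmaPVring simgaPVextension}(ii) yields that $\Q(R)$ is a $\sigma$-\PV\ extension for $\phi(y)=Ay$, establishing the final assertion of the proposition.
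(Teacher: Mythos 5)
Your overall architecture coincides with the paper's: verify $\f(Y)=AY$ and the invertibility of $\det(Y)$ directly, reduce $\f$-simplicity of $R$ to $\f$-simplicity of the classical pieces $R_d$, obtain the $\f$-pseudo domain property from the uniform period bound of Corollary~\ref{cor: bound period}, and pass to $\Q(R)$ via Proposition~\ref{prop: compare sigmaPVring simgaPVextension}. The divergence is in how you prove that each $R_d$ is $\f$-simple, and that is where there is a gap. Your descent argument needs the natural map $R_d\otimes_K\bar K\to\seq_{\bar k}$ to be \emph{injective}, not merely to have $\f$-simple image: if it had a nonzero kernel $\mathfrak{n}$, then $\mathfrak{n}$ would itself be a proper nonzero $\f$-ideal of $R_d\otimes_K\bar K$, so the object you base-change to would not be $\f$-simple, and the contradiction you draw from $I\otimes_K\bar K$ being nonzero and proper would evaporate (nothing rules out $I\otimes_K\bar K\subseteq\mathfrak{n}$). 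You assert the identification of $R_d\otimes_K\bar K$ with the $\bar K$-subalgebra of $\seq_{\bar k}$ generated by $Y_d$ as if it were automatic, but it amounts to the linear disjointness of $\seq_k$ and $\bar k$ over $k$ inside $\seq_{\bar k}$, together with the observation that nonzero elements of $K\otimes_k\bar k$ become units in $\seq_{\bar k}$; this is true but requires an argument. Two smaller points: the classical construction over $\bar k$ produces \emph{some} fundamental matrix generating a $\f$-simple ring in $\seq_{\bar k}$, and to conclude that the subalgebra generated by \emph{your} $Y_d$ is that same ring you must invoke Remark~\ref{rem: fundamental solution matrix} together with $\seq_{\bar k}^\f=\bar k$, a step you leave implicit; and \cite[Prop.~4.1]{SingerPut:difference} is stated over $\C$, so you should note that it carries over to an arbitrary algebraically closed constant field of characteristic zero.

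The paper sidesteps all of this by citing \cite[Prop.~2.4, p.~4]{Wibmer:Skolem-Mahler-Lech}, which produces a \PV\ ring for $\f(y)=A_dy$ \emph{directly inside} $\seq_k$ with no hypothesis that $k$ be algebraically closed, and then identifies that ring with $R_d$ using Remark~\ref{rem: fundamental solution matrix} and $\seq_k^\f=k$. If you wish to keep the base-change route, first prove that $\seq_k\otimes_k\bar k\to\seq_{\bar k}$ is injective (a $\bar k$-linear relation $\sum\lambda_i a_i=0$ among sequences $a_i$ with entries in $k$ holds coordinatewise, so if the coordinate vectors of the $a_i$ span all of $k^m$ then $\lambda=0$, while otherwise the $a_i$ already satisfy a $k$-linear relation), and only then invoke faithful flatness.
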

\begin{proof}
It is clear that $Y$ is a fundamental solution matrix and that $$R:=K{\left\{Y,1/\det(Y)\right\}}_\s$$ is a $\fs$-ring. It remains to see that $R$ is a $\f$-simple $\f$-pseudo domain. To see that $R$ is $\f$-simple, it suffices to show that $$R_d:=K{\big[Y,\ldots,\s^d(Y),1/\det{\big(Y\cdots\s^d(Y)\big)}\big]}$$ is $\f$-simple for every $d\geq 0$.
Note that
\[Y_d=\left(\begin{array}{cccc}
 Y & 0 & \cdots & 0 \\
 0 & \s(Y) & \cdots & 0\\
\vdots & & \ddots & \vdots  \\
0 & \cdots & 0 &\s^d(Y)
\end{array}\right)\in\Gl_{n(d+1)}(R).
\]
is a fundamental solution matrix for $\f(y)=A_dy$ (cf. Lemma \ref{lemma:higherorder}.) By \cite[Prop. 2.4, p. 4]{Wibmer:Skolem-Mahler-Lech}, there exists a \PV\ ring $S_d$ for $\f(y)=A_dy$ over $K$ inside $\seq_k$. As $\seq_k^\f=k$ and two fundamental solution matrices for the same equation only differ by multiplication by a matrix with constant entries, it follows that $R_d=S_d$. In particular, $R_d$ is $\f$-simple.
It follows from Corollary \ref{cor: bound period} that $R$ is a $\f$-pseudo domain. 

As $\seq_k^\f=k=K^\f$, Proposition \ref{prop: compare sigmaPVring simgaPVextension} implies that $\quot(R)$ is a $\s$-PV extension for $\f(y)=Ay$.
\end{proof}

\begin{rem}
Let $Y\in\Gl_n(\seq_k)$ be defined as in Proposition \ref{prop:special existence}. It is unclear whether or not $K\langle Y\rangle_\s\subset\seq_k$ (see~\eqref{eq:defanglebrackets}) is a $\s$-\PV\ extension for $\f(y)=Ay$. The difficulty here is to know that a non-zero divisor of $K{\left\{Y,1/\det(Y)\right\}}_\s\subset\seq_k$ is a unit in $\seq_k$. This problem is closely related to the generalization of the Skolem--Mahler--Lech theorem to rational function coefficients (see \cite{Wibmer:Skolem-Mahler-Lech}). It follows from \cite[Cor. 3.4, p. 8]{Wibmer:Skolem-Mahler-Lech} that $K\langle Y\rangle_\s\subset\seq_k$ is a $\s$-\PV\ extension for $\f(y)=Ay$ if $A\in\Gl_n(k[z])$.
\end{rem}

\subsection{Uniqueness}\label{sec:uniqueness}
In this section, we will establish the uniqueness of $\s$-\PV\ rings and $\s$-\PV\ extensions (for a given equation $\f(y)=Ay$). In other words, we prove a result analogous to the classical uniqueness theorem (Theorem \ref{theo: uniqueness classical}).
The main difficulty is to understand what the $\s$-analogue of the algebraic closure in the classical case is. There is a notion of a difference closed difference field that has been used and studied extensively by model theorists (see, for example, \cite{Hrushovskietal:ModelTheoryofDifferencefields,Hrushovskietal:ModelTheoryofDifferenceFieldsIIPeriodicIdelas}).

\begin{defi}\label{defi: sclosed sfield}
A $\s$-field $k$ is called \emph{$\s$-closed} if for every finitely $\s$-generated $k$-$\s$-algebra $R$ which is a $\s$-domain, there exists a $k$-$\s$-morphism $R\to k$.
\end{defi}

In contrast to differential algebra, there appears to be no satisfactory notion of a $\s$-closure of a $\s$-field. Kolchin preferred the term ``constrainedly closed'' to ``differentially closed'' because a differentially closed differential field can have proper differential algebraic extensions.
The following definition can be seen as an adaptation of Kolchin's notion of constrained extensions of differential fields (\cite{Kolchin:constrainedExtensions}) to difference algebra.

\begin{defi} \label{defi: constrained}
Let $L|K$ be an extension of $\s$-pseudo fields. We say that $L$ is \emph{constrained} over $K$ if, for every finite tuple $a$ from $L$, there exists a non-zero divisor $b\in L$ such that $(0)$  is the only $\s$-pseudo prime ideal of $K\{a,1/b\}_\s$.
\end{defi}

The basic properties of constrained extensions of $\s$-pseudo fields have been established in \cite[\S2.1]{Wibmer:Chevalley}. The relation to $\s$-closed $\s$-fields is given by the fact that a $\s$-closed $\s$-field does not have proper constrained $\s$-field extensions. More generally, every finitely $\s$-generated $\s$-pseudo field extension of a $\s$-closed $\s$-field $k$ is of the form
$k\oplus\cdots\oplus k$ (see \cite[Ex.~2.8, p.~1388]{Wibmer:Chevalley}).

The following theorem is the crucial tool from difference algebra for proving our uniqueness result. It can be seen as a difference analogue of a theorem of Chevalley. For a prime ideal $\q$ in a $\s$-ring $R$ and $r \in R$, we write $$r\notin_\s\q$$ if $\s^d(r)\notin\q$ for every $d\geq 0$.

\begin{theo} \label{theo:sigmaChevalley}
Let $R\subset S$ be an inclusion of $\s$-rings such that $S$ is finitely $\s$-generated over $R$. Assume that $R$ is a $\s$-domain and   $(0) \subset S$ is a finite intersection of $\s$-pseudo prime ideals. Then there exist  $0\ne r\in R$ and an integer $l\geq 1$ such that, for every $d\geq 1$ and  $\s^d$-prime ideal $\q$ of $R$ with $r\notin_\s\q$, there exists a $\s^{ld}$-prime ideal $\q'$ of $S$ with $\q'\cap R=\q$.
\end{theo}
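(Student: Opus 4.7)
My plan is to reduce the statement, via the $\sigma$-pseudo prime decomposition of $(0)_S$, to a clean $\sigma$-equivariant ``going-up'' lemma for inclusions of difference domains, and then prove the latter by combining classical generic flatness at a single finite level with a uniform period bound in the spirit of Corollary~\ref{cor: bound period}.

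First, write $(0)_S = \mathfrak{p}_1 \cap \cdots \cap \mathfrak{p}_m$ with each $\mathfrak{p}_i = \bigcap_{j=0}^{l_i-1} \sigma^{-j}(\mathfrak{Q}_i)$ for some $\sigma^{l_i}$-prime $\mathfrak{Q}_i \subset S$. Since $R$ is an integral domain and the contraction of $\bigcap_{i,j}\sigma^{-j}(\mathfrak{Q}_i)$ to $R$ is the prime ideal $(0)$, some individual $\sigma^{-j_0}(\mathfrak{Q}_{i_0})$ contracts to $(0)$ as well; call it $\mathfrak{Q}$, set $l_1 := l_{i_0}$, $\tau := \sigma^{l_1}$, and $T := S/\mathfrak{Q}$. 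Then $R \hookrightarrow T$ is an inclusion of $\tau$-domains with $T$ finitely $\tau$-generated over $R$. A $\sigma^d$-prime ideal of $R$ is automatically $\tau^d$-prime; $r \notin_\sigma \mathfrak{q}$ gives $r \notin_\tau \mathfrak{q}$; and a $\tau^{l_2 d}$-prime of $T$ pulls back to a $\sigma^{l_1 l_2 d}$-prime of $S$ (because $\mathfrak{Q}$ is $\sigma^{l_1 l_2 d}$-stable). So it suffices to prove: \emph{for some $0 \neq r \in R$ and some $l_2 \geq 1$, every $\tau^d$-prime $\mathfrak{q}$ of $R$ with $r \notin_\tau \mathfrak{q}$ is the contraction of a $\tau^{l_2 d}$-prime of $T$.} The theorem then follows with $l := l_1 l_2$.

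To establish this lemma, write $T = R\{t_1,\ldots,t_n\}_\tau$ and filter by $T_j := R[\tau^i(t_k) : 0 \leq i \leq j,\ 1 \leq k \leq n]$, so $T = \bigcup_j T_j$ and each $T_j$ is a finitely generated $R$-subalgebra of the integral domain $T$. Classical generic freeness applied to the finite-type inclusion of domains $R \hookrightarrow T_0$ produces $0 \neq r \in R$ with $(T_0)_r$ free over $R_r$. Given a $\tau^d$-prime $\mathfrak{q}$ of $R$ with $r \notin_\tau \mathfrak{q}$, invert the $\tau$-orbit of $r$ in both rings, reduce modulo $\mathfrak{q}$, and extend scalars to the $\tau^d$-field $\bar K := \mathrm{Frac}(R/\mathfrak{q})$; this yields a nonzero $\tau^d$-algebra $\bar T$ over $\bar K$, the nonvanishing coming from faithful flatness at level $0$. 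By Zorn's lemma, $\bar T$ has a $\tau^d$-maximal $\tau^d$-ideal $\mathfrak{m}$, and $\bar T/\mathfrak{m}$ is $\tau^d$-simple. If $\bar T/\mathfrak{m}$ has only finitely many minimal primes, then $\tau^d$ permutes them in orbits, every orbit has a common size $k$, and each minimal prime is a $\tau^{kd}$-stable prime, whose contraction to $T$ supplies the desired $\tau^{kd}$-prime lying over $\mathfrak{q}$.

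The main obstacle is the uniform bound $k \leq l_2$ on these orbit sizes, independent of $\mathfrak{q}$ and $d$. Without it, one cannot extract a single $l$ valid for all $d$, and in general a $\tau^d$-simple ring need not possess any $\tau^d$-prime ideal. I would produce the bound by analyzing the images of the finite-level subrings $(T_j)_r$ in $\bar T/\mathfrak{m}$: each such image is a $\tau^d$-simple finitely generated $\bar K$-algebra, and, up to an extension of $\tau^d$-constants, it is a Picard--Vessiot-type ring for the linear $\tau$-equation assembled from the $\tau$-conjugates of the defining relations of the $t_k$'s over $R$. Corollary~\ref{cor: bound period}, applied level by level, then bounds the periods of these $\tau^d$-simple quotients uniformly in $j$, and passing the bound through the directed union yields the required global $l_2$. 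The delicate technical point is making the ``PV at each level'' description precise and checking that the period bound furnished by Corollary~\ref{cor: bound period} is genuinely uniform in $j$; once that is done, the argument closes.
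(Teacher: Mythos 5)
Your first step---locating a single prime component $\mathfrak{Q}$ of $(0)_S$ that is $\s^{l_1}$-prime and contracts to $(0)$ in $R$, and thereby reducing to a going-up statement for the inclusion of $\tau$-domains $R\subset S/\mathfrak{Q}$ with $\tau=\s^{l_1}$---is exactly the paper's proof, including the bookkeeping $l=l_1l_2$. The paper stops there: the remaining statement for an inclusion of $\s$-domains is precisely \cite[Thm.~1.15, p.~1384]{Wibmer:Chevalley}, which is cited rather than reproved. (The paper obtains $\mathfrak{Q}$ as a minimal prime of $S$ lying over $(0)\subset R$ via Bourbaki; your product-of-nonzero-elements argument for why some component must contract to $(0)$ is an equally valid way to get the same ideal.)

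The part of your proposal that goes beyond the paper---the attempted proof of the domain case---has two genuine gaps. First, generic freeness applied to $R\subset T_0$ only guarantees that $T_0\otimes_R\Q(R/\q)$ is nonzero after the localization; it says nothing about the higher levels $T_j\otimes_R\Q(R/\q)$, and a priori $1$ could lie in $\q T_j$ for some $j\geq 1$. This is exactly the ``prolongation'' difficulty the paper isolates in \S\ref{sec:existence} (and handles there, for the generic solution ring, via Lemma~\ref{lemma: prolongation}); the self-similarity $T_{j+1}=T_j\cdot\tau(T_j)$ does not promote flatness over $R$ from level $0$ to level $j$, because $\tau(T_0)$ is only controlled over $\tau(R)$, not over $R$. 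Second, your mechanism for the uniform bound on the periods cannot work: the subrings $T_j$ are not \PV\ rings for any linear $\tau$-system---the generators $t_k$ satisfy arbitrary algebraic difference relations over $R$, and there is no second operator $\f$ in sight---whereas Corollary~\ref{cor: bound period} applies only to \PV\ rings of the prolonged linear systems $\f(y)=A_dy$ over a $\fs$-field with $K^{\f^\infty}=K^\f$, and its proof runs through the generic solution ring of that linear system. The uniform period bound is not a technicality that ``closes'' the argument but is the essential content of \cite[Thm.~1.15, p.~1384]{Wibmer:Chevalley}, where it is obtained by quite different difference-algebraic means. If you intend to rely on that theorem, your reduction is complete and correct; if you intend to prove it, neither of these two steps is in place.
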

\begin{proof}
This is a slight generalization of \cite[Thm.~1.15, p.~1384]{Wibmer:Chevalley}, where it is assumed that $S$ is a $\s$-domain. There exists a minimal prime ideal $\widehat{\q}$ of $S$ with $\widehat{\q}\cap R =(0)$ \cite[Ch.~II, \S 2, Sec.~6, Prop.~16, p.~74]{Bourbaki:commutativealgebra}. By assumption, $\widehat{\q}$ is a $\s^{\widehat{d}}$-prime ideal for some $\widehat{d}\geq 1$. We can now apply \cite[Thm.~1.15, p.~1384]{Wibmer:Chevalley} to the inclusion
$R\subset S/\widehat{\q}$ of $\s^{\widehat{d}}$-domains to obtain $0\ne r \in R$ and an integer $\widehat{l}\geq 1$ such that, for every $\s^{d\widehat{d}}$-prime ideal $\q$ of $R$ with $r\notin_{\s^{\widehat{d}}}\q$, there exists a $\s^{\widehat{l}d\widehat{d}}$-prime ideal $\q'$ of $S/\widehat{\q}$ with $\q'\cap R=\q$. Set $l:=\widehat{d}\widehat{l}$. Observing that a $\s^{d}$-prime ideal is a $\s^{d\widehat{d}}$-prime ideal and that $r\notin_{\s}\q$ implies $r\notin_{\s^{\widehat{d}}}\q$ yields the claim of the theorem.
\end{proof}

We will need a few more preparatory results.

\begin{lemma} \label{lemma: stays linearly independent}
Let $k$ be an inversive $\s$-field and $R$ a $k$-$\s$-algebra with $\s\colon R\to R$ injective. If $(\lambda_i)$ is a family of $k$-linearly independent elements from $R$, then  the family $(\sigma(\lambda_i))$ is $k$-linearly independent as well. 
\end{lemma}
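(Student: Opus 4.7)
The plan is to reduce a hypothetical nontrivial $k$-linear relation among the $\s(\lambda_i)$ to a $k$-linear relation among the $\lambda_i$ themselves, by using surjectivity of $\s$ on $k$ (the inversive hypothesis) to pull the scalars back through $\s$, and then injectivity of $\s$ on $R$ to strip $\s$ off the overall equation.

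Concretely, I would start from a relation $\sum_i \mu_i \s(\lambda_i) = 0$ with $\mu_i \in k$ and only finitely many nonzero. Since $\s\colon k\to k$ is an automorphism, each $\mu_i$ can be written as $\mu_i = \s(\nu_i)$ for a unique $\nu_i \in k$. Because $R$ is a $k$-$\s$-algebra, the structure map $k\to R$ commutes with $\s$, so $\mu_i\s(\lambda_i) = \s(\nu_i)\s(\lambda_i) = \s(\nu_i\lambda_i)$. The original relation therefore rewrites as
\[
\s\Bigl(\sum_i \nu_i\lambda_i\Bigr) \;=\; \sum_i \s(\nu_i\lambda_i) \;=\; \sum_i \mu_i\s(\lambda_i) \;=\; 0.
\]
By the injectivity of $\s\colon R\to R$, this forces $\sum_i \nu_i\lambda_i = 0$. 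The $k$-linear independence of the family $(\lambda_i)$ then gives $\nu_i = 0$ for all $i$, and hence $\mu_i = \s(\nu_i) = 0$ for all $i$. This proves that $(\s(\lambda_i))$ is $k$-linearly independent.

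There is essentially no obstacle here: both hypotheses are used exactly once, and each in the inevitable way. The inversivity of $k$ is what lets the scalars $\mu_i$ be pulled inside $\s$, and the injectivity of $\s$ on $R$ is what lets $\s$ be cancelled from the resulting identity. Without either hypothesis the argument breaks down (and the conclusion genuinely fails), so the proof is as short as one could hope for.
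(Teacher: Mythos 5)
Your proof is correct and is exactly the paper's argument: use inversivity of $k$ to write each scalar as $\s(\nu_i)$, pull $\s$ out of the sum, cancel it by injectivity on $R$, and conclude from the independence of $(\lambda_i)$. Nothing to add.
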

\begin{proof}
If $\sum a_i\s(\lambda_i)=0$ with $a_i\in k$, then, as $k$ is inversive, we can find $b_i\in k$ with $\s(b_i)=a_i$. We have $\s(\sum b_i\lambda_i)=0$, and this implies $\sum b_i\lambda_i=0$. Therfore, the $b_i$'s and also the $a_i$'s are all zeroes.
\end{proof}

\begin{lemma} \label{lemma: stays injective}
Let $k$ be an inversive $\s$-field and $R$ a $k$-$\s$-algebra with $\s\colon R\to R$ injective. Then $\s\colon R\otimes_k K\to R\otimes_kK$ is injective for every $\s$-field extension $K$ of $k$. Moreover, if $\ida$ is a reflexive $\s$-ideal of $R$ (i.e., $\s^{-1}(\ida)=\ida$), then $\ida\otimes_k K$ is a reflexive $\s$-ideal of $R\otimes_kK$.
\end{lemma}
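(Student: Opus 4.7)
The plan is to fix a $k$-basis $(\lambda_i)_{i\in I}$ of $K$ and exploit the fact that $K$ is free over $k$. Then $R\otimes_k K$ becomes a free $R$-module with basis $(1\otimes \lambda_i)_{i\in I}$, so every element $x\in R\otimes_k K$ has a unique representation $x=\sum r_i\otimes\lambda_i$ with $r_i\in R$ almost all zero, and a subset of the form $\ida\otimes_k K$ (for any ideal $\ida$ of $R$) is identified with those $x$ whose coefficients $r_i$ all lie in $\ida$.

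For the injectivity assertion, I would take $x=\sum r_i\otimes\lambda_i$ with $\sigma(x)=\sum\sigma(r_i)\otimes\sigma(\lambda_i)=0$. By Lemma~\ref{lemma: stays linearly independent}, the family $(\sigma(\lambda_i))_{i\in I}$ is $k$-linearly independent in $K$, so I extend it to a $k$-basis of $K$. Expanding $\sigma(x)$ with respect to this extended basis and using uniqueness of representation forces $\sigma(r_i)=0$ for all $i$. Since $\sigma\colon R\to R$ is injective by hypothesis, $r_i=0$ for every $i$, and thus $x=0$.

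For the second assertion, the inclusion $\sigma(\ida\otimes_k K)\subseteq \ida\otimes_k K$ is immediate from $\sigma(\ida)\subseteq\ida$ applied on elementary tensors, so $\ida\otimes_k K$ is a $\sigma$-ideal. It remains to prove $\sigma^{-1}(\ida\otimes_k K)\subseteq \ida\otimes_k K$. Given $x=\sum r_i\otimes\lambda_i$ with $\sigma(x)\in\ida\otimes_k K$, I again extend $(\sigma(\lambda_i))$ to a $k$-basis of $K$ and express $\sigma(x)=\sum\sigma(r_i)\otimes\sigma(\lambda_i)$ in this basis. The coefficient-wise criterion for membership in $\ida\otimes_k K$ (valid because $K/k$ is free) forces $\sigma(r_i)\in\ida$ for all $i$. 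The hypothesis that $\ida$ is reflexive, $\sigma^{-1}(\ida)=\ida$, then yields $r_i\in\ida$ and hence $x\in\ida\otimes_k K$.

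The only non-trivial ingredient is the preservation of linear independence under $\sigma$, which is precisely what Lemma~\ref{lemma: stays linearly independent} provides via the inversivity of $k$. Once that is in hand, both parts reduce to the coefficient-wise book-keeping made possible by freeness of $K$ over $k$, so I do not anticipate any real obstacle beyond organizing the basis extension cleanly.
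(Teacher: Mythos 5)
Your proof is correct and follows essentially the same route as the paper: both the injectivity and the reflexivity assertions rest on Lemma~\ref{lemma: stays linearly independent} combined with the freeness of $K$ over $k$ and the resulting uniqueness of coefficients. The only cosmetic difference is that the paper obtains the second assertion by applying the injectivity result to $R/\ida$ (reflexivity of $\ida$ makes $\s$ injective on the quotient), whereas you redo the coefficient-wise bookkeeping directly with the ideal; the two arguments are equivalent.
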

\begin{proof}
Let $(\lambda_i)$ be a $k$-basis of $K$ and $s=\sum r_i\otimes\lambda_i \in R\otimes_k K$ with $\s(s)=0$. Then $\sum \sigma(r_i)\otimes\sigma(\lambda_i)=0$ implies $\sigma(r_i)=0$, because the family $(\s(\lambda_i))$ is $k$-linearly independent by Lemma \ref{lemma: stays linearly independent}. Since $\s$ is injective on $R$, $s=0$.
The latter claim of the lemma follows by applying the above result to $R/\ida$.
\end{proof}

\begin{prop} \label{prop:new phi constants are sigma constrained}
Let $K$ be a $\fs$-field such that $K^{\f^\infty}=K^\f$ and $\s\colon K^\f\to K^\f$ is surjective. Let $R$ be a $\f$-simple $K$-$\fs$-algebra that is a $\f$-pseudo domain and finitely $\s$-generated over $K$. Then $R^\f$ is a finitely $\s$-generated constrained $\s$-field extension of $K^\f$.
\end{prop}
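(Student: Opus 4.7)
Let $L := \Q(R)$, which is a $\fs$-pseudo field by Lemma \ref{lemma:finite intersection of sigmapseudoprime ideals}, and note $R^\f = L^\f$ by Lemma \ref{lemma: constants of fsimple ring}. Since the $\f$-constants of any $\f$-simple $\f$-ring form a field, and the commuting $\s$ on $L$ restricts to $L^\f$, the ring $R^\f = L^\f$ is a $\s$-field. The proof then consists of two parts: (a) $R^\f$ is finitely $\s$-generated over $K^\f$, and (b) $R^\f | K^\f$ is constrained.

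For (a), I would follow the template of the proof of Lemma \ref{lemma:bound period}. Since $R = K\{y_1, \ldots, y_n\}_\s$ is finitely $\s$-generated, $L = K\langle y_1, \ldots, y_n\rangle_\s$ is finitely $\s$-generated over $K$ as a $\fs$-pseudo field. Applying an appropriate version of Levin's theorem on intermediate $\s$-(pseudo) fields (as cited in Lemma \ref{lemma:bound period}), the intermediate $\fs$-pseudo field $KL^\f \subset L$ is finitely $\s$-generated over $K$, so $KL^\f = K\langle a_1, \ldots, a_m\rangle_\s$ for some $a_i \in L^\f$. Then the linear-disjointness argument of Lemma \ref{lemma:bound period} goes through verbatim: fix a $K^\f$-basis $(b_i)$ of $K^\f\langle a_1, \ldots, a_m\rangle_\s$, and for any $a \in L^\f$ write $a = (\sum \lambda_i b_i)/(\sum \mu_i b_i)$ with $\lambda_i, \mu_i \in K$; the resulting equality $\sum \mu_i\cdot a b_i = \sum \lambda_i b_i$ is a nontrivial $K$-linear dependence among the $K^\f$-linearly independent family $\{a b_i, b_j\}$ in $L^\f$. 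By Lemma \ref{lemma: linearly disjoint}, one extracts a nontrivial $K^\f$-linear dependence with coefficients $\mu_i', \lambda_i' \in K^\f$; since $(b_i)$ are $K^\f$-linearly independent we must have $\sum \mu_i' b_i \neq 0$, and then $a = (\sum \lambda_i' b_i)/(\sum \mu_i' b_i) \in K^\f\langle a_1, \ldots, a_m\rangle_\s$. Hence $R^\f = K^\f\langle a_1, \ldots, a_m\rangle_\s$ is finitely $\s$-generated.

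For (b), fix a finite tuple $a = (a_1, \ldots, a_n)$ in $R^\f$ and let $A := K^\f\{a\}_\s \subset R^\f$. By (a), $R^\f$ is finitely $\s$-generated over $A$; $A$ is a $\s$-domain (being a subring of the $\s$-field $R^\f$); and $(0) \subset R^\f$ is $\s$-pseudo prime. Theorem \ref{theo:sigmaChevalley} applied to $A \subset R^\f$ yields $0 \neq r \in A$ and $l \geq 1$ such that for every $d \geq 1$ and every $\s^d$-prime $\q$ of $A$ with $r \notin_\s \q$, there is a $\s^{ld}$-prime $\q'$ of $R^\f$ with $\q' \cap A = \q$; since $R^\f$ is a field this forces $\q' = (0)$ and hence $\q = (0)$. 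Taking $b := r$ and $B := K^\f\{a, 1/b\}_\s$, observe that $B$ is simply the localization of $A$ at the multiplicative set $\{\s^j(r)^k : j, k \geq 0\}$, so by the standard correspondence of prime ideals under localization, $\s^d$-primes of $B$ pull back bijectively to $\s^d$-primes $\q$ of $A$ with $r \notin_\s \q$, hence to $\q = (0)$. Thus $(0)$ is the only $\s^d$-prime of $B$ for every $d$, and therefore the only $\s$-pseudo prime of $B$ (as every such is of the form $\bigcap_i \s^{-i}(\q)$ for some $\s^d$-prime $\q$). This witnesses constrainedness with $b = r$.

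The main hurdle lies in Step (a): the cited Levin-type intermediate-field theorem is stated for $\s$-field extensions, whereas $L$ is only a $\fs$-pseudo field in general. Its adaptation to this setting is feasible via the primitive-idempotent decomposition $L = e_1 L \oplus \cdots \oplus e_d L$, reducing to a $\s^m$-field case (where $m$ is the order of the permutation $\s$ induces on $\{e_i\}$) and descending back. The remainder is a straightforward assembly: the linear-disjointness technique from Lemma \ref{lemma:bound period} together with Theorem \ref{theo:sigmaChevalley} and the standard localization--prime correspondence suffice.
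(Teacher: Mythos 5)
Your part (a) is essentially the paper's argument: the paper also reduces to Levin's theorem on intermediate difference fields and then runs the linear-disjointness computation of Lemma \ref{lemma:bound period} to descend from $KR^\f=K\langle a_1,\ldots,a_m\rangle_\s$ to $R^\f=K^\f\langle a_1,\ldots,a_m\rangle_\s$. The technical hurdle you flag (Levin's theorem is for $\s$-fields, not $\f$-pseudo $\s$-fields) is resolved in the paper not by decomposing $L$ into idempotent pieces but by passing to $R/\q$ for a minimal prime $\q$, which is simultaneously $\f^d$-prime and $\s^d$-prime, embedding $KR^\f$ into the honest $\s^d$-field $\Q(R/\q)$ via $\f^d$-simplicity of $K\otimes_{K^\f}R^\f$, and applying Levin's theorem there. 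Either route should work, but yours is left as a sketch.

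Part (b) has a genuine gap. Theorem \ref{theo:sigmaChevalley} requires $S$ to be finitely $\s$-generated over $R$ \emph{as a $\s$-algebra}, i.e.\ $S=R\{s_1,\ldots,s_n\}_\s$. What part (a) gives you is that $R^\f$ is finitely $\s$-generated over $A=K^\f\{a\}_\s$ as a $\s$-\emph{field} extension; a field finitely generated as a field extension is in general not finitely generated as an algebra over a subring (already $\QQ(x)$ over $\QQ[x]\ldots$ fails without difference structure). So the theorem does not apply to $A\subset R^\f$, and the step ``there is a $\s^{ld}$-prime $\q'$ of $R^\f$ with $\q'\cap A=\q$, hence $\q=(0)$'' is unsupported. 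A telling symptom: your argument for (b) never uses the $\f$-structure or the $\f$-simplicity of $R$ at all; if it were valid it would prove that \emph{every} finitely $\s$-generated $\s$-field extension $E|k$ is constrained, which is false (e.g.\ $k\langle x\rangle_\s$ with $x$ $\s$-transcendental over a $\s$-closed $k$ is not constrained, since $\s$-closed fields admit no proper constrained extensions, yet $k\{x,1/b\}_\s$ always has plenty of nonzero $\s$-pseudo prime ideals). The paper avoids this by applying Theorem \ref{theo:sigmaChevalley} to the inclusion $K\{a\}_\s\subset R$, where the finite-generation hypothesis genuinely holds; it then writes the resulting element $r$ of $K\{a\}_\s=K\otimes_{K^\f}K^\f\{a\}_\s$ as $\sum\lambda_i\otimes a_i$ with the $\lambda_i$ linearly independent over $K^\f$, takes $b$ to be a nonzero $a_i$, checks via Lemmas \ref{lemma: stays linearly independent} and \ref{lemma: stays injective} that $b\notin_\s\q$ forces $r\notin_\s K\otimes\q$, lifts $K\otimes\q$ to a prime $\q''$ of $R$, and finally uses $\f$-simplicity: $\q''\supset\q R$, and $\q R$ is a $\f$-ideal because $\q$ consists of $\f$-constants, so $\q R=(0)$ and $\q=(0)$. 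That last use of $\f$-simplicity is the essential ingredient your proposal is missing.
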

\begin{proof} We set $k=K^\f$. The assumption $K^{\f^\infty}=K^\f$ means that $k$ is relatively algebraically closed in $K$. We also know that $K$ is separable over $k$ \cite[Cor.~1.4.16, p.~16]{Wibmer:thesis}. Thus, $K$ is a regular field extension of $k$.
Let $c$ be a finite tuple with coordinates in $R^\f$. Then $$K\{c\}_\s=K\otimes_kk\{c\}_\s$$ is an integral domain, because $k\{c\}_\s$ is contained in the field $R^\f$ and $K$ is regular over $k$. Moreover, $(0)\subset R$ is a finite intersection of $\s$-pseudo prime ideals of $R$ by Lemma \ref{lemma:finite intersection of sigmapseudoprime ideals}. We can thus apply Theorem \ref{theo:sigmaChevalley} to the inclusion $K\{c\}_\s\subset R$ to find  $0\ne r\in K\{c\}_\s$ and an integer $l\geq 1$ such that every $\s^d$-prime ideal $\q'$ of $K\{c\}_\s$ with $r\notin_\s\q'$ lifts to a $\s^{ld}$-prime ideal of $R$.
We may write
\[r=\lambda_1\otimes a_1+\cdots+\lambda_m\otimes a_m\in K\otimes_kk\{c\}_\s=K\{c\}_\s\]
with the $\lambda_i$'s linearly independent over $k$. Let $b\in k\{c\}_\s$ denote one of the non-zero $a_i$'s.
We will show that $k{\left\{c,1/b\right\}}_\s$ has no $\s$-pseudo prime ideals other than $(0)$. Let $\q$ be a $\s^d$-prime ideal of $k\{c\}_\s$ with $b\notin_\s\q$ (for some $d\geq 1$). We have to show that $\q = (0)$.

Since $K$ is a regular field extension of $k$, $\q':=K\otimes\q$ is a prime ideal of $K\otimes_kk\{c\}_\s$.
It follows from Lemma \ref{lemma: stays injective} that $\q'$ is a $\s^d$-prime ideal of $K\otimes_kk\{c\}_\s$.
We claim that $r\notin_\s\q'$. Suppose the contrary. Then $\s^n(r)\in\q'$ for some $n\geq 1$. By Lemma \ref{lemma: stays linearly independent}, the family $(\s^n(\lambda_i))$ is linearly independent over $k$. By considering the image of $\s^n(r)$ in
\[(K\otimes_k k\{c\}_\s)/\q'=K\otimes_k(k\{c\}_\s/\q),\]
we see that this implies $\s^n(b)\in\q$. This contradicts $b\notin_\s\q$. Therefore, $r\notin_\s\q'$.
By the construction of $r$, this implies the existence of a $\s^{ld}$-prime ideal $\q''$ of $R$ with $$\q''\cap K\{c\}_\s=\q'.$$ In particular, $\q''\supset \q R$. But, since the elements of $\q$ are $\f$-constants, $\q R$ is a $\f$-ideal. Since $R$ is $\f$-simple, we must have $\q R=(0)$. So, also $\q=(0)$ as desired.

It remains to see that $R^\f$ is finitely generated as a $\s$-field extension of $k=K^\f$.
Let $\q$ be a minimal prime ideal of $R$. Then there exists $d\geq 1$ such that $\q$ is $\f^d$-prime and $\s^d$-prime (Lemma \ref{lemma:finite intersection of sigmapseudoprime ideals}).
Since $R$ is finitely generated as $K$-$\s$-algebra, we see that $R/\q$ is finitely generated as $K$-$\s^d$-algebra. So, $\Q(R/\q)$ is finitely generated as $\s^d$-field extension of $K$. As $k=K^{\f^d}$ by assumption, it follows from Lemma \ref{lemma: stays simple after constant extension} that $K\otimes_kR^\f$ is $\f^d$-simple. Therefore, the canonical map $$K\otimes_kR^\f=K\cdot R^\f\to \Q(R/\q)$$ is injective, and we can think of $KR^\f=\Q(K\cdot R^\f)$ as a $\s^d$-subfield of $\Q(R/\q)$. By \cite[Thm.~4.4.1, p.~292]{Levin:difference}, every intermediate difference field of a finitely generated difference field extension is finitely generated. Therefore, 
$KR^\f$ is finitely generated as a $\s^d$-field extension of $K$. A fortiori, $KR^\f$ is finitely generated as $\s$-field extension of $K$.
We can, therefore, find  $a_1,\ldots,a_m\in R^\f$ such that
$$KR^\f=K\langle a_1,\ldots, a_m\rangle_\s.$$ So, $$\Q{\left(K\otimes_{k}R^\f\right)}=\Q{\left(K\otimes_{k}k\langle a_1,\ldots,a_m\rangle_\s\right)}.$$ As $K\otimes_{k}R^\f$ and $K\otimes_{k}K^\f\langle a_1,\ldots,a_m\rangle_\s$ are $\f$-simple (Lemma \ref{lemma: stays simple after constant extension}), it follows from Lemma \ref{lemma: constants of fsimple ring} that
$$R^\f=\Q{\left(K\otimes_{k}R^\f\right)}^\f=\Q{\left(K\otimes_{k}k\langle a_1,\ldots,a_m\rangle_\s\right)}^\f=k\langle a_1,\ldots,a_m\rangle_\s.\let\qedsymbol\openbox\qedhere
$$
\end{proof}

\begin{cor} \label{cor: no new constants}
Let $K$ be a $\fs$-field and $R$ a $\s$-\PV\ ring over $K$ with $K^\f$ being a $\s$-closed $\s$-field.
Then $R^\f=K^\f$.
\end{cor}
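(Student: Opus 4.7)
The plan is to reduce the claim to Proposition~\ref{prop:new phi constants are sigma constrained} and then invoke the key structural property of $\s$-closed $\s$-fields. Write $k := K^\f$ throughout.

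First I would verify that the hypotheses of Proposition~\ref{prop:new phi constants are sigma constrained} are met under the sole assumption that $k$ is $\s$-closed. A $\s$-closed $\s$-field $k$ is algebraically closed: any nontrivial finite algebraic extension $k'/k$, regarded as a finitely $\s$-generated $\s$-domain over $k$ with the unique $\s$-structure extending that of $k$, would admit a $k$-$\s$-morphism $k' \to k$ by Definition~\ref{defi: sclosed sfield}, forcing $k' = k$. Since the relative algebraic closure of $k$ in $K$ equals $K^{\f^\infty}$, this yields $K^{\f^\infty} = K^\f$. Similarly, $k$ must be inversive: given $a \in k$, the $k$-$\s$-algebra $k\{y\}_\s/(\s(y)-a)$ is a polynomial ring in the algebraically independent elements $y,\s^2(y),\s^3(y),\ldots$ and therefore a $\s$-domain, so Definition~\ref{defi: sclosed sfield} produces $b\in k$ with $\s(b)=a$.

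Next I would apply Proposition~\ref{prop:new phi constants are sigma constrained}. A $\s$-\PV\ ring $R$ over $K$ is, by definition, $\f$-simple, a $\f$-pseudo $\s$-domain, and finitely $\s$-generated over $K$ (being $\s$-generated by a fundamental solution matrix together with the inverse of its determinant). Hence the proposition applies and yields that $R^\f$ is a finitely $\s$-generated constrained $\s$-field extension of $k$.

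Finally, I would invoke the fact recorded after Definition~\ref{defi: constrained} (and proved in \cite[\S2.1]{Wibmer:Chevalley}) that a $\s$-closed $\s$-field admits no proper constrained $\s$-field extensions. Combining this with the previous step forces $R^\f = k = K^\f$. The only substantive difficulty in this chain of reasoning was already handled inside Proposition~\ref{prop:new phi constants are sigma constrained}; given that tool, the corollary reduces to two short checks of the universal property of $\s$-closedness plus the basic fact about constrained extensions of $\s$-closed $\s$-fields.
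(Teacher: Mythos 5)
Your reduction is exactly the paper's: check the hypotheses of Proposition~\ref{prop:new phi constants are sigma constrained}, conclude that $R^\f$ is a constrained $\s$-field extension of $k$, and invoke the fact that a $\s$-closed $\s$-field has no proper constrained extensions. The only place where you go beyond the paper is in actually proving that $\s$-closed implies algebraically closed and inversive (the paper asserts this without proof), and it is precisely there that your argument has a flaw.

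For inversiveness, the object $k\{y\}_\s/(\s(y)-a)$ does not do what you claim. If $(\s(y)-a)$ denotes the ordinary ideal, it is not a $\s$-ideal ($\s(\s(y)-a)=\s^2(y)-\s(a)$ is not a multiple of $\s(y)-a$), so the quotient carries no $\s$-structure. If it denotes the $\s$-ideal generated by $\s(y)-a$, the quotient is $k[y]$ with $\s(y)=a$ and $\s^i(y)=\s^{i-1}(a)\in k$ for $i\ge1$ --- not a polynomial ring in $y,\s^2(y),\ldots$ --- and $\s$ maps all of $k[y]$ into $k$, so it is injective only when $a$ is transcendental over $\s(k)$; in particular, whenever $a\in\s(k)$ (i.e.\ exactly when the desired conclusion holds) the ring fails to be a $\s$-domain, so Definition~\ref{defi: sclosed sfield} cannot be applied to it. The standard fix is to realize the solution inside an honest $\s$-field: let $k^*$ be the inversive closure of $k$ and $b\in k^*$ with $\s(b)=a$; then $k\{b\}_\s=k[b]\subset k^*$ is a finitely $\s$-generated $k$-$\s$-algebra that \emph{is} a $\s$-domain, and a $k$-$\s$-morphism $k[b]\to k$ sends $b$ to a preimage of $a$ under $\s$ in $k$. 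The algebraic-closedness check has the same (milder) issue: an arbitrary finite extension $k'/k$ need not admit any endomorphism extending $\s$, let alone a unique one; one should instead extend $\s$ to the algebraic closure $\overline{k}$ and apply $\s$-closedness to $k\{\alpha\}_\s\subset\overline{k}$ for $\alpha$ algebraic over $k$. With these repairs the proof is complete and coincides with the paper's.
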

\begin{proof}
Since a $\s$-closed $\s$-field is algebraically closed and inversive, the hypotheses of Proposition \ref{prop:new phi constants are sigma constrained} are met, and it follows that $R^\f$ is a constrained $\s$-field extension of $k$. By \cite[Ex.~2.8, p.~1388]{Wibmer:Chevalley}, a $\s$-closed $\s$-field cannot have a proper constrained $\s$-field extension.
\end{proof}

\begin{lemma} \label{lemma: exists sigmapseudoprimeideal}
Let $K$ be a $\fs$-field and $R$, $R'$ $\s$-\PV\ rings over $K$. Then there exists a $\s$-pseudo prime ideal in $R\otimes_K R'$.
\end{lemma}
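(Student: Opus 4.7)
The strategy is to produce a $\fs$-ideal $\m \subset T := R \otimes_K R'$ such that $T/\m$ is a $\f$-pseudo $\s$-domain; Lemma~\ref{lemma:finite intersection of sigmapseudoprime ideals} then decomposes the zero ideal of $T/\m$ into a finite intersection of $\s$-pseudo prime ideals, and the preimage in $T$ of any one of them is the required $\s$-pseudo prime ideal.

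To build $\m$, I would follow the blueprint of the existence proof for $\s$-\PV\ rings (Theorem~\ref{theo: existence of PV ring}). Fix fundamental solution matrices $Y\in\Gl_n(R)$ and $Y'\in\Gl_{n'}(R')$ for the defining equations of $R$ and $R'$, and set $R_d:=K[Y,\s(Y),\ldots,\s^d(Y),\det^{-1}]$ and $R'_d$ analogously, so that $R=\bigcup_d R_d$ and $R'=\bigcup_d R'_d$ with each $R_d$, $R'_d$ a classical \PV\ ring by Lemma~\ref{lemma:higherorder}. Setting $T_d:=R_d\otimes_K R'_d$, one has $T=\bigcup_d T_d$ and $\s(T_d)\subset T_{d+1}$. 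I would then inductively build $\f$-maximal $\f$-ideals $\m_d\subset T_d$ satisfying
\[\m_d\cap T_{d-1}=\m_{d-1}\quad\text{and}\quad\s^{-1}(\m_d)=\m_{d-1}.\]
For the inductive step, given $\m_d$, the crucial point is to show that the ideal $\ida$ of $T_{d+1}$ generated by $\m_d\cup\s(\m_d)$ does not contain $1$; any $\f$-maximal $\f$-ideal of $T_{d+1}$ containing $\ida$ then serves as $\m_{d+1}$, with the compatibility conditions automatic from $\f$-maximality. To establish properness of $\ida$, I would lift the situation to the generic $\s$-polynomial solution rings $S,S'$ (so that $R=S/\mathfrak{m}_R$ and $R'=S'/\mathfrak{m}_{R'}$ for suitable $\fs$-ideals), where $\widetilde{T}_{d+1}:=S_{d+1}\otimes_K S'_{d+1}$ decomposes as the tensor product over $K$ of $d{+}2$ isomorphic copies of $U_0:=K[X,X',\det(X)^{-1},\det(X')^{-1}]$, with $\s$ shifting the factors $U_i\mapsto U_{i+1}$. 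Lemma~\ref{lemma: prolongation} then applies to suitable lifts of $\m_d$ and $\s(\m_d)$; the required intersection condition $\widetilde{\m}_d\cap\s(\widetilde{T}_{d-1})=\s(\widetilde{\m}_d)\cap\s(\widetilde{T}_{d-1})=\s(\widetilde{\m}_{d-1})$ follows from the inductive hypothesis together with the injectivity of $\s$ on $\widetilde{T}_d$.

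Setting $\m:=\bigcup_d\m_d$ gives a $\f$-maximal $\fs$-ideal of $T$. Each $T_d/\m_d$ is a classical \PV\ ring for the combined equation $\f(y)=\operatorname{diag}(A_d,A'_d)\,y$ over $K$, and Corollary~\ref{cor: bound period} bounds $\per(T_d/\m_d)$ uniformly in $d$, which ensures $\m$ is $\f$-pseudo prime and hence $T/\m$ is a $\f$-pseudo $\s$-domain. The main obstacle is managing the interaction between the lifted ideals $\widetilde{\m}_d$ and the kernel $\widetilde{\mathfrak{n}}:=\ker(\widetilde{T}\twoheadrightarrow T)$: the filtration $\widetilde{\mathfrak{n}}\cap\widetilde{T}_d$ can be strictly increasing ($\widetilde{\mathfrak{n}}$ potentially acquiring new generators at each level not reachable by $\s$-shifts of lower-level relations), so ensuring that the proper ideal furnished by Lemma~\ref{lemma: prolongation} in $\widetilde{T}_{d+1}$ descends to a proper ideal in $T_{d+1}$ demands extra care beyond a direct appeal to the prolongation lemma.
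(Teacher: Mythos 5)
There is a genuine gap, and it is exactly the one you flag in your last sentence but do not resolve. Lemma~\ref{lemma: prolongation} lives in the \emph{free} tensor product $\widetilde{T}_{d+1}=V_0\otimes_K\cdots\otimes_K V_{d+1}$ of copies of $V_0=S_0\otimes_KS_0'$: it produces a point of $(Y\times X_{d+1})\cap(X_0\times Z)$, where $Y$ and $Z$ are the loci of the lifts $\widetilde{\m}_d$ and $\s(\widetilde{\m}_d)$. To obtain $\m_{d+1}\subset T_{d+1}=R_{d+1}\otimes_KR'_{d+1}$ you need that point to lie, in addition, on the closed subscheme cut out by $\widetilde{\mathfrak{n}}_{d+1}=\ker(\widetilde{T}_{d+1}\to T_{d+1})$. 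Since $\m_R\cap S_{d+1}$ is a $\f$-maximal $\f$-ideal of $S_{d+1}$, it generally contains relations at level $d+1$ that are \emph{not} generated by $\m_R\cap S_d$ and $\s(\m_R\cap S_d)$ (and likewise for $R'$), so $\widetilde{\mathfrak{n}}_{d+1}$ is strictly larger than what is already absorbed into $\langle\widetilde{\m}_d,\s(\widetilde{\m}_d)\rangle$, and the prolongation lemma gives no control whatsoever over whether adding these new relations keeps the ideal proper. In other words, the inductive step does not close: this is precisely the difficulty that makes the existence proof work only in the free solution ring, where there is no $\widetilde{\mathfrak{n}}$. A secondary problem is that your argument imports hypotheses the lemma does not have: Corollary~\ref{cor: bound period} needs $K^{\f^\infty}=K^\f$, and for $\s(\m_d)$ to be an ideal (rather than just a subset) of the next level you need $\s$ to be an automorphism, as in the first half of the proof of Theorem~\ref{theo: existence of PV ring}. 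The lemma is stated, and is needed, for an arbitrary $\fs$-field $K$.

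The paper's proof is entirely different and much shorter. Pass to the total quotient rings $L=\Q(R)$ and $L'=\Q(R')$: a $\s$-pseudo prime ideal of $L\otimes_KL'$ contracts to one of $R\otimes_KR'$. Each of $L$, $L'$ is a $\f$-pseudo $\s$-field and hence decomposes as a finite direct sum of $\s$-pseudo fields $f_i\cdot L$, resp.\ $f_j'\cdot L'$, each finitely $\s$-generated over $K$ (by the idempotent pieces of a fundamental solution matrix). Then $L\otimes_KL'=\bigoplus_{i,j}f_i\cdot L\otimes_Kf_j'\cdot L'$, and the compatibility theorem \cite[Thm.~1.2, p.~1375]{Wibmer:Chevalley} supplies a $\s$-pseudo prime ideal $\p$ of $f_1\cdot L\otimes_Kf_1'\cdot L'$; taking the direct sum of $\p$ with all the other summands gives the desired ideal. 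So the real content is outsourced to the difference-algebraic compatibility theorem rather than to a prolongation-type induction; if you want to keep your strategy, you would in effect have to reprove that compatibility theorem, and the missing step above is where its difficulty is concentrated.
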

\begin{proof}
We begin the proof with a general observation on $\f$-pseudo $\s$-fields. Let $L$ be a $\f$-pseudo $\s$-field. Then $L$ need not be a $\s$-pseudo field. However, if we write $$L=e_1\cdot L\oplus\cdots\oplus e_t\cdot L$$ as after Definition \ref{defi: fpseudofield}, then $\s$-permutes the $e_i$'s and it follows that $L$ is a finite direct sum (or product) of $\s$-pseudo fields (cf. Lemma \ref{lemma:finite intersection of sigmapseudoprime ideals}.) In other words, there are idempotent elements $f_1,\ldots,f_m\in L$ such that
\[L=f_1\cdot L\oplus\cdots\oplus f_m\cdot  L,\]
with the $f_i\cdot L$'s $\s$-pseudo fields.
Set $L=\Q(R)$ and $L':=\Q(R')$. It suffices to show that there exists a $\s$-pseudo prime ideal in $L\otimes_K L'$, because a $\s$-pseudo prime ideal of $L\otimes_K L'$ contracts to a $\s$-pseudo prime ideal of $R\otimes_K R'$.
As above, we can write $$L=f_1\cdot L\oplus\cdots\oplus f_m\cdot  L\quad \text{and}\quad L'=f'_1\cdot L'\oplus\cdots\oplus f'_{m'}\cdot  L'$$ with the $f_i\cdot L$'s and $f'_j\cdot L'$'s $\s$-pseudo fields. Then
\[L\otimes_K L'=\bigoplus\nolimits_{i,j}f_i\cdot L\otimes_K f_j'\cdot L'.\]
Note that the $f_i\cdot L$'s are finitely $\s$-generated as $\s$-pseudo field extensions of $K$. Indeed, if $Y\in\Gl_n(L)$ is a suitable fundamental solution matrix, then $f_i\cdot L=K\langle f_i\cdot Y\rangle_\s$.
Since $f_1\cdot L$ is finitely $\s$-generated over $K$, it follows from \cite[Thm.~1.2, p.~1375]{Wibmer:Chevalley} that there exists a $\s$-pseudo prime ideal $\p$ in $f_1\cdot L\otimes_K f_1'\cdot L'$. Then
\[\widetilde{\p}:=\p\bigoplus_{i,j \atop (i,j)\neq (1,1)}f_i\cdot L\otimes_K f_j'\cdot L'\]
is a $\s$-pseudo prime ideal of $L\otimes_K L'$.
\end{proof}

Finally we are prepared to prove our main uniqueness theorem:

\begin{theo}[Uniqueness of $\sigma$-\PV\ rings] \label{theo: uniqueness of sigmaPVrings}
Let $K$ be a $\fs$-field such that $K^\f=K^{\f^\infty}$ and $\s\colon K^\f\to K^\f$ is surjective. Let $R_1$ and $R_2$ be two $\s$-\PV\ rings over $K$ for the same equation $\f(y)=Ay$, $A\in\Gl_n(K)$. Then there exists a finitely $\s$-generated constrained $\s$-pseudo field extension $k'$ of $k:=K^\f$ containing $k_1:=R_1^\f$ and $k_2:=R_2^\f$ and an isomorphism of $K\otimes_k k'$-$\fs$-algebras between $R_1\otimes_{k_1}k'$ and $R_2\otimes_{k_2}k'$.
\end{theo}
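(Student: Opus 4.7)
The plan is to produce a $\fs$-ideal $\m$ of $S:=R_1\otimes_K R_2$ such that $T:=S/\m$ is $\f$-simple, is a $\f$-pseudo $\s$-domain, and the compositions $R_i\hookrightarrow S\to T$ are injective; granted such $\m$, set $k':=T^\f$. Since $T$ is finitely $\s$-generated over $K$ (inherited from $S$), Proposition~\ref{prop:new phi constants are sigma constrained} guarantees that $k'$ is a finitely $\s$-generated constrained $\s$-field extension of $k$, and the embeddings $R_i\hookrightarrow T$ show $k_i=R_i^\f\subset k'$. The images $\bar Y_i\in\Gl_n(T)$ are two fundamental solution matrices for $\f(y)=Ay$, so by Remark~\ref{rem: fundamental solution matrix} there is $\bar Z\in\Gl_n(T^\f)=\Gl_n(k')$ with $\bar Y_2=\bar Y_1\bar Z$. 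Lemma~\ref{lemma: linearly disjoint}, applied to the $\f$-simple subring $R_i\subset T$ and the $\f$-constant subring $k'\subset T$, makes $R_i$ and $k'$ linearly disjoint over $k_i$, so the natural map $R_i\otimes_{k_i}k'\to T$ is injective with image $R_i\cdot k'$. Since $\bar Z$ has entries in $k'$ and $\bar Y_2=\bar Y_1\bar Z$, the subalgebras $R_1\cdot k'$ and $R_2\cdot k'$ of $T$ coincide, yielding the required isomorphism $R_1\otimes_{k_1}k'\simeq R_2\otimes_{k_2}k'$ of $K\otimes_k k'$-$\fs$-algebras.

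The main obstacle is constructing $\m$. Put $S_d:=R_{1,d}\otimes_K R_{2,d}\subset S$, where $R_{i,d}:=K\big[Y_i,\ldots,\s^d(Y_i),1/\det\big(Y_i\cdots\s^d(Y_i)\big)\big]$ is a classical \PV\ ring over $K$ for $\f(y)=A_d y$ (Lemma~\ref{lemma:higherorder} and the proof of Proposition~\ref{prop: compare sigmaPVring simgaPVextension}). Mirroring the iterative construction from Theorem~\ref{theo: existence of PV ring}, I will build a sequence of $\f$-maximal $\f$-ideals $\m_d\subset S_d$ satisfying $\m_{d+1}\cap S_d=\m_d$ and $\s^{-1}(\m_{d+1})=\m_d$. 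At step zero, pick any $\f$-maximal $\f$-ideal $\m_0\subset S_0$ (exists by Zorn). For the inductive step, $\m_d\cap\s(S_{d-1})=\s(\m_{d-1})=\s(\m_d)\cap\s(S_{d-1})$ by the inductive hypotheses, so Lemma~\ref{lemma: prolongation}, applied with its $R$ taken to be $K[X_1,X_2,1/\det(X_1),1/\det(X_2)]$ so that $S_{d+1}$ is the $(d+2)$-fold tensor power of $R$ over $K$, shows that the ideal of $S_{d+1}$ generated by $\m_d\cup\s(\m_d)$ is proper; any $\f$-maximal $\f$-ideal $\m_{d+1}$ of $S_{d+1}$ above it completes the induction, the two compatibility conditions being forced by the $\f$-maximality of $\m_d$. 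Moreover $\m_d\cap R_{i,d}=0$ for any such $\m_d$, since $R_{i,d}$ is $\f$-simple and $1\notin\m_d$.

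Setting $\m:=\bigcup_d\m_d$ gives a $\f$-maximal $\fs$-ideal of $S$ with $\m\cap R_i=0$, so $R_i\hookrightarrow T:=S/\m$. It remains to check that $\m$ is $\f$-pseudo prime; exactly as in the existence proof, this reduces to bounding $\per(S_d/\m_d)$ uniformly in $d$. Since $R_{i,d}\hookrightarrow S_d/\m_d$ and, by Lemma~\ref{lemma: linearly disjoint}, $S_d/\m_d$ is obtained from this embedded copy of $R_{i,d}$ by adjoining the $\f$-constant field $(S_d/\m_d)^\f$ (which is algebraic over $k$ by Lemma~\ref{lemma: almost no new constants}), the period of $S_d/\m_d$ equals $\per(R_{i,d})$, which is uniformly bounded by Corollary~\ref{cor: bound period}. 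Combined with the first paragraph, this proves the theorem.
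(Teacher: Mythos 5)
Your first paragraph --- deriving the isomorphism from a suitable $\f$-simple, $\f$-pseudo $\s$-domain quotient $T$ of $R_1\otimes_KR_2$ with $k':=T^\f$, using Remark~\ref{rem: fundamental solution matrix}, Lemma~\ref{lemma: linearly disjoint} and Proposition~\ref{prop:new phi constants are sigma constrained} --- is sound and close in spirit to what the paper does. The gap is in the construction of $\m$, where you transplant the iterative argument of Theorem~\ref{theo: existence of PV ring} into a setting where its key lemma does not apply. Lemma~\ref{lemma: prolongation} requires the ambient ring to be literally a $(d+2)$-fold tensor power $R_0\otimes_K\cdots\otimes_KR_{d+1}$ of copies of a fixed finitely generated $K$-algebra, with $\ida$ and $\idb$ ideals of the two overlapping sub-tensor-products. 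In the existence proof this structure is available because one works with the \emph{generic} solution ring, where $X,\s(X),\s^2(X),\dots$ are independent $\s$-indeterminates. Here $S_{d+1}=R_{1,d+1}\otimes_KR_{2,d+1}$ is only a proper quotient of the $(d+2)$-fold tensor power of $K[X_1,X_2,1/\det(X_1),1/\det(X_2)]$: the matrices $Y_i,\s(Y_i),\dots,\s^{d+1}(Y_i)$ satisfy algebraic relations over $K$ that link different blocks (this happens precisely when the $\s$-Galois group is a proper subgroup), so your identification is false and the hypotheses of the lemma are not met. Lifting $\m_d$ and $\s(\m_d)$ to the generic tensor power does not rescue the step, because the conclusion that $1$ is not in the ideal generated by the two lifts does not imply that $1$ is not in the ideal they generate together with the block-mixing relations, i.e.\ modulo $\ker\big(\widetilde S_{d+1}\to S_{d+1}\big)$.

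A second gap: your verification of the overlap condition uses $\s(S_{d-1})$ as the middle ring and implicitly treats $\s\colon S_d\to S_{1,d+1}$ as an isomorphism (every $f\in\m_d\cap S_{1,d}$ is written as $\s(g)$, and $\s(\m_d)$ is treated as an ideal of $S_{1,d+1}$). This requires $\s$ to be surjective on $K$, whereas the theorem only assumes surjectivity on $K^\f$; the existence proof reduces to the inversive case via $K^*$ and then descends, and no analogous reduction is supplied (or obvious) here, since one would need $R_1$ and $R_2$ to induce $\s$-\PV\ rings over $K^*$ and then descend $k'$ and the isomorphism. There are also smaller unjustified claims: $R_{i,d}$ need not be $\f$-simple, because $R_i^\f$ may be transcendental over $K^\f$ (Lemma~\ref{lemma: almost no new constants} needs finite generation as an algebra, and Proposition~\ref{prop: compare PVextension and PVring classical} needs algebraic constants), and $\per(S_d/\m_d)$ can strictly exceed $\per(R_{i,d})$ after adjoining the new $\f$-constants; both are repairable ($\m\cap R_i=0$ follows directly from $\f$-simplicity of $R_i$ itself, and the period bound follows from Corollary~\ref{cor: bound period} applied to the doubled equation with matrix $\operatorname{diag}(A,A)$). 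The paper sidesteps all of this by a different mechanism: it writes $R_1\otimes_KR_2=R_1\otimes_{k_1}S_Z$ with $S_Z=k_1\{Z,1/\det(Z)\}_\s$ consisting of $\f$-constants, produces a maximal $\s$-pseudo prime ideal $\p$ of $S_Z$ via Lemma~\ref{lemma: exists sigmapseudoprimeideal} and the constrained-extension results of Wibmer, takes $k'=\Q(S_Z/\p)$ (a constrained $\s$-pseudo field, not necessarily a field), and then controls $\f$-ideals through Lemma~\ref{lemma: correspondence for constants} rather than by choosing $\f$-maximal ideals level by level.
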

\begin{proof}
We know from Proposition \ref{prop:new phi constants are sigma constrained} that $k_1$ and $k_2$ are finitely $\s$-generated constrained $\s$-field extensions of $k$. Let $Y_1\in\Gl_n(R_1)$ and $Y_2\in\Gl_n(R_2)$ be fundamental solution matrices for $\f(y)=Ay$. Set $$Z=(Y_1\otimes1)^{-1}(1\otimes Y_2)\in\Gl_n(R_1\otimes_K R_2).$$ As noted in Remark \ref{rem: fundamental solution matrix}, we have
$$Z\in\Gl_n{\left((R_1\otimes_K R_2)^\f\right)}.$$ Since $1\otimes Y_2=(Y_1\otimes 1)\cdot Z$, the entries of $1\otimes Y_2$ lie in $$R_1\cdot S_Z,\quad S_Z := k_1{\left\{Z,1/\det(Z)\right\}}_\s\subset R_1\otimes_K R_2.$$ Using Lemma \ref{lemma: linearly disjoint}, it follows that
\[R_1\otimes_K R_2=R_1\cdot S_Z=R_1\otimes_{k_1} S_Z.\]
Our next goal is to find a $k_1$-$\s$-morphism $\psi\colon  S_Z\to k'$ for some finitely $\s$-generated constrained $\s$-pseudo field extension $k'$ of $k_1$.
We know from Lemma \ref{lemma: exists sigmapseudoprimeideal} that there exists a $\s$-pseudo prime ideal in $R_1\otimes_K R_2$. This $\s$-pseudo prime ideal contracts to a $\s$-pseudo prime ideal of $S_Z$. We can thus apply \cite[Prop.~2.12, p.~1390]{Wibmer:Chevalley} to find a maximal element $\p$ in the set of all $\s$-pseudo prime ideals of $S_Z$ ordered by inclusion. By \cite[Prop.~2.9, p.~1389]{Wibmer:Chevalley}, the residue $\s$-pseudo field
$$k':=\Q(S_Z/\p)$$ is a constrained $\s$-pseudo field extension of $k$. Moreover, we have a natural $k_1$-$\s$-morphism
$\psi\colon  S_Z\to k'$. Then
\[\varphi\colon R_2\to R_1\otimes_K R_2=R_1\otimes_{k_1}S_Z\xrightarrow{\id\otimes\psi}R_1\otimes_{k_1} k'\]
is a morphism of $K$-$\fs$-algebras. Since $(R_1\otimes_{k_1} k')^\f=k'$, this yields an embedding of $k_2$ into $k'$, and we can extend $\varphi$ to a $K\otimes_k k'$-$\fs$-morphism
$$\varphi\colon R_2\otimes_{k_2} k'\to R_1\otimes_{k_1}k'.$$
As $\varphi(Y_2)$ and $Y_1$ are fundamental solution matrices in $R_1\otimes_{k_1}k'$ for $\f(y)=Ay$, there exists $C\in\Gl_n(k')$ such that $$Y_1=\varphi(Y_2)C=\varphi(Y_2C).$$ Since $R_1$ is $\s$-generated by $Y_1$, this shows that $\varphi$ is surjective.
Now $R_2\otimes_{k_2}k'$ need not be $\f$-simple. However, by Lemma \ref{lemma: correspondence for constants}, every $\f$-ideal of $R_2\otimes_{k_2}k'$ is of the form $R_2\otimes_{k_2}\idb$ for some ideal $\idb$ of $k'$. Since the kernel of $\varphi$ is a $\f$-ideal, this implies that $\varphi$ is injective.
\end{proof}

\begin{lemma} \label{lemma:torsor}
Let $K$ be a \fpsf and $R$ a $\s$-Picard-Vessiot ring over $K$ with $R^\f=K^\f=:k$. Then
$$R\otimes_K R=R\otimes_k(R\otimes_K R)^\f.$$
\end{lemma}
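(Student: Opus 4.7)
The plan is to reuse the \emph{torsor trick} already exploited in the first paragraph of the proof of Theorem~\ref{theo: uniqueness of sigmaPVrings}. Choose a fundamental solution matrix $Y\in\Gl_n(R)$, so that $R=K\{Y,1/\det(Y)\}_\sigma$, and set
$$Z=(Y\otimes 1)^{-1}(1\otimes Y)\in \Gl_n(R\otimes_KR).$$
Since both $Y\otimes 1$ and $1\otimes Y$ satisfy $\phi(y)=Ay$ and $A$ has entries in $K$, the direct computation $\phi(Z)=(AY\otimes 1)^{-1}(1\otimes AY)=Z$ shows that the entries of $Z$ lie in $(R\otimes_KR)^\phi$. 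Because $\sigma$ commutes with $\phi$, the same holds for every $\sigma^i(Z)$, and hence the $k$-$\sigma$-subalgebra
$$S_Z:=k\{Z,1/\det(Z)\}_\sigma\subset R\otimes_KR$$
is contained in $(R\otimes_KR)^\phi$.

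Next I would show that $R\cdot S_Z=R\otimes_KR$, where $R$ is identified with $R\otimes 1$. Applying $\sigma^i$ to $1\otimes Y=(Y\otimes 1)\cdot Z$ yields $1\otimes\sigma^i(Y)=(\sigma^i(Y)\otimes 1)\cdot\sigma^i(Z)$, and, by taking determinants, also expresses every $1\otimes 1/\det(\sigma^i(Y))$ as a product of an element of $R\otimes 1$ with $1/\det(\sigma^i(Z))\in S_Z$. Since $R\otimes_KR$ is generated as a $K$-algebra by the $\sigma^i(Y)_{jl}\otimes 1$, the $1\otimes\sigma^i(Y)_{jl}$ and the corresponding determinant inverses, this gives $R\otimes_KR\subset R\cdot S_Z$, hence equality. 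In particular
$$R\cdot(R\otimes_KR)^\phi=R\otimes_KR.$$

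It remains to promote this surjectivity of multiplication to bijectivity of the natural map
$$R\otimes_k(R\otimes_KR)^\phi\longrightarrow R\otimes_KR,$$
which amounts to the linear disjointness of $R$ and $(R\otimes_KR)^\phi$ over $k$. The one point worth flagging is that Lemma~\ref{lemma: linearly disjoint} must be invoked not with the original base $K$, but with $R$ itself playing the role of the $\phi$-simple ring: $R$ is $\phi$-simple by the very definition of a $\s$-\PV\ ring, $R\otimes_KR$ is naturally an $R$-$\phi$-algebra via $r\mapsto r\otimes 1$, and $R^\phi=k$ by hypothesis. The lemma then delivers the required linear disjointness of $R$ and $(R\otimes_KR)^\phi$ over $k$, and the proof is complete. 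I do not anticipate any serious obstacle beyond this change of base.
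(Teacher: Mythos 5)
Your proof is correct and follows essentially the same route as the paper, which proves this lemma by citing the opening paragraph of the proof of Theorem~\ref{theo: uniqueness of sigmaPVrings} with $R_1=R_2=R$: introduce $Z=(Y\otimes 1)^{-1}(1\otimes Y)$, observe $R\cdot S_Z=R\otimes_KR$, and apply Lemma~\ref{lemma: linearly disjoint} with the $\f$-simple ring $R$ as base. Your explicit handling of the generators, of the inclusion $S_Z\subset(R\otimes_KR)^\f$, and of the change of base in the linear-disjointness lemma just spells out what the paper leaves implicit.
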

\begin{proof}
This follows as in the beginning of the proof of Theorem \ref{theo: uniqueness of sigmaPVrings} (with $R_1=R_2=R$).
\end{proof}

\begin{cor}[Uniqueness of $\sigma$-\PV\ extensions] \label{cor:uniqueness of sigmaPVextension}
Let $K$ be a $\fs$-field and let $L_1,L_2$ be two $\s$-\PV\ extensions for the same equation $\f(y)=Ay$, $A\in\Gl_n(K)$. Assume that $K^\f$ is $\s$-closed. Then there exists an integer $l\geq 1$ and an isomorphism of $K$-$\phi\s^l$-algebras between $L_1$ and $L_2$.
\end{cor}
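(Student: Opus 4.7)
Set $k:=K^\f$. The strategy is to reduce the corollary to the uniqueness theorem for $\s$-\PV\ rings (Theorem~\ref{theo: uniqueness of sigmaPVrings}) and then descend through the constant extension $k'$ that theorem produces. Since $k$ is $\s$-closed, it is algebraically closed and inversive, so $\s\colon k\to k$ is an automorphism and $K^{\f^\infty}=K^\f$ (as the relative algebraic closure of $K^\f$ in $K$ equals $K^\f$); the hypotheses of Theorem~\ref{theo: uniqueness of sigmaPVrings} are therefore satisfied. For $i=1,2$, fix a fundamental solution matrix $Y_i\in\Gl_n(L_i)$ and set $R_i:=K\{Y_i,1/\det(Y_i)\}_\s\subset L_i$. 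By Proposition~\ref{prop: compare sigmaPVring simgaPVextension}, $R_i$ is a $\s$-\PV\ ring; Corollary~\ref{cor: no new constants} yields $R_i^\f=k$; and $L_i=\Q(R_i)$, since any $a/b\in L_i=K\langle Y_i\rangle_\s$ with $a,b\in K\{Y_i\}_\s$ and $b\in L_i^\times$ has $b$ a non-zero-divisor of $R_i$ (if $br=0$ in $R_i$, then $br=0$ in $L_i$, forcing $r=0$), hence $b$ is invertible in $\Q(R_i)$.

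\textbf{Descent through $k'$.} Theorem~\ref{theo: uniqueness of sigmaPVrings} supplies a finitely $\s$-generated constrained $\s$-pseudo field extension $k'|k$ together with an isomorphism $\psi\colon R_1\otimes_k k'\xrightarrow{\sim} R_2\otimes_k k'$ of $K\otimes_k k'$-$\fs$-algebras. Because $k$ is $\s$-closed, \cite[Ex.~2.8, p.~1388]{Wibmer:Chevalley} forces $k'\cong k\oplus\cdots\oplus k$ ($l$ copies) as $k$-algebras, with orthogonal idempotents $e_1,\dots,e_l$ that are cyclically permuted by $\s$. The construction of $k'$ in the proof of Theorem~\ref{theo: uniqueness of sigmaPVrings} places $k'$ inside the $\f$-constants of $R_1\otimes_K R_2$, so $\f$ acts trivially on $k'$; in particular $\f(e_j)=e_j$ and $\s^l(e_j)=e_j$, and each summand $e_j\cdot k'$ is isomorphic to $(k,\s^l)$ as a $\s^l$-algebra over $k$.

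\textbf{Restricting to one summand.} Since $\psi$ is $k'$-linear, $\psi(e_1)=e_1$, so multiplication by $e_1$ yields a restricted isomorphism
\[
e_1\cdot(R_1\otimes_k k')\xrightarrow{\sim} e_1\cdot(R_2\otimes_k k')
\]
that is $K$-linear and $\f$-equivariant, and that is $\s^l$-equivariant but in general not $\s$-equivariant (because $\s$ moves the $e_1$-summand to the $e_2$-summand). Under the canonical identification $e_1\cdot(R_i\otimes_k k')\cong R_i\otimes_k(e_1\cdot k')\cong R_i$ of $K$-$\f\s^l$-algebras, we obtain an isomorphism $R_1\cong R_2$ of $K$-$\f\s^l$-algebras, which extends functorially to $L_1=\Q(R_1)\cong\Q(R_2)=L_2$ as required. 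The main obstacle, and the reason $l$ appears in the statement, is that over a $\s$-closed base the constrained $\s$-pseudo field $k'$ need not be a field but merely a direct sum of $l$ copies of $k$; descending from $k'$ back to $k$ forces the restriction to one direct summand, and only the iterate $\s^l$ preserves that summand.
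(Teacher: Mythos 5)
Your proposal is correct and follows essentially the same route as the paper: invoke Theorem~\ref{theo: uniqueness of sigmaPVrings}, use \cite[Ex.~2.8]{Wibmer:Chevalley} to write $k'=k\oplus\cdots\oplus k$, descend to a single summand, and extend to total quotient rings. Your multiplication by the idempotent $e_1$ is the same operation as the paper's passage to the quotient by the maximal ideal $\ida=(1-e_1)k'$, so the two arguments coincide.
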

\begin{proof}
Let $R_1\subset L_1$ and $R_2\subset L_2$ denote the corresponding $\s$-\PV\ rings. As usual, we set $k:=K^\f$. We have $R_1^\f=k$ and $R_2^\f=k$. By Theorem~\ref{theo: uniqueness of sigmaPVrings}, there exists a finitely $\s$-generated constrained $\s$-pseudo field extension $k'$ of $k$ and an isomorphism
\[\varphi\colon R_1\otimes_k k'\to R_2\otimes_k k'\]
of $K\otimes_k k'$-$\fs$-algebras. But, by \cite[Ex.~2.8, p.~1388]{Wibmer:Chevalley}, every finitely $\s$-generated constrained $\s$-pseudo field extension of a $\s$-closed $\s$-field is trivial. This means that there exists an integer $l\geq 1$ such that $k'$ is of the form $k'=k\oplus\cdots\oplus k$ with $\s$ given by
\[\s(a_1\oplus\cdots\oplus a_l)=\s(a_l)\oplus\s(a_1)\oplus \cdots\oplus\s(a_{l-1}).\]
Let $\ida$ be a maximal ideal of $k'$. Then $\ida$ is a $\s^l$-ideal with $k'/\ida=k$ as $\s^l$-rings. For $i=1,2$, the ideal $R_i\otimes_k\ida$ is a $\f\s^l$-ideal of $R_i\otimes_k k'$, and
$\varphi$ is mapping $R_1\otimes_k\ida$ bijectively onto $R_2\otimes_k\ida$. Passing to the quotient, we obtain an isomorphism
\[\overline{\varphi}\colon (R_1\otimes_k k')/(R_1\otimes_k\ida)\to (R_2\otimes_k k')/(R_2\otimes_k\ida)\]
of $\fs^l$-rings.
But
\[R_i\to R_i\otimes_k k'\to  (R_i\otimes_k k')/(R_i\otimes_k\ida)=R_i\otimes_k(k'/\ida)=R_i\]
identifies $R_i$ with $(R_i\otimes_k k')/(R_i\otimes_k\ida)$ as $\f\s^l$-ring. So, we have constructed a $K$-$\f\s^l$-isomorphism between $R_1$ and $R_2$. Finally, this isomorphism extends to the total quotient rings, that is, to the $\s$-\PV\ extensions. 
\end{proof}

Let $K$ be a $\fs$-field and $A\in\Gl_n(K)$. Even if, in all generality, a $\s$-\PV\ extension for $\f(y)=Ay$ need not be unique, the following remark shows that, in some situations, it is possible to make a more or less canonical choice. For example, if $K=k(z)$ as in \S\ref{subsec: special existence}, then the $\s$-\PV\ ring for $\f(y)=Ay$ inside $\seq_k$ is unique (as a subring of $\seq_k$).

\begin{rem}
Let $K$ be a $\fs$-field and $A\in\Gl_n(K)$. Let $S$ be a $K$-$\fs$-algebra with $S^\f=K^\f$. If there exists a $\s$-\PV\ ring $R$ for $\f(y)=Ay$ in $S$, then $R$ is unique in the sense that any other $\s$-\PV\ ring for $\f(y)=Ay$ in $S$ equals $R$.
\end{rem}
\begin{proof}
Let $R'$ be another $\s$-\PV\ ring for $\f(y)=Ay$ inside $S$. As $R$ and $R'$ are $\s$-generated by appropriate fundamental solution matrices, it follows from Remark~\ref{rem: fundamental solution matrix} and the fact that $S^\f\subset K$ that $R'=R$.
\end{proof}

\subsection{$\s$-Galois group and Galois correspondence}\label{sec:Galoiscorrespondence}
In this section, we will define the $\s$-Galois group of $\f(y) = Ay$ (Definition~\ref{def:sGalois}), show that it is a 
$\s$-algebraic group (Lemma~\ref{lem:repbyH}), establish the Galois correspondence (Theorem~\ref{thm:Galoiscorrespondence}), and finish by showing that the $\s$-dimension, introduced in~\cite{DiVizioHardouinWibmer:DifferenceGaloisofDifferential}, of the $\s$-Galois group coincides with the $\s$-dimension of a $\s$-\PV\ ring of the equation (Lemma~\ref{lem:dimequal}), which we will further use in our applications, Theorems~\ref{thm:app} and~\ref{thm:Nev}. For this, we first recall what a 
$\s$-algebraic group is using the language of $\s$-Hopf algebras (and representable functors). See the appendix of \cite{DiVizioHardouinWibmer:DifferenceGaloisofDifferential} for a brief introduction to $\s$-algebraic groups.

Throughout Sections \ref{sec:Galoiscorrespondence} and \ref{sec:isomonodromic} we will make the following assumptions. $K$ is a \fpsf and $k:=K^\f$ is its $\s$-field of $\f$-constants. $R$ is a $\s$-Picard-Vessiot ring for the linear $\f$-equation $\f(y)=Ay,\ A\in\Gl_n(K)$ with $R^\f=k$. $L=\quot(R)$ is the corresponding $\s$-Picard-Vessiot extension. The category of $k$-$\s$-algebras is denoted by $\AlgKs$.

\begin{defi} A $k$-$\s$-Hopf algebra is a Hopf algebra over $k$ in which
the comultiplication $\Delta$, antipode $S$, and counit $\varepsilon$  are $k$-$\s$-algebra homomorphisms.
\end{defi}

\begin{defi} A 
$k$-$\s$-algebraic group is a functor  $G : \AlgKs \to \Set$ represented by a $k$-$\s$-Hopf algebra $H$, which is finitely $\s$-generated over $k$. That is, for every $B \in \AlgKs$, $$
G(B) = \Hom_{\Ks}(H,B).
$$
For simplicity, we say that $H$ represents $G$.
\end{defi}

\begin{defi}\cite[Def.~A.37]{DiVizioHardouinWibmer:DifferenceGaloisofDifferential} A 
$k$-$\s$-algebraic group $G'$ is called a $k$-$\s$-subgroup of a 
$k$-$\s$-algebraic group $G$ if $G'(B)$ is a subgroup of $G(B)$ for every $k$-$\s$-algebra $B$.
\end{defi}

\begin{prop}\cite[Rem.~A.38]{DiVizioHardouinWibmer:DifferenceGaloisofDifferential}\label{prop:subgroup} For every $k$-$\s$-algebraic subgroup $G'$ of a  
$k$-$\s$-algebraic group $G$ represented by $H$, there exists a $\s$-Hopf ideal $I$ in $H$ such that
$G'$ is represented by $H/I$ and vice versa.
\end{prop}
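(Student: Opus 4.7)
The plan is to apply the Yoneda lemma to translate between subfunctors of $G$ and quotient $\s$-Hopf algebras of $H$, and then to verify that the resulting ideal is compatible with the $\s$-Hopf structure.

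For the forward direction, suppose $G'$ is a $k$-$\s$-algebraic subgroup of $G$, represented by some $k$-$\s$-Hopf algebra $H'$. By the Yoneda lemma, the natural transformation $G'\hookrightarrow G$ corresponds to a morphism $\pi\colon H\to H'$ of $k$-$\s$-algebras; explicitly, $\pi$ is the image of $\id_{H'}\in G'(H')\subset G(H')=\Hom_{\Ks}(H,H')$ under the inclusion. I would then set $I:=\ker(\pi)$. Because $\pi$ is a $\s$-algebra map, $I$ is automatically a $\s$-ideal, and because $\pi$ is a Hopf algebra map (the group operations on $G'(B)$ are inherited from $G(B)$, which dualizes to compatibility of $\pi$ with $\Delta$, $\varepsilon$, and $S$), the ideal $I$ is a Hopf ideal, hence a $\s$-Hopf ideal.

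For the converse, given a $\s$-Hopf ideal $I\subset H$, the quotient $H/I$ inherits a $k$-$\s$-Hopf algebra structure from $H$ and is finitely $\s$-generated since $H$ is. Let $G'$ be the functor represented by $H/I$. The canonical surjection $H\to H/I$ induces, for every $B\in\AlgKs$, an injective map $G'(B)=\Hom_{\Ks}(H/I,B)\hookrightarrow \Hom_{\Ks}(H,B)=G(B)$, natural in $B$. The Hopf ideal conditions $\Delta(I)\subset I\otimes H+H\otimes I$, $S(I)\subset I$, and $\varepsilon(I)=0$ translate exactly into the statements that $G'(B)$ is closed under multiplication and inverses and contains the identity; thus $G'$ is a $k$-$\s$-subgroup of $G$.

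The main obstacle is verifying that the ideal $I=\ker(\pi)$ produced in the forward direction really does represent $G'$, i.e.\ that $\pi$ is surjective, so that $H/I\cong H'$. This is the $\s$-analogue of the classical fact that epimorphisms of commutative Hopf algebras are surjective: one shows that any $g\colon H\to B$ with $g(I)=0$ must arise from an element of $G'(B)$, by using the diagonal $G\to G\times G$ together with the representability of $G'$ to compare $g$ with the universal element $\id_{H'}\in G'(H')$; the $\s$-equivariance of all maps involved allows the classical argument to carry over to the $k$-$\s$-Hopf setting without change.
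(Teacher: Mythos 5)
The converse direction, and the identification of $I=\ker(\pi)$ as a $\s$-Hopf ideal in the forward direction, are fine. Note, however, that the paper gives no proof of this proposition at all: it is imported from Di Vizio--Hardouin--Wibmer, where a $k$-$\s$-subgroup is \emph{defined} to be a $\s$-closed subscheme of $G$ that is a subgroup on points, i.e., it already comes presented as the functor of a quotient $H/I$ by a $\s$-ideal $I$. Under that reading the entire content of the proposition is that the subgroup axioms force $I$ to be a Hopf ideal and conversely, which is exactly your dualization of $\Delta$, $\varepsilon$, $S$; the ``main obstacle'' you identify never arises.

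Under the literal reading of the definition as restated in this paper (merely a subfunctor with $G'(B)$ a subgroup of $G(B)$ for every $B\in\AlgKs$), the surjectivity of $\pi\colon H\to H'$ is genuinely the whole difficulty: it is the $\s$-analogue of the theorem that monomorphisms of affine group schemes over a field are closed immersions, and your last paragraph asserts it rather than proves it. The claim that ``the classical argument carries over without change'' glosses over two points. First, injectivity of $G'(B)\hookrightarrow G(B)$ is only available for $k$-$\s$-algebras $B$, so a priori $\pi$ is only an epimorphism in $\AlgKs$, not in the category of all commutative $k$-algebras; one must check that the test objects in the classical proof, in particular $H'\otimes_A H'$ with $A:=\pi(H)$, carry natural $\s$-structures so that this weaker epimorphism property suffices (they do, but this must be said). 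Second, the classical proof is not the Yoneda argument with the diagonal that you sketch; it rests on Takeuchi's theorem that a commutative Hopf algebra over a field is faithfully flat over any Hopf subalgebra, applied to $A\subseteq H'$, together with faithfully flat descent in the form $A=\{x\in H'\mid x\otimes 1=1\otimes x \text{ in } H'\otimes_AH'\}$. These ingredients do go through in the $\s$-setting, so the gap is fillable, but as written the forward direction is incomplete precisely at the step you flag; the cleanest fix is simply to use the definition from the cited source, for which no such argument is needed.
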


The multiplicative $k$-$\s$-algebraic group $\Gm$ is the $k$-$\s$-algebraic group represented by $k\{x, 1/x\}_\s$ with $\Delta(x) = x\otimes x$, $S(x) = 1/x$, and $\varepsilon(x) = 1$.

\begin{prop}\cite[Lem.~A.40]{DiVizioHardouinWibmer:DifferenceGaloisofDifferential}\label{prop:Gmphi} For every $\s$-Hopf ideal $I$ of $ H := k\{x, 1/x\}_\s$ with the above Hopf algebra structure, there exists a multiplicative function $\varphi = x^{n_0}\cdot\s(x)^{n_1}\cdot\ldots\cdot\s^t(x)^{n_t} \in H$ such that $I$ contains $\varphi-1$.
\end{prop}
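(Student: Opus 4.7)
The plan is to exploit the description of $H = k\{x,1/x\}_\s$ as a Laurent polynomial ring in which the $\s$-monomials are precisely the group-like elements of the Hopf algebra structure. Concretely, as a $k$-algebra, $H$ is the Laurent polynomial ring $k\bigl[\s^i(x)^{\pm 1} : i\geq 0\bigr]$ in the algebraically independent generators $\s^i(x)$, so the monomials
\[
\varphi_\alpha := x^{n_0}\s(x)^{n_1}\cdots\s^t(x)^{n_t}, \qquad \alpha = (n_0,\ldots,n_t)\in\bigoplus\nolimits_{i\geq 0}\ZZ,
\]
form simultaneously a $k$-basis of $H$ and a multiplicative subgroup $M\subset H^\times$. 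Since $\Delta$ is a $k$-$\s$-algebra homomorphism with $\Delta(x) = x\otimes x$, one obtains $\Delta(\s^i(x)) = \s^i(x)\otimes\s^i(x)$ for every $i$, and therefore $\Delta(\varphi_\alpha) = \varphi_\alpha\otimes\varphi_\alpha$; that is, every $\varphi_\alpha$ is group-like in $H$.

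Given a $\s$-Hopf ideal $I\subset H$, I would pass to the commutative $k$-Hopf algebra $\overline{H} := H/I$. The images $\overline{\varphi_\alpha}$ of the monomials remain group-like in $\overline{H}$, and the classical fact that the group-like elements of a commutative $k$-bialgebra are $k$-linearly independent supplies the key constraint. Consider the group homomorphism $\pi\colon M\to\overline{H}^\times$ that sends $\varphi_\alpha$ to its class.

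The argument then splits into two cases. If $\pi$ is injective, the images $(\overline{\varphi_\alpha})_{\alpha\in M}$ form a $k$-linearly independent family in $\overline{H}$; the quotient map $H\to\overline{H}$ then carries a $k$-basis of $H$ to a linearly independent set and so is itself injective, forcing $I=(0)$, in which case one may take $\varphi = 1$. Otherwise, pick $\alpha\neq\beta$ with $\overline{\varphi_\alpha} = \overline{\varphi_\beta}$, and set $\varphi := \varphi_\alpha\cdot\varphi_\beta^{-1}$. Since negative exponents are permitted in the defining form of a multiplicative function, $\varphi$ has the required shape, and $\varphi - 1\in I$ by construction.

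The only substantive ingredient is the classical linear independence of group-like elements in a commutative Hopf algebra over a field; everything else is bookkeeping about the basis of $H$. The $\s$-action does not enter the existence argument for $\varphi$, though it is automatically consistent with the conclusion: from $\varphi - 1\in I$ one has $\s(\varphi) - 1 = \s(\varphi - 1)\in I$, so the subset of multiplicative functions $\varphi$ with $\varphi - 1\in I$ forms a $\s$-stable subgroup of $M$.
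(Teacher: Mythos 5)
Your proof is correct. The paper itself gives no argument for this proposition --- it is quoted verbatim from \cite[Lem.~A.40]{DiVizioHardouinWibmer:DifferenceGaloisofDifferential} --- so there is nothing internal to compare against; your route (view $H$ as the group algebra of the free abelian group on the $\s^i(x)$, observe that the monomials are group-like, and invoke linear independence of distinct group-like elements in the coalgebra $H/I$) is the standard argument behind the cited lemma, and every step checks out, including the deduction that injectivity of $\pi$ forces $I=(0)$. A small bonus worth noting: the way the theorem is actually used later (Theorems~\ref{thm:app} and~\ref{thm:Nev} need $\varphi\neq 1$ when the Galois group is a \emph{proper} subgroup of $\Gm$, i.e.\ $I\neq(0)$) is exactly the contrapositive of your first case, so your dichotomy delivers the sharper form that the applications require, not just the literal statement.
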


\begin{lemma}\label{lem:Hopf} The $k$-$\s$-algebra
$$
H := (R\otimes_K R)^\f
$$
 is a $k$-$\s$-Hopf algebra via the $\fs$-$R$-bimodule structure on $C:=R\otimes_KR$ (see \cite[(1.5,1.6)]{AmanoMasuokaTakeuchi:HopfPVtheory}):
\begin{align*}
&\Delta : C\to C\otimes_RC, &&\Delta(a\otimes b) = a\otimes 1\otimes b \in R\otimes_K R\otimes_K R \cong R\otimes_KR\otimes_RR\otimes_KR,\\
&\varepsilon : C \to R,&&\varepsilon(a\otimes b) = ab
\end{align*}
and the $K$-$\fs$-linear flip homomorphism $
\tau : C\to C$, $\tau(a\otimes b) = b\otimes a$. Moreover,
\begin{equation}\label{eq:torsor}
\mu: R\otimes_{k}H \to R\otimes_KR,\quad r\otimes h \mapsto (r\otimes 1)\cdot h
\end{equation}
is an isomorphisms of $K$-$\fs$-algebras.
\end{lemma}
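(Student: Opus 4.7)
My plan is to read off the isomorphism $\mu$ directly from Lemma~\ref{lemma:torsor} and then use $\mu$ to transfer the Amano--Masuoka--Takeuchi bimodule structure on $C=R\otimes_K R$ to a genuine Hopf algebra structure on the $\f$-invariants $H=C^\f$.

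For the isomorphism $\mu$ itself, Lemma~\ref{lemma:torsor} asserts precisely that the multiplication map
\[
R\otimes_k(R\otimes_K R)^\f\longrightarrow R\otimes_K R,\qquad r\otimes h\mapsto(r\otimes 1)\cdot h,
\]
is bijective, and it is visibly a homomorphism of $K$-$\fs$-algebras (using the commutativity of $R\otimes_K R$). So~\eqref{eq:torsor} is just a restatement of that lemma.

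Next, I would check that each of the three maps $\Delta$, $\varepsilon$, $\tau$ on $C$ commutes with $\f$, which is immediate from the diagonal action $\f(a\otimes b)=\f(a)\otimes\f(b)$ together with the explicit formulas. Consequently, each of them restricts to the $\f$-invariants. To identify the codomain of the restricted $\Delta$, I would iterate $\mu$ to get
\[
C\otimes_R C\;\cong\;R\otimes_K R\otimes_K R\;\cong\; R\otimes_k H\otimes_k H,
\]
where the second isomorphism applies $\mu$ twice. Since $H\subset C^\f$, this identification transports the diagonal $\f$-action to $\f\otimes\id\otimes\id$. A basis argument together with the linear disjointness of $R$ and $R^\f$-subalgebras (Lemma~\ref{lemma: linearly disjoint}) then yields $(R\otimes_k H\otimes_k H)^\f=R^\f\otimes_k H\otimes_k H=H\otimes_k H$. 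The same reasoning shows that the restricted $\varepsilon$ lands in $R^\f=k$, while $\tau$ tautologically restricts to an involution $H\to H$.

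It remains to verify the Hopf algebra axioms for $(H,\Delta,\varepsilon,\tau)$. These follow from the coassociativity, counit, and antipode-like identities built into the bimodule construction of \cite[(1.5,1.6)]{AmanoMasuokaTakeuchi:HopfPVtheory}, which reduce to the classical Hopf algebra identities once we pass to the $\f$-invariants trivialized by $\mu$. The main obstacle in the argument is the identification $(C\otimes_R C)^\f=H\otimes_k H$, which rests on the torsor identity of Lemma~\ref{lemma:torsor}; once that identification is secured, the remaining verifications are routine.
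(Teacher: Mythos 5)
Your proposal is correct and follows essentially the same route as the paper: both obtain $\mu$ directly from Lemma~\ref{lemma:torsor}, iterate it to identify $C\otimes_RC\cong R\otimes_KR\otimes_KR\cong R\otimes_kH\otimes_kH$, pass to $\f$-constants (using $R^\f=k$) to land $\Delta$ in $H\otimes_kH$ and $\varepsilon$ in $k$, and defer the verification of the Hopf axioms to the argument of Amano--Masuoka--Takeuchi.
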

\begin{proof} The proof is a modification of the proof of \cite[Prop.~1.7]{AmanoMasuokaTakeuchi:HopfPVtheory} and \cite[Prop.~3.4]{AmanoMasuoka:artiniansimple}.
We already noted in Lemma \ref{lemma:torsor} that (\ref{eq:torsor}) is an isomorphism. 
It follows that the $K$-$\fs$-algebra homomorphism
$$
\begin{CD}
R\otimes_{k}H\otimes_{k} H @>\mu\otimes\id>>R\otimes_KR\otimes_{k} H@>\id\otimes\mu>> R\otimes_KR\otimes_KR
\end{CD}
$$
is an isomorphism.
By taking $\f$-constants, we, therefore, obtain a $k$-$\s$-algebra isomorphism
\begin{equation}\label{eq:HtoA}
H\otimes_{k} H \to (R\otimes_K R\otimes_K R)^\f.
\end{equation}
To show that, given the above, $H$ becomes a $k$-$\s$-Hopf algebra, one proceeds as in the 
proof of \cite[Prop.~1.7]{AmanoMasuokaTakeuchi:HopfPVtheory}.\end{proof}

\begin{defi}\label{def:sGalois} Let $R$ and $L$ be as above. Then the $\s$-Galois group of $L$ over $K$ is defined as the functor
$$
\Gal^\s(L|K) : \AlgKs \to \Set,\quad B\mapsto\Gal^\s(L|K)(B) := \AutKfs\big(R\otimes_{k}B\big|K\otimes_{k} B\big),  
$$
where $\f$ acts as the identity on $B$.
\end{defi}

\begin{lemma}\label{lem:repbyH} Let $R$, $L$, and $H$ be as above. Then $G:=\Gal^\s(L|K)$ is a  
$k$-$\s$-algebraic group represented by $H$.
\end{lemma}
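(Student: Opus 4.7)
The strategy is to exhibit a natural bijection
\[
G(B) = \AutKfs\big(R\otimes_{k}B\big|K\otimes_{k} B\big) \;\cong\; \Hom_{\Ks}(H,B)
\]
that is functorial in $B \in \AlgKs$. Since $H$ is already known to be a finitely $\s$-generated $k$-$\s$-Hopf algebra by Lemma~\ref{lem:Hopf}, this identifies $G$ with the affine $k$-$\s$-group scheme represented by $H$, which is all that is required. The entire argument is the standard Hopf--Galois comodule/automorphism correspondence, adapted so that everything is compatible with the $\s$-structure; the torsor isomorphism $\mu$ of Lemma~\ref{lem:Hopf} is the single essential ingredient.

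\textbf{The coaction.} First, use $\mu^{-1}\colon R\otimes_K R \xrightarrow{\sim} R\otimes_k H$ to define a $K$-$\fs$-algebra homomorphism
\[
\rho\colon R \longrightarrow R\otimes_k H, \qquad r\longmapsto \mu^{-1}(1\otimes r),
\]
by composing the $K$-$\fs$-map $r\mapsto 1\otimes r$ with $\mu^{-1}$; it is $\fs$-equivariant because the $\f$-action on $H$ is trivial and $\mu$ is $\fs$-linear. The coassociativity of $\rho$ (i.e.\ $(\rho\otimes\id_H)\circ\rho=(\id_R\otimes\Delta)\circ\rho$) and counitality ($(\id_R\otimes\varepsilon)\circ\rho=\id_R$) follow by unraveling the definition of $\Delta$ from Lemma~\ref{lem:Hopf} together with the isomorphism \eqref{eq:HtoA}; this is exactly the argument of \cite[Prop.~1.7]{AmanoMasuokaTakeuchi:HopfPVtheory} transferred to the present setting.

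\textbf{From $\Hom_{\Ks}(H,B)$ to $G(B)$.} For each $\psi\in\Hom_{\Ks}(H,B)$, define
\[
\alpha_\psi\colon R\otimes_k B \longrightarrow R\otimes_k B
\]
as the $B$-linear extension of $(\id_R\otimes\psi)\circ\rho$. Because $\rho$ is a morphism of $K$-$\fs$-algebras, $\psi$ is a $\s$-morphism, and $\f$ acts as the identity on $H$ and on $B$, the map $\alpha_\psi$ is a morphism of $K\otimes_k B$-$\fs$-algebras. Invertibility is obtained by producing $\alpha_{\psi\circ S}$ (where $S$ is the antipode of $H$) and checking $\alpha_\psi\circ\alpha_{\psi\circ S}=\id$ via the counit/antipode axiom, exactly as in the standard Hopf--Galois setup.

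\textbf{From $G(B)$ to $\Hom_{\Ks}(H,B)$ and inversion.} Conversely, given $\alpha\in G(B)$, consider the $K\otimes_k B$-$\fs$-algebra map $\id_R\otimes\alpha$ acting on the first and third tensor factors of $R\otimes_K R\otimes_k B = (R\otimes_K R)\otimes_k B$; restricting to the $\f$-constants $H\otimes_k B\subset (R\otimes_K R)\otimes_k B$ yields a $k$-$\s$-algebra map $H\to H\otimes_k B$, and post-composition with $\varepsilon\otimes\id_B$ gives $\psi_\alpha\colon H\to B$ in $\Hom_{\Ks}(H,B)$. The two assignments $\psi\leftrightarrow\alpha$ are mutually inverse by a direct diagram chase using coassociativity, the counit identity, and the fact that $R\otimes_k H\to R\otimes_K R$ is an isomorphism (so $R$ is ``faithfully cogenerated'' by $H$); naturality in $B$ is immediate from the definitions.

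\textbf{Main obstacle.} The only real subtlety is bookkeeping: one must check that each construction respects simultaneously the $K\otimes_k B$-algebra structure, the action of $\f$, and the action of $\s$, and that the counit/antipode manipulations pull back correctly through the identification $H=(R\otimes_K R)^\f$. These verifications are formal once Lemma~\ref{lem:Hopf} (equivalently, the torsor isomorphism $\mu$ and the resulting Hopf algebra structure on $H$) is in hand, and they are parallel to \cite[Prop.~1.7]{AmanoMasuokaTakeuchi:HopfPVtheory} and \cite[Prop.~3.4]{AmanoMasuoka:artiniansimple}; the additional $\s$-equivariance is automatic because $\mu$, $\rho$, $\Delta$, $\varepsilon$, and $S$ are all $\s$-morphisms by construction.
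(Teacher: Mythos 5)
Your proposal is correct, and its first half coincides with the paper's: both define the coaction $\theta(r)=\mu^{-1}(1\otimes r)$ and send $\psi\in\Hom_{\Ks}(H,B)$ to the automorphism $(\id_R\otimes m)\circ(\id_R\otimes\psi\otimes\id_B)\circ(\theta\otimes\id_B)$ (the paper cites \cite[Thm.~3.2]{Waterhouse} for invertibility and for the fact that $g\mapsto\Phi_g$ is a group homomorphism, where you instead invoke the antipode; both are standard and fine, though you should at least note, as the paper does, that the bijection respects the group structures, since that is part of what ``represented by the Hopf algebra $H$'' means). The two arguments diverge in the reverse direction. The paper exploits the concrete generators: with $Y$ a fundamental solution matrix and $Z=(Y\otimes1)^{-1}(1\otimes Y)$ one has $H=k\{Z,1/\det(Z)\}_\s$, and Remark~\ref{rem: fundamental solution matrix} gives $\varphi(Y)=YC_\varphi$ with $C_\varphi\in\Gl_n(B)$, so one simply sets $Z\mapsto C_\varphi$; this makes well-definedness, $\s$-equivariance, and naturality visible at a glance and reuses the matrix $Z$ that reappears in Lemma~\ref{lem:dimequal}. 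You instead extend $\alpha$ to $R\otimes_K R\otimes_k B$, restrict to $\f$-constants, and compose with $\varepsilon\otimes\id_B$ — a coordinate-free version of the same construction (applied to $Z$ it returns exactly $C_\alpha$). Your route requires the small extra observation that $(R\otimes_KR\otimes_kB)^\f=H\otimes_kB$, which follows from $R^\f=k$ and Lemma~\ref{lemma: linearly disjoint}, and it buys independence from the choice of $Y$; the paper's route is shorter and keeps everything explicit. Either way the verification that the two assignments are mutually inverse is the routine diagram chase you describe.
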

\begin{proof}
As in the proof of \cite[Lem.~1.9]{AmanoMasuokaTakeuchi:HopfPVtheory}, $R$ is an $H$-comodule via
$$
\theta : R \to R\otimes_{k}H,\quad r \mapsto \mu^{-1}(1\otimes r),
$$
which is a $K$-$\fs$-algebra homomorphism,
where $\mu$ is defined in~\eqref{eq:torsor}. 
For every $k$-$\s$-algebra B and  $g \in \Hom_{\Ks}(H,B)$, we have a $K$-$\fs$-algebra homomorphism
$$
\begin{CD}
\Phi_g : R\otimes_{k} B @>\theta\otimes\id_B>>R\otimes_{k}H\otimes_{k}B@>\id_R\otimes g\otimes\id_B>> R\otimes_{k}B\otimes_{k}B @>\id_R\otimes m>> R\otimes_{k}B,
\end{CD}
$$
which is an automorphism by \cite[Thm.~3.2]{Waterhouse}. Moreover, by \cite[Thm.~3.2]{Waterhouse} as well, the map $g \mapsto \Phi_g$ is a group homomorphism. For the reverse direction, let $Y\in\Gl_n(R)$ be a fundamental solution matrix of $\f(y)=Ay$ and
$Z=(Y\otimes 1)^{-1}(1\otimes Y)\in\Gl_n(R\otimes_K R)$. Then $H=k\{Z,1/\det(Z)\}_\s$ and it follows from Remark~\ref{rem: fundamental solution matrix} that, for any $$\varphi \in \AutKfs\big(R\otimes_{k}B\big|K\otimes_{k} B\big),$$ there exists $C_\varphi \in \Gl_n(B)$ such that
$\varphi(Y) = Y\cdot C_\varphi$. We define a $k$-$\s$-algebra homomorphism $H \to B$ by sending $Z$ to $C_\varphi$. 
\end{proof}

\begin{theo}\label{thm:Galoiscorrespondence} There is a one-to-one correspondence between $k$-$\s$-algebraic subgroups in $G$ and intermediate $\f$-pseudo $\s$-fields in $L|K$ given by
\begin{equation}\label{eq:Galoiscorrespondence}
M = L^{G'} := \big\{a/b \in L\:|\: \theta'(a)\cdot b=a\cdot\theta'(b),\ a,b \in R\big\}\quad\longleftrightarrow\quad G' := \Gal^\sigma(L|M),
\end{equation}
or, alternatively,
\begin{equation}\label{eq:Galoiscorrespondence2}
M = L^{G'} := \big\{x \in L\:|\: \text{for all } B \in \AlgKs,\: g \in G'(B),\ g(x\otimes 1) = x\otimes 1\big\}\quad\longleftrightarrow\quad G' := \Gal^\sigma(L|M),
\end{equation}
where $\theta' : R\to R\otimes_{k} H'$, and $H'$ represents $G'$.
\end{theo}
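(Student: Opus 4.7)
The plan is to build the correspondence in four stages, closely modelled on the Hopf-algebraic treatment of the classical case in \cite{AmanoMasuokaTakeuchi:HopfPVtheory}, with Lemma~\ref{lemma:torsor} providing the crucial torsor identity. First I would verify that the two maps are well-defined. Given an intermediate $\f$-pseudo $\s$-field $M$, set $R_M:=M\{Y,1/\det(Y)\}_\s\subset L$, where $Y\in\Gl_n(R)$ is the fundamental solution matrix. Since $R_M\subset L$, we have $R_M^\f\subset L^\f=k\subset M$, hence $R_M^\f=M^\f=:k_M$. Then Proposition~\ref{prop: compare sigmaPVring simgaPVextension} (applied inside $L$) shows $R_M$ is a $\s$-\PV\ ring for $\f(y)=Ay$ over $M$, so by Lemma~\ref{lem:repbyH} the functor $\Gal^\s(L|M)$ is represented by $H_M:=(R_M\otimes_M R_M)^\f$. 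The natural $K$-$\fs$-morphism $R_M\otimes_M R_M\to R\otimes_K R$, together with restriction to $\f$-constants, identifies $H_M$ with a $k$-$\s$-Hopf quotient of $H$; by Proposition~\ref{prop:subgroup}, this exhibits $\Gal^\s(L|M)$ as a $k$-$\s$-subgroup of $G$.

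Conversely, given a $k$-$\s$-subgroup $G'$ of $G$ represented by $H'=H/I$, the coaction $\theta'\colon R\to R\otimes_k H'$ is a $K$-$\fs$-algebra homomorphism, so $R^{G'}:=\{r\in R\:|\:\theta'(r)=r\otimes 1\}$ is a $K$-$\fs$-subalgebra of $R$; then $L^{G'}$ (in the sense of~\eqref{eq:Galoiscorrespondence}) is a $K$-$\fs$-subring of $L$ closed under inverses of non-zero-divisors, and \cite[Lem.~1.3.4, p.~9]{Wibmer:thesis} makes it into a $\f$-pseudo $\s$-field. The equivalence of~\eqref{eq:Galoiscorrespondence} and~\eqref{eq:Galoiscorrespondence2} follows by clearing denominators: for any $k$-$\s$-algebra $B$ and $g\in G'(B)$, the condition $g(a/b\otimes 1)=a/b\otimes 1$ is, after specializing to $B=H'$ with the universal element, exactly the cocycle relation $\theta'(a)\cdot b=a\cdot\theta'(b)$.

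Next I would prove that the two assignments are mutually inverse. For $M\mapsto\Gal^\s(L|M)\mapsto L^{\Gal^\s(L|M)}$, the inclusion $M\subset L^{\Gal^\s(L|M)}$ is immediate. For the reverse inclusion, Lemma~\ref{lemma:torsor} applied to the $\s$-\PV\ ring $R_M$ over $M$ gives $R_M\otimes_M R_M\cong R_M\otimes_{k_M}H_M$. Writing an invariant $x=a/b$ with $a,b\in R_M$ and chasing the relation $\theta_M'(a)\cdot b=a\cdot\theta_M'(b)$ through this isomorphism shows $a\otimes b=b\otimes a$ in $R_M\otimes_{k_M}H_M$; faithful flatness of $R_M$ over $k_M$ (coming from the torsor structure) then forces $x\in M$. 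For the composition $G'\mapsto L^{G'}\mapsto\Gal^\s(L|L^{G'})$, one inclusion $G'\subset\Gal^\s(L|L^{G'})$ is formal from~\eqref{eq:Galoiscorrespondence}. For the other, set $M=L^{G'}$ and let $I_M$ be the $\s$-Hopf ideal cutting out $\Gal^\s(L|M)$; one shows $I=I_M$ by using the torsor identities $R\otimes_K R\cong R\otimes_k H$ and $R\otimes_M R\cong R\otimes_{k_M}H_M$ to reinterpret an element of $I_M\setminus I$ as producing a non-invariant in $R$ that nevertheless lies in $R^{G'}$, a contradiction.

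The main obstacle will be Step~4, namely showing that $L^{G'}$ carries enough information to recover $G'$. Unlike in the classical setting of \cite{AmanoMasuokaTakeuchi:HopfPVtheory}, the $\f$-constants $k$ need not be $\s$-closed and $L$ is only a $\f$-pseudo field, so one cannot simply pass to closed points of an algebraic group. The resolution is to combine the faithful flatness of $R$ as a comodule algebra (from Lemma~\ref{lemma:torsor}) with Proposition~\ref{prop:new phi constants are sigma constrained}, which ensures that the new $\f$-constants appearing in intermediate steps are finitely $\s$-generated and constrained over $k$; this lets one descend the Hopf-ideal identification from a sufficiently large base-change down to $k$, which is exactly what is needed to conclude $I=I_M$.
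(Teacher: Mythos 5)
There is a genuine gap, and it sits exactly where you flag the ``main obstacle'': the step showing that $G'$ is recovered from $M=L^{G'}$, i.e.\ that $I=I_M$ in your notation. Your proposed resolution --- faithful flatness of $R$ as a comodule algebra together with Proposition~\ref{prop:new phi constants are sigma constrained} --- does not close this. That proposition controls the $\f$-constants of a $\f$-simple finitely $\s$-generated algebra (it is the engine of the \emph{uniqueness} theorem), and it gives no handle on why a $\s$-Hopf ideal $I$ of $H$ is determined by the set of $I$-invariants in $L$. Your sketch (``reinterpret an element of $I_M\setminus I$ as producing a non-invariant in $R$ that nevertheless lies in $R^{G'}$'') is precisely the statement to be proved, not an argument for it: one must know that a strictly larger Hopf ideal produces a strictly smaller invariant pseudo-field, and nothing in your outline supplies that. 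There is also a small directional slip earlier: the natural map goes $R\otimes_K R\to R_M\otimes_M R_M$ (induced by $R\hookrightarrow R_M$ and enlarging the base of the tensor product), not the other way; taking $\f$-constants of this surjection is what exhibits $H_M$ as a quotient of $H$.

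The paper avoids an element-wise descent argument altogether. It composes a chain of already-established bijections: intermediate $\f$-pseudo $\s$-fields of $L|K$ correspond to certain $\fs$-coideals of $L\otimes_KL$ by \cite[Thm.~3.1.17]{Wibmer:thesis}; $\fs$-ideals of $L\otimes_kH\cong L\otimes_KR\cong R\otimes_KL$ correspond to $\s$-ideals of $H$ by Lemma~\ref{lemma: correspondence for constants} (constants of a $\f$-simple ring) via the localized torsor isomorphism~\eqref{eq:LHR}; the coideal condition on the $L\otimes_KL$ side matches the coideal condition on the $H$ side by running the same correspondence one tensor factor higher (isomorphism~\eqref{eq:HtoA}); and finally a coideal of $H$ that is an ideal is automatically a Hopf ideal by \cite[Theorem~1(iv)]{Nichols}. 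Injectivity and surjectivity of the Galois correspondence are inherited from these bijections, so the hard direction never has to be argued directly. If you want to keep your more hands-on route, you would need to first prove (or import) the analogue of \cite[Thm.~3.1.17]{Wibmer:thesis} --- that $M\mapsto\ker(L\otimes_KL\to L\otimes_ML)$ is a bijection onto $\fs$-coideals of $L\otimes_KL$ --- since that is the substantive input replacing the classical ``pass to closed points of the group'' argument in the non-$\s$-closed, pseudo-field setting.
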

\begin{proof} We will  show that there is a one-to-one correspondence between the $\s$-Hopf ideals  in $H$ and intermediate $\f$-pseudo $\s$-fields in $L|K$ given by 
$$
M = \{x \in L\:|\: 1\otimes x - x\otimes 1 \in I\cdot (L\otimes_K L)\}\quad\longleftrightarrow\quad I = H\cap\ker(L\otimes_KL\to L\otimes_ML).
$$
The proof below is partly an adaptation of \cite[Prop.~2.3]{AmanoMasuokaTakeuchi:HopfPVtheory}.
It follows from \cite[Thm.~3.1.17]{Wibmer:thesis} that there is a one-to-one correspondence between the intermediate $\f$-pseudo $\s$-fields in $L|K$ and $\fs$-coideals of $L\otimes_KL$ given by
$$
M  = \{x \in L\:|\: 1\otimes x - x\otimes 1 \in J \subset L\otimes_KL\}\quad\longleftrightarrow\quad J = \ker(L\otimes_KL\to L\otimes_ML).
$$
By Lemma~\ref{lemma: correspondence for constants}, there is a one-to-one correspondence between $\fs$-ideals of $L\otimes_{k}H$ and $\s$-ideals  of $H$ given by 
\begin{equation}\label{eq:HandL}
\idb = \ida\cap H \quad\longleftrightarrow\quad \ida = L\otimes_{k}\idb.
\end{equation} 
By localizing~\eqref{eq:torsor}, we obtain $K$-$\fs$-algebra isomorphisms
\begin{equation}\label{eq:LHR}
\varphi_1 : L \otimes_{k} H \to L\otimes_K R\quad\text{and}\quad \varphi_2 : L\otimes_{k}H \to R\otimes_K L.
\end{equation}
Therefore, we have a one-to-one correspondence between $\s$-ideals of $H$ and $\fs$-ideals of $L\otimes_KL$ given by composing~\eqref{eq:LHR} and~\eqref{eq:HandL} and using the fact that the set of ideals of the localization $L\otimes_K L$ consists of the intersection of the set of ideals in the smaller localizations $L\otimes_KR$ and $R\otimes_KL$ inside the set of ideals in $R\otimes_KR$.

We will now show that, under the above correspondence and in the above notation, $(L\otimes_KL)\cdot\ida$ is a $\fs$-coideal of $L\otimes_KL$ if and only if $\idb$ is a $\s$-Hopf ideal of $H$. For this, note that, similarly to the above, we have a one-to-one correspondence between ideals in $H\otimes_{k} H$ and $\f$-ideals in $L\otimes_KL\otimes_KL$. Indeed, by  Lemma~\ref{lemma: correspondence for constants} and isomorphsims~\eqref{eq:HtoA} and~\eqref{eq:LHR}, there is a one-to-one correspondence between $\fs$-ideals of $L\otimes_KR\otimes_KR$ (as well as those in $R\otimes_KL\otimes_KR$ and $R\otimes_KR\otimes_KL$) and $\s$-ideals  of $H\otimes_{k}H$ with 
$$
H\otimes_{k}\idb\ \ \longleftrightarrow\ \ \ida = L\otimes_K\idb\quad\text{and}\quad \idb\otimes_{k}H\ \ \longleftrightarrow\ \ \ida = \idb\otimes_KL,
$$
therefore,
$$
\idb_1 := H\otimes_{k}\idb + \idb\otimes_{k}H\quad\longleftrightarrow\quad\ida_1 :=  L\otimes_K\ida + \ida\otimes_KL
$$
under the correspondence $\ida \subset L\otimes_KL\leftrightarrow \idb \subset H$ from the preceding paragraph. Therefore, $$
\Delta(\ida)\subset \ida_1,\ \varepsilon(\ida)= 0\quad\Longleftrightarrow\quad \Delta(\idb)\subset \idb_1,\ \varepsilon(\idb)= 0.$$
By \cite[Theorem~1(iv)]{Nichols}, $\idb$ is a Hopf ideal of $H$ if and only if $\idb$ is a coideal of $H$, which finishes the proof.
To show correspondence~\eqref{eq:Galoiscorrespondence}, note that, by Lemma~\ref{lem:repbyH}, $\Gal^\s(L|M)$ is represented by $$H/{H\cap\ker(L\otimes_KL\to L\otimes_ML)}.$$ Therefore, it remains to show that 
\begin{align*}&L_1 := \{x \in L\:|\: 1\otimes x - x\otimes 1 \in \ker(L\otimes_KL\to L\otimes_ML)\} = L_2 := \big\{a/b \in L\:|\: \theta'(a)\cdot b=a\cdot\theta'(b),\ a,b \in R\big\}=\\
&=  L_3 := \big\{x \in L\:|\: \text{for all } B \in \Alg_{\Ks},\: g \in G'(B),\ g(x\otimes 1) = x\otimes 1\big\}.
\end{align*}
For every $x=a/b \in L_2$, $B \in \Alg_{\Ks}$, and $g \in G'(B)$, we have
$$
g(a/b\otimes 1)= (\theta'(a)\cdot b\otimes 1)(g)/(\theta'(b)\cdot b\otimes 1)(g) = (\theta'(b)\cdot a\otimes 1)(g)/(\theta'(b)\cdot b\otimes 1)(g) = a/b\otimes 1.
$$
Hence, $x \in L_3$. Now, for all $x=a/b \in L_3$, we have $\theta'(a)\cdot b=a\cdot\theta'(b)$ by taking $B := H$ and $g := \id_H$. Therefore, $L_2 = L_3$. For $L_1=L_3$, see the proof of \cite[Lem.~3.1.11]{Wibmer:thesis}.
\end{proof}

For $\s$-dimension, see \cite[\S A.7]{DiVizioHardouinWibmer:DifferenceGaloisofDifferential}. Let $K$ be a $\fs$-field and $R$, $L$, and $H$ be as above.

\begin{lemma}\label{lem:dimequal}
We have 
$$
\sdim_KR = \sdim_{k}H.
$$
\end{lemma}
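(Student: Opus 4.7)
The natural approach is to promote the torsor isomorphism $\mu : R\otimes_k H \xrightarrow{\sim} R\otimes_K R$ of Lemma~\ref{lemma:torsor} to a filtered isomorphism and read off the two $\s$-dimensions as leading coefficients of transcendence-degree growth. Choose a fundamental solution matrix $Y\in\Gl_n(R)$, so that $R=K\{Y,1/\det(Y)\}_\s$; as in the proof of Lemma~\ref{lem:repbyH}, set $Z:=(Y\otimes 1)^{-1}(1\otimes Y)\in\Gl_n(H)$, so $H=k\{Z,1/\det(Z)\}_\s$. For $d\geq 0$ let
$$
R_d := K\big[Y,\s(Y),\ldots,\s^d(Y),\,1/\det(Y\cdot\s(Y)\cdots\s^d(Y))\big] \subset R
$$
and define $H_d\subset H$ analogously using $Z$ in place of $Y$. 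Since $1\otimes\s^i(Y) = (\s^i(Y)\otimes 1)\cdot\s^i(Z)$, the entries of $\s^i(Z)$ and those of $1\otimes\s^i(Y)$ generate, together with $R$ and the relevant inverse determinants, the same $R$-subalgebra of $R\otimes_K R$. Consequently, $\mu$ restricts for every $d\geq 0$ to an isomorphism
$$
\mu_d : R\otimes_k H_d \xrightarrow{\sim} R\otimes_K R_d.
$$

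Next I would invoke the description of $\s$-dimension from \cite[\S A.7]{DiVizioHardouinWibmer:DifferenceGaloisofDifferential}: $\sdim_k H$ is the leading coefficient (degree) of the polynomial $d\mapsto \trdeg_k H_d$, and $\sdim_K R$ is likewise read off from $d\mapsto \trdeg_K R_d$. Setting $L:=\Q(R)$ and base-changing $\mu_d$ along $R\hookrightarrow L$ gives $L\otimes_k H_d \cong L\otimes_K R_d$ as $L$-algebras. The Krull dimension of a finitely generated algebra over a field equals the transcendence degree of any generic residue field, and this is invariant under field-theoretic base change, so picking corresponding minimal primes $\p,\p'$ one obtains
$$
\trdeg_L\Q\big((L\otimes_k H_d)/\p\big) = \trdeg_k H_d,\qquad \trdeg_L\Q\big((L\otimes_K R_d)/\p'\big) = \trdeg_K R_d,
$$
using Lemma~\ref{lemma: linearly disjoint} and the regularity of $K$ over $k=K^\f$ exploited in the proof of Corollary~\ref{cor: bound period}. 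Hence $\trdeg_k H_d = \trdeg_K R_d$ for every $d\geq 0$, and comparing the leading coefficients of these polynomial growth functions forces $\sdim_K R = \sdim_k H$.

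The main obstacle will be carrying the argument through the $\f$-pseudo rather than honest domain structure: $L$ decomposes as a finite product of $\s^e$-fields with $e=\per(L)$, and $R\otimes_K R$ and $R\otimes_k H$ split correspondingly. One has to check that the $\s$-permutation of the idempotents $e_1,\ldots,e_{\per(L)}$ is compatible with the filtrations $(R_d)$ and $(H_d)$, and that the normalizations of $\sdim_K$ (for $\f$-pseudo $\s$-field extensions) and $\sdim_k$ (for finitely $\s$-generated $k$-$\s$-Hopf algebras) used in \cite{DiVizioHardouinWibmer:DifferenceGaloisofDifferential} agree with the leading-coefficient recipe on the two sides of $\mu_d$.
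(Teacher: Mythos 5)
Your proof is correct, and it follows the same skeleton as the paper's --- compare the two canonical filtrations level by level and pass to the limit defining $\sdim$ --- but it performs the level-$d$ comparison by a different mechanism. The paper's proof is a citation chain: by Lemma~\ref{lemma:higherorder} each $R_d$ is a classical \PV\ ring for $\f(y)=A_dy$, the corresponding piece $H_d=(R_d\otimes_KR_d)^\f$ is the coordinate ring of its classical Galois group, and \cite[Thm.~1.13]{SingerPut:difference} supplies the equality of the transcendence degree of the \PV\ extension with the dimension of its Galois group at each level. You instead restrict the torsor isomorphism $\mu$ of Lemma~\ref{lem:Hopf} to the filtration --- the identity $1\otimes\s^i(Y)=(\s^i(Y)\otimes 1)\cdot\s^i(Z)$ does show that $\mu(R\otimes_kH_d)=R\otimes_KR_d$ --- and then compare Krull dimensions after base change to $L$. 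Since the classical theorem is itself proved via exactly this torsor property, your route is a self-contained unwinding of the paper's citation: it buys independence from the classical dimension theorem at the cost of handling the pseudo-field decompositions by hand (your closing paragraph), whereas the classical statement, already formulated over $\f$-pseudo fields in \S\ref{sec:classicalPV}, absorbs that bookkeeping. Two minor points: for the non-domains appearing here the invariant to compare is the Krull dimension, i.e.\ the \emph{maximum} of the transcendence degrees over all minimal primes, not the transcendence degree at an arbitrary generic point, and what you actually need is the base-change invariance $\dim(A\otimes_kK)=\dim A$ for finitely generated $k$-algebras, which holds; also the functions $d\mapsto\dim H_d$ need not literally be polynomials, but since the two sequences coincide termwise, whatever limit or leading-term functional \cite[Def.~A.25]{DiVizioHardouinWibmer:DifferenceGaloisofDifferential} extracts agrees on both sides, so the conclusion stands.
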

\begin{proof} Let $Y\in \Gl_n(R)$ be
a fundamental solution matrix of $\f(y) = Ay$ and $Z=(Y\otimes 1)^{-1}(1\otimes Y)\in\Gl_n(R\otimes_K R)$. Then $$R = K\{Y,1/\det(Y)\}_\s\quad \text{and}\quad H = k\{Z,1/\det(Z)\}_\s.$$
The claim now follows from \cite[Def.~A.25]{DiVizioHardouinWibmer:DifferenceGaloisofDifferential}, Lemma~\ref{lemma:higherorder}, and \cite[Thm.~1.13]{SingerPut:difference}.
\end{proof}

\subsection{Isomonodromic difference equations}\label{sec:isomonodromic}
In this section, we develop a $\s$-Galois treatment for isomonodromic difference equations. In particular, in Theorem~\ref{thm:isomcrit}, not assuming that the field $k=K^\f$ is difference closed, we give a criterion, which says that $\f(y)=Ay$ is isomonodromic if and only if the matrices in its $\s$-Galois group all satisfy an equation of a special form~\eqref{eq:isomGalois}. This result is a difference analogue of  the corresponding results for isomonodromic differential equations, \cite[Prop.~3.9]{PhyllisMichael} and \cite[Thm.~6.6]{GO}. We further illustrate this by considering a $q$-hypergeometric equation in Example~\ref{ex:qhyp}.
\begin{defi} The system $\f(y)=Ay$ is called {\it isomonodromic} if there exists $B \in \Gl_n(K)$ such that 
\begin{equation}\label{eq:CI}
\f(B) = \s(A)BA^{-1}.
\end{equation}
\end{defi}

\begin{theo}\label{thm:isomcrit} The equation $\f(y)=Ay$ is isomonodromic if and only if there exists $D \in \Gl_n(k)$ such that the
following equation is in the defining ideal of the $\s$-Galois group $G$:
\begin{equation}\label{eq:isomGalois}
\s(x_{ij})=D^{-1}(x_{ij})D.
\end{equation}
Moreover, if~\eqref{eq:isomGalois} is in the defining ideal of $G$, then there exists a finitely generated $\s$-field extension $F$ of $k$ and $C \in \Gl_n(F)$
such that 
\begin{equation}\label{eq:constant}
\s\big(C^{-1}(x_{ij})C\big)=C^{-1}(x_{ij})C
\end{equation} is in the defining ideal of $G$, that is, $G$ is conjugate over $F$ to a group of matrices with $\s$-constant entries.
\end{theo}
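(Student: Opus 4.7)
The plan is to work throughout with a fundamental solution matrix $Y \in \Gl_n(R)$ and the matrix $Z := (Y \otimes 1)^{-1}(1 \otimes Y) \in \Gl_n(R \otimes_K R)$, which $\s$-generates $H$ by Lemma~\ref{lem:repbyH}. Under this presentation the relation $\s(x_{ij}) = D^{-1}(x_{ij})D$ lies in the defining ideal of $G$ precisely when $\s(Z) = D^{-1}ZD$ holds in $H$, so the entire theorem reduces to a statement about $Z$.

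For the forward direction, given $B \in \Gl_n(K)$ with $\f(B) = \s(A)BA^{-1}$, I would first observe that both $\s(Y)$ and $BY$ are fundamental solution matrices for $\f(w) = \s(A)w$: the former since $\s$ and $\f$ commute, the latter by the isomonodromy identity. By Remark~\ref{rem: fundamental solution matrix}, there exists $D \in \Gl_n(R^\f) = \Gl_n(k)$ with $\s(Y) = BYD$. Exploiting that $B, D$ have entries in $K$, so that $B \otimes 1 = 1 \otimes B$ and $D \otimes 1 = 1 \otimes D$ in $R \otimes_K R$, a direct calculation collapses the middle factors to
\[\s(Z) = (\s(Y) \otimes 1)^{-1}(1 \otimes \s(Y)) = (BYD \otimes 1)^{-1}(1 \otimes BYD) = D^{-1} Z D,\]
which is the required ideal membership.

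For the reverse direction, assuming $\s(Z) = D^{-1}ZD$ in $H$, I would set $B := \s(Y)D^{-1}Y^{-1} \in \Gl_n(R)$ and show that $B$ descends to $K$. Substituting $(1 \otimes Y) = (Y \otimes 1)Z$ and using the hypothesis $\s(Z) = D^{-1}ZD$, a short computation shows $1 \otimes B = B \otimes 1$ in $R \otimes_K R$. Embedding into $L \otimes_K L$ and invoking the Galois correspondence (Theorem~\ref{thm:Galoiscorrespondence}) applied to $G' = G$, for which $L^G = K$, one concludes $B \in \Gl_n(K)$. The isomonodromy relation $\f(B) = \s(A)BA^{-1}$ then falls out by applying $\f$ to the defining formula for $B$ and using $\f(D) = D$ together with $\f(Y) = AY$.

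For the moreover statement, I would construct $F$ by adjoining a formal solution of $\s(x) = D^{-1}x$: let $C = (c_{ij})$ be an $n \times n$ matrix of algebraically independent indeterminates over $k$, and set $F := \Q(k[C, 1/\det(C)])$ with $\s|_k$ extended by $\s(C) := D^{-1}C$. Since $D^{-1} \in \Gl_n(k)$, this prescription defines an injective endomorphism of $F$, making $F$ a finitely $\s$-generated $\s$-field extension of $k$ in which $\s(C^{-1}) = C^{-1}D$; then
\[\s(C^{-1}ZC) = (C^{-1}D)(D^{-1}ZD)(D^{-1}C) = C^{-1}ZC\]
in $H \otimes_k F$, so \eqref{eq:constant} belongs to the defining ideal of $G$ after base change to $F$. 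The main obstacle I anticipate is the descent step in the reverse direction: rigorously identifying $\{b \in L : b \otimes 1 = 1 \otimes b \text{ in } L \otimes_K L\}$ with $K$. This is essentially the torsor property of $R$ over $K$ (Lemma~\ref{lemma:torsor}) read through the Galois correspondence; once it is secured, everything else is routine matrix algebra in tensor products.
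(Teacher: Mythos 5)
Your proof is correct and follows essentially the same route as the paper's: the same derivation of $\s(Y)=BYD$ from the isomonodromy identity, the same definition $B:=\s(Y)D^{-1}Y^{-1}$ with descent to $K$ via Theorem~\ref{thm:Galoiscorrespondence}, and the same conjugating matrix $C$ with $\s(C)=D^{-1}C$ for the ``moreover'' part. The only difference is notational --- you phrase the ideal membership via the universal matrix $Z\in\Gl_n(H)$ while the paper uses the functorial points $C_g$ for $g\in G(S)$ --- and these are equivalent.
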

\begin{proof} 
Let $Y \in \Gl_n(R)$ be a fundamental solution matrix. 
Let there exist $B \in \Gl_n(K)$ such that~\eqref{eq:CI} is satisfied. We have
$$
\f\big(\s(Y)^{-1}BY\big) = \s(\f(Y))^{-1}\f(B)\f(Y)=\s(AY)^{-1}\s(A)BA^{-1}AY=\s(Y)^{-1}BY.
$$
Therefore, there exists $D \in \Gl_n(k)$ such that $
\s(Y)=BYD$. 
For every $k$-$\s$-algebra $S$ and  $g \in G(S)$,  let $C_g \in \Gl_n(S)$ be such that $g(Y) = YC_g$. Then, on the one hand, $$g(\s(Y)) = g(BYD) = BYC_gD.$$ On the other hand, $$g(\s(Y))= \s(g(Y))=\s(YC_g) = \s(Y)\s(C_g)=BYD\s(C_g).$$ Therefore, for all $g \in G$, we have
$$\s(C_g) = D^{-1}C_gD,$$
showing~\eqref{eq:isomGalois}. To show~\eqref{eq:constant}, let $F$ be a $\s$-field generated over $k$ by
the entries of an invertible matrix $C$ satisfying $\s(C)=D^{-1}C$. Then, 
$$\s\big(C^{-1}C_gC\big)=\s(C)^{-1}\s(C_g)\s(C)=C^{-1}DD^{-1}C_gDD^{-1}C=C^{-1}C_gC.$$

Suppose now that, for all $k$-$\s$-algebras $S$ and  $g \in G(S)$, we have
$\s(C_g) = D^{-1}C_gD$, where $C_g := Y^{-1}g(Y)$. Let $B := \s(Y)D^{-1}Y^{-1}$. Then, for all $g \in G$,
$$
g(B) = \s(YC_g)D^{-1}(YC_g)^{-1} = \s(Y)D^{-1}C_gDD^{-1}C_g^{-1}Y^{-1} = B.
$$
By Theorem~\ref{thm:Galoiscorrespondence}, $B \in \Gl_n(K)$. We, moreover, have
$$
\f(B) = \f(\s(Y))D^{-1}\f(Y)^{-1}=\s(AY)D^{-1}(AY)^{-1}=\s(A)\s(Y)D^{-1}Y^{-1}A^{-1}=\s(A)BA^{-1},
$$
showing~\eqref{eq:CI}.
\end{proof}

\begin{ex}\label{ex:qhyp}
Consider a $q$-hypergeometric equation 
\begin{equation}\label{eq:qdiff}
y\big(q^2x\big) -\frac{2ax-2}{a^2x -1}y(qx) + \frac{x-1}{a^2x -1}y(x) = 0.
\end{equation}
It is shown in~\cite{Roques} that, over $\C(x)$, if $a \notin q^\ZZ$, then, if $a^2 \notin q^\ZZ$, then the
difference Galois group of~\eqref{eq:qdiff} is $\Gl_2(\C)$, otherwise it is $\Sl_2(\C)$.
Equation~\eqref{eq:qdiff} has been also studied from the differential-parametric viewpoint in~\cite[Ex.~3.14]{HardouinSinger:DifferentialGaloisTheoryofLinearDifferenceEquations}.
Let now $\CC$ be any field such that~\eqref{eq:qdiff} has a $\s$-PV extension over $\CC(x,a)$, with $a$ being transcendental over $\CC(x,a)$, $\f$ and $\s$ acting as $\id$ on $\CC$, and $$\f(x) = qx,\ \f(a) = a,\ \s(x) = x,\ \s(a)=qa.$$
The existence can be shown as in Proposition~\ref{prop:special existence}.
A calculation in {\sc Maple}, similar to the one given in~\cite{CharlotteMichaelMaple}, but using the procedure {\tt RationalSolution} in the {\tt QDifferenceEquations} package, shows that~\eqref{eq:qdiff}, once transformed into the matrix form, is isomonodromic over $\CC(x,a)$ with
$$
B =
\begin{pmatrix}\cfrac{1}{a^2x-1}&-\cfrac{2a}{(a+1)(a^2x-1)}\\ \cfrac{2a(x-1)}{(a+1)(a^2x-1)(a^2qx-1)}&\cfrac{3a-1+(a^3-3a^2)x}{(a+1)(a^2x-1)(a^2qx-1)}
\end{pmatrix}.$$
Therefore,~\eqref{eq:CI} is in the defining ideal of the $\s$-PV group $G$ of~\eqref{eq:qdiff} by Theorem~\ref{thm:isomcrit}. 
It follows from \cite[Cor.~3.3.2.1]{Andre} and~\cite[Thm.~10]{Roques} that the (non-$\s$-parametric) PV group of~\eqref{eq:qdiff} over $\CC(x,a)$ is $\Gl_2$.  Similarly to \cite[Prop.~6.21]{HardouinSinger:DifferentialGaloisTheoryofLinearDifferenceEquations}, it follows from Theorem~\ref{thm:Galoiscorrespondence} that $G$ is Zariski dense in $\Gl_2$. It follows from Theorem~\ref{thm:isomcrit} that, $G$ is conjugate to $\Gl_2(\CC)$ over a (proper, as {\tt RationalSolution} shows) finitely-generated $\s$-field extension of $\CC(a)$, where $\Gl_2(\CC)$ is defined by $\Gl_2(\CC)(B) = \{g\in\Gl_2(B)\:|\: \s(g)=g\}$ for every $\CC(a)$-$\s$-algebra $B$.
\end{ex}

\section{Applications and examples}\label{sec:examples}
In this section, we will illustrate how our Galois theory can be used to study difference and differential algebraic properties of functions. We start by showing a general $\s$-independence criterion in Theorem~\ref{thm:app} (see also \cite[Thm.~4.1]{OvchinnikovTrushinetal:GaloisTheory_of_difference_equations_with_periodic_parameters}). In \S\ref{thm:app}, we show a $\s$-independence criterion over the field of meromorphic function with Nevanlinna growth order less than one (Theorem~\ref{thm:Nev}). For this, we need some preparatory work, Lemmas~\ref{lem:Nev} and~\ref{lem:fromM1}, which are
interesting on their own as they generalize a natural modification of a classical result in complex analysis \cite{BankKaufman}. We then show how to apply our results in practice in Theorem~\ref{thm:6}, which is followed by illustrative examples in \S\ref{sec:finalexamples}.

\subsection{General result}
\begin{theo}\label{thm:app}
Let $F$ be a $\fs$-field containing the field $\C(z)$ with
\begin{equation}\label{eq:actiononz}
\f(z) = a_1z+a_2,\quad\s(z)=b_1z+b_2,\quad a_1,a_2,b_1,b_2 \in \C,\quad a_1b_1\ne 0,\quad \f\s=\s\f,\quad \f^n\ne \id,\ n\in\NN,
\end{equation} and 
$\kk:=F^{\f}$. 
 Let $0\ne f\in F$ and $0\ne a\in {\C(z)}$ be such
that $f$ is 
a 
solution of
\begin{equation}\label{eq:exdiffeq}
\f(y) = ay.
\end{equation}
Then $f$ is $\s$-algebraically dependent over the field $\kk(z)$ if
and only if
\begin{equation}\label{eq:aqbb}
\varphi(a)=\f(b)/b
\end{equation}
for some $0\ne b \in \C(z)$ and
$1\ne\varphi(x)=x^{n_0}{\s(x)}^{n_1}\cdot\ldots\cdot
{\s^{t-1}(x)}^{n_{t-1}}$.
\end{theo}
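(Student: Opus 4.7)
The plan is to treat $\f(y)=ay$ as a $1\times 1$ system, so that its $\s$-Galois group $G$ embeds as a $\s$-closed subgroup of $\Gm$ and, by Proposition~\ref{prop:Gmphi}, any proper such subgroup is cut out by a single multiplicative relation $\varphi(x)=1$. The dichotomy in the theorem should then correspond to $G=\Gm$ versus $G\subsetneq\Gm$.

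For the direction $(\Leftarrow)$, assume $\varphi(a)=\f(b)/b$ with $0\ne b\in\C(z)$ and $\varphi(x)=x^{n_0}\s(x)^{n_1}\cdots\s^{t-1}(x)^{n_{t-1}}\ne 1$. A direct computation gives $\f(\varphi(f))=\varphi(a)\varphi(f)=(\f(b)/b)\varphi(f)$, so $\varphi(f)/b\in F^\f=\kk$. Hence $\varphi(f)=c\cdot b$ for some $c\in\kk$, and after multiplying by a sufficiently large common power of $f,\s(f),\ldots,\s^{t-1}(f)$ to clear any negative exponents, one obtains a non-zero element of $\kk(z)\{y\}_\s$ that vanishes at $f$.

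For $(\Rightarrow)$, set $K:=\kk(z)$. The hypothesis $\f^n\ne\id$ forces $K^\f=\kk$, so $L:=K\langle f\rangle_\s\subset F$ satisfies $L^\f\subset F^\f=\kk=K^\f$ and is therefore a $\s$-\PV\ extension for $\f(y)=ay$; the corresponding $\s$-\PV\ ring is $R:=K\{f,1/f\}_\s$ by Proposition~\ref{prop: compare sigmaPVring simgaPVextension}. The $\s$-Galois group $G=\Gal^\s(L|K)$ is represented by $H=(R\otimes_K R)^\f$ (Lemma~\ref{lem:repbyH}) and, via the comodule structure on $R$, embeds into $\Gm$ over $\kk$. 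Because $f$ is $\s$-algebraic over $K$, the transcendence degrees of the iterated classical \PV\ rings $R_d$ from Lemma~\ref{lemma:higherorder} remain bounded in $d$, so $\sdim_K R=0$, and hence $\sdim_\kk H=0$ by Lemma~\ref{lem:dimequal}; as $\sdim_\kk\Gm=1$, this forces $G\subsetneq\Gm$. Proposition~\ref{prop:Gmphi} then supplies a non-trivial multiplicative monomial $\varphi$ with $\varphi-1$ in the defining ideal of $G$. Setting $Z=(f\otimes 1)^{-1}(1\otimes f)\in H$, the relation $\varphi(Z)=1$ unfolds to $\varphi(f)\otimes 1=1\otimes\varphi(f)$ in $R\otimes_K R\hookrightarrow L\otimes_K L$. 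The standard basis argument for a field extension $K\subset L$ shows that $u\otimes 1=1\otimes u$ forces $u\in K$, so $\varphi(f)=:b_0\in K^\times$; applying $\f$ then yields $\varphi(a)=\f(b_0)/b_0$.

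The final step, and the one I expect to be the main obstacle, is the descent of $b_0$ from $\kk(z)$ to $\C(z)$, since the $\s$-Galois machinery only delivers the relation over $\kk$. I would write $b_0=\sum_{i=1}^n c_if_i$ with $c_1,\dots,c_n\in\kk$ chosen $\C$-linearly independent and $f_i\in\C(z)$. The identity $\f(b_0)=\varphi(a)b_0$ then becomes $\sum_i c_i\bigl(\f(f_i)-\varphi(a)f_i\bigr)=0$, with each parenthesis in $\C(z)$. Since $\C(z)$ is purely transcendental, hence regular, over $\C$, the fields $\kk$ and $\C(z)$ are linearly disjoint over $\C$, so the $c_i$ remain $\C(z)$-linearly independent in $\kk(z)$ and each $\f(f_i)=\varphi(a)f_i$. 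Any non-zero $f_i$ furnishes the required $b\in\C(z)^\times$.
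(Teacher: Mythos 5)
Your argument up through the identity $\varphi(a)=\f(b_0)/b_0$ with $b_0=\varphi(f)\in\kk(z)$ follows the paper's proof essentially verbatim (the paper likewise passes through $\sdim$, Proposition~\ref{prop:Gmphi}, and Theorem~\ref{thm:Galoiscorrespondence}; your unfolding of $\varphi(Z)=1$ into $\varphi(f)\otimes 1=1\otimes\varphi(f)$ is just a more explicit form of the fixed-field step). The genuine issues are in your final descent from $\kk(z)$ to $\C(z)$. First, the decomposition $b_0=\sum_{i=1}^n c_i f_i$ with $c_i\in\kk$ and $f_i\in\C(z)$ presupposes that $b_0$ lies in the subring $\kk\otimes_\C\C(z)$ of $\kk(z)$, i.e., that $b_0$ can be written with a denominator in $\C[z]$. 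This is false for a general element of $\kk(z)$: for $c\in\kk\smallsetminus\C$ the element $1/(z-c)$ admits no such expression, since a relation $Q(z)=(z-c)P(z)$ with $Q\in\C[z]$ would force $c\in\C$. So you must first prove $b_0\in\kk\otimes_\C\C(z)$; this does hold for your particular $b_0$ because it satisfies $\f(b_0)=\varphi(a)b_0$ with $\varphi(a)\in\C(z)^\times$, and the ``denominator ideal'' $\ida=\{g\in\C(z)\otimes_\C\kk\mid gb_0\in\C(z)\otimes_\C\kk\}$ is then a non-zero $\f$-ideal of the $\f$-simple ring $\C(z)\otimes_\C\kk$ — exactly the argument the paper uses in Lemma~\ref{lem:fromM1} for $M_{<1}$ — but this step is absent from your write-up. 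Second, your justification of linear disjointness is wrong as stated: regularity of $\C(z)$ over $\C$ does not imply that $\kk$ and $\C(z)$ are linearly disjoint inside $F$ (a field is regular over $\C$ yet not linearly disjoint from itself). What is actually needed is that $z$ is transcendental over $\kk$, equivalently $\C(z)^\f=\C$, and this is precisely where the hypothesis $\f^n\ne\id$ enters; the paper proves this transcendence explicitly, and your proposal never uses $\f^n\ne\id$ at the point where it is indispensable (your invocation of it to get $K^\f=\kk$ is superfluous, since $K^\f\subset F^\f=\kk$ automatically).

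For comparison, the paper sidesteps both issues by a different descent: it writes $b=\bar b/d$ with $\bar b,d\in\kk[z]$ (which is always possible), clears denominators to get the polynomial identity $\varphi(\bar a)\f(d)\bar b-\varphi(c)\f(\bar b)d=0$ in $z$, regards the coefficients of $\bar b$ and $d$ as unknowns, and observes that the resulting system of polynomial equations has coefficients in $\C$; since it has a non-trivial solution over $\kk$ and $\C$ is algebraically closed, it has one over $\C$. Your linear-disjointness route is viable and arguably cleaner once the two missing steps above are supplied, but as written it has a real gap.
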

\begin{proof}
 If~\eqref{eq:aqbb} holds, then
$$
\f(\varphi(f)/b)=
\varphi(\f(f))/\f(b)=\varphi(af)/\f(b)=\varphi(a)\varphi(f)/\f(b)=\varphi(f)/b.
$$
Therefore,
$
\varphi(f)/b=c\in F^\f=\kk$.
Thus,
$
\varphi(f)=c\cdot b\in \kk(z)$,
which gives a $\s$-algebraic dependence for $f$ over $\kk(z)$.

Assume now that $f$ is $\s$-algebraically dependent over
$\kk(z)$.  
Let $L$ be the smallest
$\fs$-subfield in $F$ containing $\kk(z)$ and $f$. 
Since $\kk \subset L^\f \subset F^\f = \kk$,  
the $\fs$-field $L$ is a $\s$-\PV\ extension over
$\kk(z)$ for equation~\eqref{eq:exdiffeq}. 
It follows from Lemma~\ref{lem:dimequal} and Proposition~\ref{prop:Gmphi} that $f$ is
$\s$-algebraically dependent over $\kk(z)$ if and only if the
$\s$-Galois group $G$ of $L|K$ is a
proper $\s$-algebraic subgroup of $\Gm$. Then, by
Proposition~\ref{prop:Gmphi}, there exists a multiplicative
$
\varphi\in \kk\{x,1/x\}_{\s}
$
 such that the ideal of $G$ contains the equation
$
\varphi(x) = 1$. Therefore, for every $\kk$-$\s$-algebra  $B$ and $g\in G(B)$, we have
$$
g(\varphi(f))= \varphi(g(f))=\varphi(c_g\cdot
f) = \varphi(c_g)\cdot \varphi(f)=1\cdot \varphi(
f)= \varphi(f).
$$
Hence, by Theorem~\ref{thm:Galoiscorrespondence}, we have
$
b:= \varphi(f) \in \kk(z)$. Since $f\ne 0$ 
and $\varphi$ is
multiplicative, $\varphi(f)\ne 0$.  
Therefore,
\begin{align}\label{eq:dep}
\varphi(a) = \varphi(\f(f)/f)=
\f(\varphi(f))/\varphi(f)= \f(b)/b.
\end{align}

We will show now that $b$ can be chosen from $\C(z)$
satisfying~\eqref{eq:aqbb}. For this, first note that $z$ is transcendental over $\kk$. Indeed, for all $n\in\NN$ and $a_0,\ldots,a_n \in\kk$,
$$
a_nz^n+\ldots+a_1z+a_0=0
$$ implies that, for all $q\in\NN$,
$$
a_n(\f^q(z))^n+\ldots+a_1(\f^q(z))+a_0=0.
$$
This implies that there exists $r\in\NN$ such that $z=\f^r(z)$, which contradicts~\eqref{eq:actiononz}.
Now, we have the
equalities $a=\bar a/c$ and $b=\bar b /d$, where
$\bar a, c\in \C[z]$ and $\bar b, d\in \kk[z]$. Consider the
coefficients of $\bar b$ and $d$ with respect to $z$ as new indeterminates. 
 Equation~\eqref{eq:dep} is equivalent to
$$
\varphi(\bar a/c)=\f{\left(\bar b/d\right)}\big/{\left(\bar b/d\right)}.
$$
So, we have
\begin{equation}\label{eq:phiabcd}
\varphi(\bar a)\cdot\f(d)\cdot\bar b-\varphi(c)\cdot\f(\bar b)\cdot d=0.
\end{equation}
The left-hand side of equation~\eqref{eq:phiabcd} is a polynomial in
$z$. Hence, equation~\eqref{eq:dep} can be considered as a system of polynomial equations
 given by the equalities for
all coefficients.
 Since the field $\C$ is  algebraically closed, existence of
$\bar b$ and $d$ with coefficients in $\kk$ implies
existence of  
$\bar b$ and $d$ with coefficients in
$\C$.
\end{proof}

\subsection{Meromorphic functions and Nevanlinna property}\label{sec:Nev}

Let $M$ be the $\fs$-field of meromorphic functions on the plane with
$$
\f(f)(z) := f(z+1),\quad \s(f)(z) := f(z+a_\s),\quad f \in M,\quad z, a_\s \in \C.
$$
Also, let $\kk := M^\f$, which is the field of $1$-periodic meromorphic functions.
For  $f\in M$, the standard  Nevanlinna characteristics  $m(r,f)$, $N(r,f)$, and $T(r,f)$ were introduced in \cite[pp.~6, 12]{Nevanlinna1974} (see also \cite{GoldOst,BankKaufman,ChiangFeng}).
Let
\begin{equation}\label{eq:Nevlimit}
M_{<1} := {\big\{g \in M\:\big|\: T(r,g) = o(r),\ r\to+\infty\big\}},
\end{equation}
which is a $\fs$-field as well \cite[8.~Prop.]{BankKaufman}. Note that 
\begin{equation}\label{eq:CzK}
\C(z) \subsetneq M_{<1}.
\end{equation}

The proof of the following result, which we need to prove Theorem~\ref{thm:Nev}, was suggested by D.~Drasin and S.~Merenkov, to whom the authors are highly grateful, as
a modification of \cite[7.~Lem.~(c)]{BankKaufman}.
\begin{lemma}\label{lem:Nev}
Let $f\in M$ and there exist $R\in \C(z)$ such that, for all $z \in \C$, 
\begin{equation}\label{eq:fzplusone}
f(z+1)=R(z)\cdot f(z).
\end{equation}
If $f\in M_{<1}$, then $f \in \C(z)$.
\end{lemma}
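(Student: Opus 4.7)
The plan is to exploit the functional equation $f(z+1)=R(z)f(z)$ to control the divisor of $f$, then use the slow growth condition to force $f$ to be rational. Writing $\mathrm{ord}_w(\cdot)$ for the order of a meromorphic function at $w\in\C$ (negative for poles), the equation yields the pointwise identity
\[
\mathrm{ord}_{w+1}(f)-\mathrm{ord}_w(f)=\mathrm{ord}_w(R),\qquad w\in\C.
\]
Since $R\in\C(z)$, only finitely many orbits $w+\ZZ$ meet $\mathrm{supp}(\text{div}(R))$, and on each orbit the jumps $\mathrm{ord}_{w+n}(R)$ are nonzero for only finitely many $n\in\ZZ$. Therefore, on every orbit $\alpha+\ZZ$, the sequence $n\mapsto \mathrm{ord}_{\alpha+n}(f)$ is eventually constant as $n\to+\infty$ and as $n\to-\infty$; call these eventual values $N_+(\alpha)$ and $N_-(\alpha)$.

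The first key step is to show $N_\pm(\alpha)=0$ for every $\alpha$. If, say, $N_+(\alpha)>0$ for some $\alpha$, then $f$ vanishes (with multiplicity at least one) at all points of $\alpha+\NN_{\gg 0}$, which contributes at least $cr$ zeros of $f$ in the disc $|z|\leq r$; hence $N(r,1/f)\geq cr-O(1)$. Combined with the first main theorem this gives $T(r,f)=T(r,1/f)+O(1)\geq cr-O(1)$, contradicting $f\in M_{<1}$. The case $N_+(\alpha)<0$ is treated by applying the same argument to $1/f$ (which also lies in $M_{<1}$ and satisfies an analogous equation with $R$ replaced by $1/R$), and $N_-(\alpha)$ is handled symmetrically using $n\to-\infty$. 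Summing the jumps across an orbit yields $\sum_{n\in\ZZ}\mathrm{ord}_{\alpha+n}(R)=N_+(\alpha)-N_-(\alpha)=0$ for every orbit.

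The second step is a telescoping construction. Since the divisor of $R$ has total multiplicity zero on every $\ZZ$-orbit and is supported on finitely many orbits, we can build $g\in\C(z)^\times$ as follows: on each orbit $\alpha+\ZZ$ on which $R$ has nonzero multiplicity, pick a representative and use the partial sums $G(\alpha+n):=\sum_{k<n}\mathrm{ord}_{\alpha+k}(R)$ (which are eventually zero in both directions) as the prescribed orders of $g$ at $\alpha+n$; this defines the divisor of $g$. Form $g$ as the corresponding product of linear factors $(z-\beta)^{G(\beta)}$, a finite product. By direct computation, $g(z+1)/g(z)$ and $R(z)$ have the same divisor on $\C$, so their ratio is a nonzero constant $c\in\C^\times$, i.e.\ $R(z)=c\cdot g(z+1)/g(z)$. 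Setting $h:=f/g\in M_{<1}$, the equation becomes $h(z+1)=c\cdot h(z)$.

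The final step disposes of $h$ by a growth argument. The general meromorphic solution of $h(z+1)=c\cdot h(z)$ has the form $h(z)=e^{\mu z}p(z)$, where $e^\mu=c$ and $p$ is a $1$-periodic meromorphic function. For $h$ to lie in $M_{<1}$, the exponential factor must have slow growth, which forces $\mu=0$, hence $c=1$ and $h$ is $1$-periodic. A non-constant $1$-periodic meromorphic function on $\C$ has characteristic growth $T(r,h)\geq c'r$ (since $h=F(e^{2\pi i z})$ for a non-constant meromorphic $F$ on $\C^\times$, and the preimage of any zero or pole of $F$ in a disc $|z|\leq r$ contains $\asymp r$ points). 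Thus $h\in M_{<1}$ forces $h$ constant, and $f=h\cdot g\in\C(z)$. The main obstacle is the telescoping construction of $g$ together with the verification that the obstruction to $N_\pm(\alpha)=0$ is really linear in $r$; both become clean once one thinks in terms of orbits and divisors as above.
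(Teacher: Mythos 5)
Your overall strategy is the same as the paper's: use the functional equation together with $T(r,f)=o(r)$ to control the zeros and poles of $f$, clear them with a rational function, and finish with a growth lemma. Your orbit-by-orbit divisor analysis and the telescoping construction of $g$ are correct, and they make explicit the step the paper delegates to Bank--Kaufman (that the zeros and poles of $f$ are confined); the counting argument comparing $N(r,1/f)\geq cr-O(1)$ with $T(r,f)=o(r)$ is exactly right.

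The final step, however, is flawed as written, in two places. First, ``for $h$ to lie in $M_{<1}$, the exponential factor must have slow growth, which forces $\mu=0$'' does not follow: the decomposition $h=e^{\mu z}p(z)$ is not unique ($\mu$ may be shifted by $2\pi i\ZZ$ at the cost of multiplying $p$ by a unimodular exponential), and the growth of one factor of a product is not controlled by the growth of the product --- e.g.\ $h=1$ admits the decomposition $\mu=2\pi i$, $p(z)=e^{-2\pi iz}$, in which the ``exponential factor'' has linear characteristic. Second, your justification that a non-constant $1$-periodic meromorphic $h=F(e^{2\pi iz})$ satisfies $T(r,h)\geq c'r$ by counting preimages of zeros or poles of $F$ breaks down when $F$ has no zeros or poles on $\C^\times$ (e.g.\ $h=e^{2\pi iz}$ or $h=e^{e^{2\pi iz}}$); the inequality is still true, but needs a separate argument there. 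Both defects are repaired by finishing as the paper does: from $h(z+1)=c\,h(z)$ the zero set and pole set of $h$ are invariant under $z\mapsto z+1$, so your own counting argument shows $h$ has no zeros or poles at all; hence $h=e^{H}$ with $H$ entire, and an entire zero-free function with $T(r)=o(r)$ is constant (this is the Goldberg--Ostrovskii lemma the paper invokes). Then $c=1$ and $f=hg\in\C(z)$.
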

\begin{proof}
Let $L >0$ be a real number such that all finite poles and zeroes of $R$ lie in $$D(L) := \{c \in\C\:|\: |c| < L\}.$$
Similarly to the proof of \cite[7.~Lem.~(c)]{BankKaufman}, one shows that~\eqref{eq:fzplusone} and~\eqref{eq:Nevlimit} imply that
all finite poles and zeroes of $f$ lie in $D(L)$. This implies that there exists a rational function $h$ such that $g:=hf$ is an entire function with no zeroes. Since $M_{<1}$ is a field and $h \in M_{<1}$, we have $g=hf\in M_{<1}$. Hence, it follows from \cite[Lem.~I.6.2]{GoldOst} that $g$ is constant. Therefore, $f=g/h$ is rational.
\end{proof}

\begin{cor} \label{cor: constants of M1}

 We have
$$
\kk\cap M_{<1} = \C.
$$
\end{cor}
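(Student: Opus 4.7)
The plan is to derive the corollary from Lemma~\ref{lem:Nev} by applying it with the trivial multiplier $R(z)=1$, and then to observe that a $1$-periodic rational function is forced to be constant. The inclusion $\C \subseteq \kk \cap M_{<1}$ is obvious, so everything reduces to the reverse containment.

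First, I would take $f \in \kk \cap M_{<1}$. The condition $f \in \kk = M^\f$ says exactly that $f(z+1)=f(z)$ for all $z \in \C$, which is equation~\eqref{eq:fzplusone} with the particular rational function $R(z)=1 \in \C(z)$. Since $f$ also lies in $M_{<1}$, Lemma~\ref{lem:Nev} applies and yields $f \in \C(z)$.

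It therefore remains to argue that a $1$-periodic element of $\C(z)$ must be constant. Write $f=p/q$ in lowest terms with $p,q \in \C[z]$. The identity $p(z+1)q(z)=p(z)q(z+1)$ together with unique factorization in $\C[z]$ forces $p(z+1)=c\,p(z)$ and $q(z+1)=c\,q(z)$ for some $c \in \C^\times$; comparing leading coefficients gives $c=1$, and then a non-constant polynomial cannot satisfy $p(z+1)=p(z)$ (its degree would be preserved while it would have infinitely many fixed values). Hence $p$ and $q$ are constants, so $f \in \C$.

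There is no serious obstacle here, since all the analytic content has been packaged into Lemma~\ref{lem:Nev}; the corollary is essentially the special case $R=1$ combined with the elementary fact about periodic rational functions.
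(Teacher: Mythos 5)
Your proof is correct and follows the route the paper intends: the corollary is stated without proof precisely because it is the special case $R=1$ of Lemma~\ref{lem:Nev}, combined with the elementary observation that a $1$-periodic rational function is constant. Your verification of that last elementary fact (via coprimality and leading coefficients, or simply because a non-constant rational function attains each value only finitely often while a periodic one attains $f(0)$ at every integer) is sound, so there is nothing to add.
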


We will need one more complex-analytic result (which has an algebraic proof) to prove Theorem~\ref{thm:Nev} as well.

\begin{lemma}\label{lem:fromM1}
Let $a\in\C(z)\smallsetminus\{0\}$. Assume that there exists a non-zero $b\in \kk\, M_{<1}$ such that $\f(b)=ab$. Then 
\begin{enumerate}
\item\label{it:341} there also exists a non-zero $b'\in M_{<1}$ with $\f(b')=ab'$,
\item\label{it:342} $b \in \kk(z)$.
\end{enumerate}
\end{lemma}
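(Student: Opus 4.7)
The plan is to first extract from $b$ a genuine $M_{<1}$-solution using linear disjointness of $\kk$ and $M_{<1}$ over $\C$, and then to apply Lemma~\ref{lem:Nev} to pin this solution down inside $\C(z)$, after which $b \in \kk(z)$ will follow by taking a $\f$-constant ratio.

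The key preliminary observation is that $M_{<1}$ and $\kk$ are linearly disjoint over $\C$ inside $M$. I would deduce this by applying Lemma~\ref{lemma: linearly disjoint} with $K := M_{<1}$ (a $\f$-simple $\f$-ring, being a $\f$-field) and $R := \kk\,M_{<1} \subset M$, which is naturally a $K$-$\f$-algebra since $\kk$ is $\f$-fixed and $M_{<1}$ is $\f$-stable. One has $R^\f = \kk$ and $K^\f = M_{<1}^\f = \C$, the latter by Corollary~\ref{cor: constants of M1}, so $M_{<1}$ and $\kk$ are linearly disjoint over $\C$. Now for~(\ref{it:341}), write $b = \sum_{i=1}^n c_i g_i$ with $c_i \in \kk$, $g_i \in M_{<1}$, and $n$ minimal, so that $c_1,\ldots,c_n$ are $\C$-linearly independent. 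Since $\f(c_i) = c_i$ and $a \in \C(z) \subset M_{<1}$, expanding $\f(b) = ab$ yields
\[
\sum_{i=1}^n c_i\bigl(\f(g_i) - a g_i\bigr) = 0,
\]
with each $\f(g_i) - a g_i \in M_{<1}$. Linear disjointness then forces $\f(g_i) = a g_i$ for all $i$, and picking any $i$ with $g_i \ne 0$ (which must exist since $b \ne 0$) gives the desired $b' := g_i$.

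For~(\ref{it:342}), Lemma~\ref{lem:Nev} applied to $b'$ together with $a \in \C(z)$ yields $b' \in \C(z)$. Then
\[
\f(b/b') = \f(b)/\f(b') = ab/(ab') = b/b',
\]
so $b/b' \in M^\f = \kk$, and therefore $b = (b/b') \cdot b' \in \kk \cdot \C(z) \subset \kk(z)$. The only real subtlety is the linear-disjointness step, where one must check that $\kk M_{<1}$ really is a $\f$-subring of $M$ and that its $\f$-constants coincide with $\kk$; both are immediate from $\kk \subset M^\f$ and the identity $(\kk M_{<1})^\f \subset M^\f = \kk \subset (\kk M_{<1})^\f$. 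Everything else is a direct consequence of the earlier lemmas.
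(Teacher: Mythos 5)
Your overall strategy matches the paper's, but there is a genuine gap at the very first step of part~(\ref{it:341}): you write $b=\sum_{i=1}^n c_i g_i$ with $c_i\in\kk$ and $g_i\in M_{<1}$, i.e., you assume $b$ lies in the \emph{ring} generated by $\kk$ and $M_{<1}$. But $\kk\,M_{<1}$ is the field compositum, which (by the linear disjointness you correctly established) equals $\Q\big(M_{<1}\otimes_{\C}\kk\big)$; a priori $b$ is only a \emph{ratio} of such finite sums, and no such decomposition of $b$ itself is available. This is not a formality: the paper devotes the first third of its proof to exactly this point. It shows that the ``denominator ideal''
\[
\ida=\left\{f\in M_{<1}\otimes_{\C}\kk\;:\;f\cdot b\in M_{<1}\otimes_{\C}\kk\right\}
\]
is a non-zero $\f$-ideal --- the $\f$-stability uses precisely the hypothesis $\f(b)=ab$ together with $a$ being a unit of $M_{<1}$ --- and that $M_{<1}\otimes_{\C}\kk$ is $\f$-simple (Lemma~\ref{lemma: stays simple after constant extension}, since $M_{<1}^\f=\C$ by Corollary~\ref{cor: constants of M1}), whence $1\in\ida$ and $b\in M_{<1}\otimes_{\C}\kk$. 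Without this step your finite-sum representation, and hence your proof of~(\ref{it:341}), does not get off the ground.

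Once that step is supplied, the remainder of your argument is correct and essentially the paper's: expanding $\f(b)=ab$ along a $\C$-linearly independent family of elements of $\kk$ and invoking linear disjointness gives $\f(g_i)=ag_i$ for each $i$, Lemma~\ref{lem:Nev} puts each non-zero $g_i$ in $\C(z)$, and your constant-ratio trick $\f(b/b')=b/b'$ for part~(\ref{it:342}) is a clean (and valid) variant of the paper's direct conclusion $b=\sum_i b_i\otimes c_i\in\kk(z)$.
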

\begin{proof}
We know from Corollary \ref{cor: constants of M1} that $M_{<1}^\f=\C$, and it follows from Lemma \ref{lemma: linearly disjoint} that $M_{<1}$ is linearly disjoint from $\kk$ over $\C$. Hence, 
\begin{equation}\label{eq:quot}
\kk\, M_{<1}=\quot\big(M_{<1}\otimes_{\C}\kk\big).
\end{equation}
Moreover, $M_{<1}\otimes_{\C}\kk$ is $\f$-simple by Lemma \ref{lemma: stays simple after constant extension}.
We will first show that $b$ must lie in $M_{<1}\otimes_{\C}\kk$. Set
\[\ida=\left\{f\in M_{<1}\otimes_{\C}\kk\:|\:f\cdot b\in M_{<1}\otimes_{\C}\kk\right\}.\]
It follows from~\eqref{eq:quot} that $\ida$ is a non-zero ideal of $M_{<1}\otimes_{\C}\kk$. For all $f\in\ida$, we have $\f(fb)\in M_{<1}\otimes_{\C}\kk$ and, therefore,
\[\f(fb)=\f(f)\cdot ab\in M_{<1}\otimes_{\C}\kk.\]
Since $a\in\C(z)\subset M_{<1}$, this implies $$\f(f)\cdot b\in M_{<1}\otimes_{\C}\kk,$$ that is, $\f(f)\in\ida$. So, $\ida$ is a $\f$-ideal. Since $M_{<1}\otimes_{\C}\kk$ is $\f$-simple, we must have $1\in\ida$. So, $$b\in M_{<1}\otimes_{\C}\kk.$$
Choose a $\C$-basis $(c_i)$ of $\kk$ and write $b=\sum_i b_i\otimes c_i$ with $b_i\in M_{<1}$.
Then
\[\sum\nolimits_i\f(b_i)\otimes c_i=\f(b)=ab=\sum\nolimits_i ab_i\otimes c_i.\]
Hence, for all $i$, we have $\f(b_i)=ab_i$. By Lemma~\ref{lem:Nev}, we conclude that,
for all $i$, $b_i \in \C(z)$, which implies that $b \in \kk(z)$, showing~\eqref{it:342}. Moreover, since $b \ne 0$, there exists $i$ such that $b_i\ne 0$, showing~\eqref{it:341}.
\end{proof}

\begin{theo}\label{thm:Nev}
Let $f\in M$ and $0\ne a\in \C(z)$ be such
that $f$ is a non-zero solution of
\begin{equation}\label{eq:exdiffeq2}
\f(y) = ay.
\end{equation}
Then $f$ is $\s$-algebraically dependent over  $M_{<1}$ if
and only if
\begin{equation}\label{eq:aqbb2}
\varphi(a)=\f(b)/b
\end{equation}
for some $0\ne b \in \C(z)$ and
$1\ne\varphi(x)=x^{n_0}{\s(x)}^{n_1}\cdot\ldots\cdot
{\s^{t-1}(x)}^{n_{t-1}}$.
\end{theo}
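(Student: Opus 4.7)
The plan is to apply the $\s$-\PV\ theory developed earlier with base field $K:=M_{<1}$, and then to use Lemma~\ref{lem:Nev} to descend from $M_{<1}$-valued solutions to $\C(z)$-valued ones. First I verify the hypotheses: by Corollary~\ref{cor: constants of M1}, $K^\f = \C$, and $K^{\f^\infty} = \C$ because any non-constant periodic meromorphic function has Nevanlinna growth of order at least one and hence is not in $M_{<1}$. Since $\s$ acts trivially on $\C$ and $\C$ is algebraically closed, the $\s$-field $\C$ is $\s$-closed. Corollaries~\ref{cor:existence of PV extensions} and~\ref{cor: no new constants} then produce a $\s$-\PV\ extension $L\supset K$ for $\f(y)=ay$ with $L^\f=\C$; let $y\in L^\times$ be a fundamental solution and $G$ its $\s$-Galois group, a $\s$-algebraic subgroup of $\Gm$ over $\C$.

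The Galois-theoretic heart of the argument is a chain of equivalences analogous to the one in Theorem~\ref{thm:app}. By Lemma~\ref{lem:dimequal}, $y$ is $\s$-algebraically dependent over $K$ if and only if $G\subsetneq \Gm$. Proposition~\ref{prop:Gmphi} then yields a non-trivial multiplicative $\varphi(x) = x^{n_0}\s(x)^{n_1}\cdots\s^{t-1}(x)^{n_{t-1}}$ with $\varphi - 1$ in the defining ideal of $G$, which by the Galois correspondence (Theorem~\ref{thm:Galoiscorrespondence}) is equivalent to $\varphi(y)\in K = M_{<1}$. Writing $b:=\varphi(y)$, the identity
\[
\f(b) \;=\; \varphi(\f(y)) \;=\; \varphi(ay) \;=\; \varphi(a)\cdot b
\]
shows that $b\in M_{<1}$ is a nonzero solution of $\f(Y)=\varphi(a)Y$ with coefficient $\varphi(a)\in\C(z)$; Lemma~\ref{lem:Nev} then forces $b\in\C(z)$, giving the identity $\varphi(a)=\f(b)/b$ with $b\in\C(z)$. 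Conversely, given such an identity, $\varphi(y)/b$ is $\f$-invariant and therefore lies in $L^\f=\C$, so $\varphi(y)=Cb\in M_{<1}$ for some $C\in\C$; by Galois correspondence this puts $\varphi-1$ into the defining ideal of $G$, and hence $G\subsetneq\Gm$.

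The remaining step --- and I expect it to be the main obstacle --- is to transfer $\s$-algebraic dependence over $K$ between the abstract fundamental solution $y\in L$ (with $L^\f=\C$) and the given meromorphic $f\in M$ (with $M^\f=\kk\supsetneq\C$), since $L$ does not embed into $M$ as a $K$-$\fs$-algebra in the obvious way. The plan is to consider $L\otimes_\C\kk$, which is $\f$-simple by Lemma~\ref{lemma: stays simple after constant extension}, and to build a $K$-$\fs$-morphism into a localization of $M$ sending $y\otimes 1$ to $\alpha f$ for a suitable $\alpha\in\kk^\times$; by Remark~\ref{rem: fundamental solution matrix} any two fundamental solutions in a common $\fs$-overring differ by multiplication by a $\f$-constant, which produces the required $\alpha$. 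Lemma~\ref{lemma: correspondence for constants} and the $\f$-simplicity of the tensor product then guarantee that $\s$-polynomial relations for $y$ over $K$ correspond exactly to those for $f$ over $K$, and combining this transfer with the Galois-theoretic equivalences of the previous paragraph completes the proof.
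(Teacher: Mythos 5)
Your Galois-theoretic skeleton is sound, but the proof has a genuine gap at exactly the step you flag as the main obstacle, and the paper's proof is organized specifically so that this obstacle never arises. You take $K=M_{<1}$ as base field, produce an abstract $\s$-\PV\ extension $L$ with $L^\f=\C$, and then must transfer $\s$-algebraic (in)dependence over $M_{<1}$ between the abstract solution $y\in L$ and the given $f\in M$. Your transfer argument is circular: Remark~\ref{rem: fundamental solution matrix} produces the constant $\alpha$ only after $y$ and $f$ already live in a common $K$-$\fs$-overring, which is precisely what you need to construct. Building such an overring means finding a suitable $\fs$-ideal in (a localization of) $R\otimes_K K\{f,1/f\}_\s$ and controlling the new $\f$-constants that appear; these constants form, at best, a constrained $\s$-field extension of $\C$ (cf.\ Proposition~\ref{prop:new phi constants are sigma constrained}), and $(\C,\id)$ is \emph{not} $\s$-closed, contrary to your claim: the $\C$-$\s$-algebra $\C[x]$ with $\s(x)=x+1$ is a finitely $\s$-generated $\s$-domain admitting no $\C$-$\s$-point. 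So there is no reason for $\alpha$ to lie in $\kk$, let alone in $\C$, and the descent back to $M_{<1}$ does not follow. (The false $\s$-closedness claim is harmless for the existence step, since Corollary~\ref{cor:existence of PV extensions} only needs $\C$ algebraically closed and inversive, but it cannot be used to tame the constants in the comparison.)

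The paper avoids the comparison altogether by enlarging the base field to $\kk\,M_{<1}$: the smallest $\fs$-subfield $L\subset M$ containing $\kk$, $M_{<1}$ and $f$ satisfies $\kk\subset L^\f\subset M^\f=\kk$, so $L$ is itself a $\s$-\PV\ extension of $\kk\,M_{<1}$ with $f$ as its fundamental solution, and no abstract-versus-concrete transfer is needed. The price is that the Galois correspondence only gives $b:=\varphi(f)\in\kk\,M_{<1}$, and one then needs Lemma~\ref{lem:fromM1} --- which your proof never invokes --- to produce $b'\in M_{<1}$ with $\f(b')=\varphi(a)b'$, before Lemma~\ref{lem:Nev} forces $b'\in\C(z)$. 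To salvage your route you would essentially have to reprove the content of Lemma~\ref{lem:fromM1} inside the transfer step; as written, the argument is incomplete.
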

\begin{proof} The converse follows as in Theorem~\ref{thm:app}, noting \eqref{eq:CzK} and Corollary \ref{cor: constants of M1}. Let now $f$ be $\s$-algebraically dependent over $M_{<1}$. As in the proof of Theorem~\ref{thm:app}, we will show that there exists $b \in M_{<1}$ and multiplicative $\varphi$ such that 
$$
\varphi(a) = \f(b)/b.
$$
Lemma~\ref{lem:Nev} implies that $b \in \C(z)$. To do the above, let $L$ be the smallest
$\fs$-subfield in $M$ containing $\kk$, $M_{<1}$, and $f$. 
Since $\kk \subset L^\f \subset M^\f = \kk$,   
the $\fs$-field $L$ is a $\s$-\PV\ extension over
$\kk\,M_{<1}$ for equation~\eqref{eq:exdiffeq}. 
It follows from Lemma~\ref{lem:dimequal} and Proposition~\ref{prop:Gmphi} that $f$ is
$\s$-algebraically dependent over $\kk\, M_{<1}$ if and only the
$\s$-Galois group $G$ of equation~\eqref{eq:exdiffeq} is a
proper $\s$-algebraic subgroup of $\Gm$. Then, by
Proposition~\ref{prop:Gmphi}, there exists a multiplicative
$
\varphi\in \kk\{x,1/x\}_{\s}
$
 such that the ideal of $G$ contains the equation
$
\varphi(x) = 1$. Therefore, for every $\kk$-$\s$-algebra  $B$ and $g\in G(B)$, we have
$$
g(\varphi(f))= \varphi(g(f))=\varphi(c_g\cdot
f) = \varphi(c_g)\cdot \varphi(f)=1\cdot \varphi(
f)= \varphi(f).
$$
Hence, by Theorem~\ref{thm:Galoiscorrespondence}, we have
$
b:= \varphi(f) \in \kk\, M_{<1}$. Since $f\ne 0$ 
and $\varphi$ is
multiplicative, $\varphi(f)\ne 0$.  
Therefore,
\begin{align*}
\varphi(a) = \varphi(\f(f)/f)=
\f(\varphi(f))/\varphi(f)= \f(b)/b.
\end{align*}
By Lemma~\ref{lem:fromM1}, there exists $b' \in M_{<1}$ such that $\varphi(a) = \phi(b')/b'$, which finishes the proof.
\end{proof}

\subsection{How to use the above results in practice}
Let $a\in \C(z)^\times$ and $w_0,\:z_0\in \C^\times$ and $\f$ and $\s$ act on $\C(z)$ as follows:
$$
\f(f)(z)=f(z+w_0)\quad\text{and}\quad \s(f)(z)=f(z+z_0),\quad f \in \C(z).
$$
Then, for some $N \ge 0$, $a$ can
be represented as follows
$$
a = \lambda\cdot
\prod_{k=0}^{t-1}\prod_{d=-N-1}^{N}\prod_{i=1}^{R}\left( z - k\cdot z_0-d\cdot w_0- r_i \right)^{s_{k,d,i} },
$$
where $\lambda, r_i\in \mathbb C$ and the $r_i$'s are distinct in
$\mathbb C\big/w_0\cdot{\mathbb Z}+z_0\cdot\mathbb Z$.  For all $i$ and $k$, $1\le i \le R$, $0\le k\le t-1$, let
\begin{equation}\label{eq:defaik}
a_{i,k} = \sum_{d=-N-1}^{N} s_{k,d,i}.
\end{equation}
The following result combined with Theorems~\ref{thm:app} and~\ref{thm:Nev} provides  a complete characterization of all equations~\eqref{eq:exdiffeq} whose solutions are $\s$-algebraically independent.

\begin{theo}\label{thm:6}
Let $a\in \C(z)$
be as above and $z_0/w_0 \notin \mathbb Q$. Then
\begin{enumerate}
\item If  $\lambda$ is a root of unity, then
there exist  $b\in \C(z)$ and a
multiplicative function 
$$
\varphi(x)=x^{n_0}\cdot \left(\s
(x)\right)^{n_1}\cdot\ldots\cdot{\left(\s^A(x)\right)}^{n_A}\neq 1
$$
such that $\varphi(a)=\f(b)/b$ if and only if, for all $i$, $1\le i \le R$, $$a_{i,0}=\ldots=a_{i,t-1}=0.$$
\item If  $\lambda$ is not a root of unity, then
there exist  $b\in \mathbb C(z)$ and a
multiplicative function
$$
\varphi(x)=x^{n_0}\cdot \left(\s(x)\right)^{n_1}\cdot\ldots\cdot {\left(\s^A (x)\right)}^{n_A}\neq 1
$$
such that $\varphi(a)=\f(b)/b$ if and only if, for all $i$, $1\le i \le R$, $$a_{i,0}=\ldots=a_{i,t-1}=0\quad\text{and}\quad t\ge 2.$$\end{enumerate}
\end{theo}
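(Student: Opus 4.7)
The plan is to recast $\varphi(a) = \f(b)/b$ with $b \in \C(z)$ as two numerical conditions on $\varphi(a)$: (I) the leading coefficient equals $1$, and (II) the divisor of $\varphi(a)$ sums to zero on every $w_0$-orbit in $\C$. Indeed, any nonzero $b = \mu\prod_\beta (z-\beta)^{m_\beta} \in \C(z)$ gives $\f(b)/b = \prod_\beta(z-\beta+w_0)^{m_\beta}(z-\beta)^{-m_\beta}$, which has leading coefficient $1$ and whose divisor telescopes on each $w_0$-orbit; conversely, (I) and (II) suffice to reconstruct a suitable $b$ orbit by orbit via a standard telescoping argument.

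With this reduction in hand, the next step is to compute both invariants for $\varphi(a) = \prod_j \s^j(a)^{n_j}$. Because $\s$ fixes $\lambda \in \C$, the leading coefficient of $\varphi(a)$ is $\lambda^{n_0+\cdots+n_A}$. Since $\s^j(a)$ carries multiplicity $s_{k,d,i}$ at $(k-j)z_0 + dw_0 + r_i$, and since $z_0/w_0 \notin \QQ$ together with distinctness of the $r_i$ modulo $w_0\ZZ + z_0\ZZ$ implies that the $w_0$-orbits $O_{i,k'} := r_i + k'z_0 + w_0\ZZ$ (for $1 \le i \le R$, $k' \in \ZZ$) are pairwise distinct and exhaust the support, the sum of valuations of $\varphi(a)$ along $O_{i,k'}$ equals $\sum_j n_j a_{i,k'+j}$, where $a_{i,k} := 0$ for $k \notin \{0,\ldots,t-1\}$.

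For necessity, suppose $\varphi \ne 1$ and $b \in \C(z)$ satisfy $\varphi(a) = \f(b)/b$. Condition (II) on all orbits then reads $\sum_j n_j a_{i,k'+j} = 0$ for every $i$ and every $k' \in \ZZ$. Introducing $P(X) := \sum_j n_j X^j$ and $A_i(X) := \sum_{k=0}^{t-1} a_{i,k} X^k$ in $\C[X,X^{-1}]$, this is equivalent to $P(X^{-1}) A_i(X) = 0$. Since $\varphi \ne 1$ gives $P \ne 0$ and $\C[X,X^{-1}]$ is a domain, $A_i \equiv 0$ for all $i$, i.e., $a_{i,0} = \cdots = a_{i,t-1} = 0$. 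The leading coefficient condition $\lambda^{\sum n_j} = 1$ then gives, in case (i), that $\mathrm{ord}(\lambda)$ divides $\sum_j n_j$, and in case (ii), that $\sum_j n_j = 0$; combined with $\varphi \ne 1$, the latter forces at least two nonzero $n_j$ of opposite signs, and this translates, after normalizing the representation of $a$ so that the $\s$-shifts of $\varphi$ are indexed by distinct $k$-slots, into the condition $t \ge 2$.

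For sufficiency, assume all $a_{i,k} = 0$. Then each partial factor $g_k := \prod_{d,i}(z - kz_0 - dw_0 - r_i)^{s_{k,d,i}}$ has leading coefficient $1$ and telescoping divisor, so $g_k = \f(b_k)/b_k$ for some $b_k \in \C(z)$. Setting $B := \prod_k b_k$ gives $a = \lambda\,\f(B)/B$, whence $\varphi(a) = \lambda^{\sum n_j}\,\f(\tilde B)/\tilde B$ with $\tilde B := \prod_j \s^j(B)^{n_j}$. In case (i), choose $\varphi(x) = x^q$ with $q := \mathrm{ord}(\lambda) \ge 1$: then $\lambda^q = 1$ and $\varphi \ne 1$. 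In case (ii) with $t \ge 2$, choose $\varphi(x) = x \cdot \s(x)^{-1}$: then $\sum n_j = 0$ and $\varphi \ne 1$. In both cases $\varphi(a) = \f(\tilde B)/\tilde B$, as required. The principal obstacle is the case-(ii) bookkeeping, i.e., pinning down the precise interplay between the structure of $\varphi$ and the chosen representation of $a$ that yields the condition $t \ge 2$, rather than merely $A \ge 1$.
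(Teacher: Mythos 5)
Your overall strategy is the same as the paper's: both proofs reduce $\varphi(a)=\f(b)/b$ to the telescoping conditions ``leading coefficient $1$'' plus ``the exponent sum of $\varphi(a)$ vanishes on every $w_0$-orbit,'' compute those orbit sums as $\sum_j n_j a_{i,k'+j}$, and then analyze the resulting linear system in the $n_j$'s together with $\lambda^{\sum n_j}=1$. Your one genuine improvement is the necessity step: encoding the orbit conditions as $P(X^{-1})A_i(X)=0$ in $\C[X,X^{-1}]$ and invoking that this ring is a domain is cleaner and more transparent than the paper's rank computation for the banded matrix in its system \eqref{eq:firstcase}, and it works uniformly in the length of $\varphi$.

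The gap you flag at the end is, however, a real one, and it cannot be closed by ``bookkeeping'' alone: if $\varphi$ is allowed to have arbitrary length $A+1$, the necessity of $t\ge 2$ in case (ii) is simply false. Take $w_0=1$, $z_0\notin\QQ$, and $a=2(z-1)/z$, so that $\lambda=2$, $t=1$, $R=1$, $r_1=0$, and $a_{1,0}=1-1=0$. Then $\varphi(x)=x\cdot\s(x)^{-1}$ and $b=(z+z_0-1)/(z-1)$ satisfy $\varphi(a)=\f(b)/b$ (both sides equal $(z-1)(z+z_0)/\bigl(z(z+z_0-1)\bigr)$), even though $t=1$. Your own sufficiency construction $\varphi=x\cdot\s(x)^{-1}$, $b=\tilde B$ also goes through for $t=1$, which is another sign that no amount of normalization of the representation of $a$ will produce the obstruction. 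What actually makes the statement true is an a priori restriction on the length of $\varphi$: the paper asserts in its proof that $\varphi(a)=\f(b)/b$ ``gives $A=B=t-1$,'' i.e., it only considers $\varphi$ of length at most $t$, matching the form $\varphi(x)=x^{n_0}\s(x)^{n_1}\cdots\s^{t-1}(x)^{n_{t-1}}$ appearing in Theorems~\ref{thm:app} and~\ref{thm:Nev}. Under that restriction the $t=1$ subcase of (ii) is immediate: $\varphi=x^{n_0}$, so $\lambda^{n_0}=1$ with $\lambda$ not a root of unity forces $n_0=0$, i.e., $\varphi=1$. So to complete your argument you must either import this length restriction explicitly (and then your case-(ii) necessity is a one-line consequence of $\sum n_j=0$ together with $A\le t-1$ and the domain argument), or accept that without it the condition $t\ge2$ has to be dropped from the equivalence. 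The rest of your proof --- the reduction to (I) and (II), the distinctness of the orbits $O_{i,k'}$ from $z_0/w_0\notin\QQ$, and both sufficiency constructions --- is correct.
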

\begin{proof}
We will write $\varphi$ and $b$ with undetermined coefficients and
exponents. Suppose that
$$
b=\mu\cdot 
\prod_{k=-B}^{B}\prod_{d=-N}^{N}\prod_{i=1}^{R}\left( z - k\cdot z_0 - 
d\cdot w_0- r_i \right)^{l_{k, d, i}}
\quad
\text{and}\quad
\varphi(x)=x^{n_0}\cdot \left(\s(x)\right)^{n_1}\cdot\ldots\cdot
{\left(\s^{A}(x)\right)}^{n_{A}}
$$
are such that  $\varphi(a)=\f(b)/b$ and $A,B \ge 0$. Let us
calculate the right and left-hand sides of this equality. We
see that
$$
\f(b)=\mu\cdot 
\prod_{k=-B}^{B}\prod_{d=-N}^{N}\prod_{i=1}^{R}\left( z - k\cdot z_0 - 
(d-1)\cdot w_0- r_i \right)^{l_{k, d, i}}.
$$
Hence,
$$
\begin{aligned}
&\frac{\f(b)}{b}= 
\prod_{k=-B}^{B} \prod_{d=-N-1}^{N-1}\prod_{i=1}^{R} \left(  z - k\cdot z_0 - 
d\cdot w_0- r_i  \right)^{l_{k, d+1, i}}\cdot
\prod_{k=-B}^{B} \prod_{d=-N}^{N}\prod_{i=1}^{R} \left(  z - k\cdot z_0 - 
d\cdot w_0- r_i  \right)^{-l_{k, d, i}}=\\
&=
\prod_{k=-B}^{B}\prod_{i=1}^{R} \Bigl[\left(  z - k\cdot z_0 + 
(N+1)\cdot w_0- r_i \right)^{l_{k, -N, i}}\prod_{d=-N}^{N-1} \left( z - k\cdot z_0 - 
d\cdot w_0- r_i \right)^{l_{k,
d+1, i}-l_{k, d, i}}
\left(  z - k\cdot z_0 - 
N\cdot w_0- r_i
\right)^{-l_{k, N, i}}\Bigr].
\end{aligned}
$$
Now, we calculate the left-hand side. We see that, for all $r \ge 0$, 
$$
\begin{aligned}
\s^r(a)^{n_r} &= \lambda^{n_r}\cdot
\prod_{k=0}^{t-1}\prod_{d=-N-1}^{N}\prod_{i=1}^{R}\left(  z - (k-r)\cdot z_0 - 
d\cdot w_0- r_i \right)^{n_rs_{k, d, i} }=\lambda^{n_r}\cdot
\prod_{k=-r}^{t-1-r}\prod_{d=-N-1}^{N}\prod_{i=1}^{R}\left(  z - k\cdot z_0 - 
d\cdot w_0- r_i \right)^{n_r s_{r+k, d, i}}.
\end{aligned}
$$
Hence,
$$
\begin{aligned}
\varphi(a)&=\lambda^{\sum_{r=0}^{A} n_r}\cdot
\prod_{k=-A}^{t-1}\prod_{d=-N-1}^{N}\prod_{i=1}^{R}\left(  z - k\cdot z_0 - 
d\cdot w_0- r_i\right)^{\sum_{0\le r\le A\atop{0\le r+k\le t-1}}n_r s_{r+k , d, i}}.
\end{aligned}
$$
Now, the equation $\varphi(a)=\f(b)/b$ gives $A=B = t-1$ and the following
system of linear equations
$$
\left\{
\begin{aligned}
&\left\{
\begin{aligned}
\sum\nolimits_{0\le r,\: r+k\le t-1} s_{r+k, -N-1, i}\cdot n_r &= l_{k, -N, i}\\
\sum\nolimits_{0\le r,\: r+k\le t-1} s_{r+k, d, i}\cdot n_r &= l_{k, d+1, i } - l_{k, d, i},\quad \quad -N\leqslant d\leqslant N-1,\quad 1 \le i \le R,\quad 1-t\le k\le t-1 \\
\sum\nolimits_{0\le r,\: r+k\le t-1} s_{r+k, N, i} \cdot n_r &= -l_{k, N, i}\\
\end{aligned}\right.\\
&\lambda^{\sum_{r=0}^{t-1}n_r} = 1
\end{aligned}\right.
$$
The first subsystem, for all $i$ and $k$, $1\le i \le R$, can be rewritten as follows:
$$
\left\{
\begin{aligned}
\begin{pmatrix}
s_{0, -N-1, i}\\
s_{0, -N, i}\\
\vdots\\
s_{0, N, i}\\
\end{pmatrix}
\begin{pmatrix}
n_{t-1}
\end{pmatrix}
&=
\begin{pmatrix}
l_{k, -N, i}\\
l_{k, -N +1, i} - l_{k, -N, i}\\
\vdots\\
-l_{k, N, i}\\
\end{pmatrix},\quad k=1-t.\\
\begin{pmatrix}
s_{0, -N-1, i}&s_{1, -N-1, i}\\
s_{0, -N, i}&s_{1, -N, i}\\
\vdots&\vdots\\
s_{0, N, i}&s_{1, N, i}\\
\end{pmatrix}
\begin{pmatrix}
n_{t-2}\\
n_{t-1}
\end{pmatrix}
&=
\begin{pmatrix}
l_{k, -N, i}\\
l_{k, -N +1, i} - l_{k, -N, i}\\
\vdots\\
-l_{k, N, i}\\
\end{pmatrix},\quad k=2-t.\\
&\vdots\\
\begin{pmatrix}
s_{0, -N-1, i}&s_{1, -N-1, i}&\ldots&s_{t-1, -N-1, i}\\
s_{0, -N, i}&s_{1, -N, i}&\ldots&s_{t-1, -N, i}\\
\vdots&\vdots&\ddots&\vdots\\
s_{0, N, i}&s_{1, N, i}&\ldots&s_{t-1, N, i}\\
\end{pmatrix}
\begin{pmatrix}
n_0\\
n_1\\
\vdots\\
n_{t-1}
\end{pmatrix}
&=
\begin{pmatrix}
l_{k, -N, i}\\
l_{k, -N +1, i} - l_{k, -N, i}\\
\vdots\\
-l_{k, N, i}\\
\end{pmatrix},\quad k=0.\\
&\vdots\\
\begin{pmatrix}
s_{t-1, -N-1, i}\\
s_{t-1, -N, i}\\
\vdots\\
s_{t-1, N, i}\\
\end{pmatrix}
\begin{pmatrix}
n_0
\end{pmatrix}
&=
\begin{pmatrix}
l_{k, -N, i}\\
l_{k, -N +1, i} - l_{k, -N, i}\\
\vdots\\
-l_{k, N, i}\\
\end{pmatrix},\quad k=t-1.
\end{aligned}\right.
$$
Each subsystem has a solution in $l_{k, d, i}$ if and only if the sum
of all equations is zero. Thus, we can replace this system with the
following:
$$
\left\{
\begin{aligned}
n_{t-1}\cdot\sum_{d=-N-1}^{N} s_{0,d,i}&=0\\
n_{t-2}\cdot \sum_{d=-N-1}^{N} s_{0,d,i}+n_{t-1}\cdot \sum_{d=-N-1}^{N}s_{1,d,i}&=0\\
&\vdots\\
n_0\cdot \sum_{d=-N-1}^{N} s_{0,d,i}+n_1\cdot \sum_{d=-N-1}^{N}s_{1,d,i}+\ldots+n_{t-1}\cdot\sum_{d=-N-1}^{N}s_{t-1, d, i}&=0\\
&\vdots\\
n_0\cdot\sum_{d=-N-1}^{N} s_{t-1,d,i}&=0
\end{aligned}
\right.
$$
Using~\eqref{eq:defaik}, we obtain the following
system:
\begin{equation}\label{eq:firstcase}
\begin{pmatrix}
0&0&\ldots&0&a_{i,0}\\
0&0&\ldots&a_{i,0}&a_{i,1}\\
\vdots&\vdots&\vdots&\vdots&\vdots\\
a_{i,0}&a_{i,1}&\ldots&a_{i,t-2}&a_{i,t-1}\\
\vdots&\vdots&\vdots&\vdots&\vdots\\
a_{i,t-2}&a_{i,t-1}&\ldots&0&0\\
a_{i,t-1}&0&\ldots&0&0\\
\end{pmatrix}
\begin{pmatrix}
n_0\\
n_1\\
\vdots\\
n_{t-1}\\
\end{pmatrix}
=\begin{pmatrix}
0\\\vdots\\ 0
\end{pmatrix}.
\end{equation}
Thus, for some integers $\gamma_{k,d,i,j}$, we have:
$$
\left\{
\begin{aligned}
&\begin{pmatrix}
0&0&\ldots&0&a_{i,0}\\
0&0&\ldots&a_{i,0}&a_{i,1}\\
\vdots&\vdots&\vdots&\vdots&\vdots\\
a_{i,0}&a_{i,1}&\ldots&a_{i,t-2}&a_{i,t-1}\\
\vdots&\vdots&\vdots&\vdots&\vdots\\
a_{i,t-2}&a_{i,t-1}&\ldots&0&0\\
a_{i,t-1}&0&\ldots&0&0\\
\end{pmatrix}
\begin{pmatrix}
n_0\\
n_1\\
\vdots\\
n_{t-1}\\
\end{pmatrix}
=\begin{pmatrix}
0\\\vdots\\ 0
\end{pmatrix}\\
&\lambda^{\sum_{r=0}^{t-1}n_r} = 1\\
&l_{k, d, i} = \sum_{r=0}^{t-1} \gamma_{k, d, i, r}\cdot n_r
\end{aligned}\right.
$$

Consider the {\bf first case}:  $\lambda$ is a root of unity. Then, for some $u \in \mathbb Z\setminus \{0\}$, we have
$\lambda^u=1$. In this situation, if $n_r, l_{k,d,i}$ is a
solution of all equations except for the second one, then
$$
u\cdot n_r,\ \ u\cdot l_{k,d,i}
$$
is a solution of the whole system.
Therefore, in this case, the existence of $\varphi$ and $b$ is
equivalent  to~\eqref{eq:firstcase}  
having a nontrivial common solution.

Consider the {\bf second case}:  $\lambda$ is not a root of unity. Then the second equation
gives
$\sum_{r=0}^{t-1} n_r=0$. Thus, in this case, we need to show the existence
of a nontrivial solution of the system
\begin{equation}\label{eq:secondcase}
\begin{aligned}
&\begin{pmatrix}
0&0&\ldots&0&a_{i,0}\\
0&0&\ldots&a_{i,0}&a_{i,1}\\
\vdots&\vdots&\vdots&\vdots&\vdots\\
a_{i,0}&a_{i,1}&\ldots&a_{i,t-2}&a_{i,t-1}\\
\vdots&\vdots&\vdots&\vdots&\vdots\\
a_{i,t-2}&a_{i,t-1}&\ldots&0&0\\
a_{i,t-1}&0&\ldots&0&0\\
1&1&\ldots&1&1
\end{pmatrix}
\begin{pmatrix}
n_0\\
n_1\\
\vdots\\
n_{t-1}\\
\end{pmatrix}
=\begin{pmatrix}
0\\\vdots\\ 0
\end{pmatrix}.
\end{aligned}
\end{equation}
Since all the coefficients in~\eqref{eq:firstcase} and~\eqref{eq:secondcase}
are integers,  there is a nontrivial solution with integral
coefficients if and only if there is a nontrivial solution with
complex coefficients. 

In the {\bf first case}, the rank is less than $t$ if and only if
$$
a_{i,0}=\ldots=a_{i,t-1}=0.
$$
In the {\bf second case}, the rank is less than $t$ if and only if
$$
a_{i,0}=\ldots=a_{i,t-1}=0\quad \text{and}\quad t \ge 2.\let\qedsymbol\openbox\qedhere
$$
\end{proof}
\subsection{Examples}\label{sec:finalexamples}
We will now illustrate Theorems~\ref{thm:app},~\ref{thm:Nev}, and~\ref{thm:6}.
\begin{ex}\label{ex:Gmmaf} The gamma function $\Gamma$ satisfying
$$
\Gamma(z+1)=z\cdot\Gamma(z)
$$
does not satisfy any polynomial difference equation over  $M_{<1}$ (see~\eqref{eq:Nevlimit}) for any shift by $z_0\notin \mathbb Q$ as, in the notation of Theorem~\ref{thm:6}, $N=0$, $t=1$, $R =1$, and
$1=s_{0,0,1} = a_{1,0}\ne 0$. A differential algebraic independence statement over $M_{<1}$ for $\Gamma$ was shown in \cite{BankKaufman} using analytic techniques. Also, \cite[Thm.~1]{ChiangFeng} gives difference algebraic independence of the Riemann zeta function $\zeta$ over $M_{<1}$. Note the following relation between $\zeta$ and $\Gamma$:
$$
\zeta(1-s) = 2^{1-s}\cdot\pi^{-s}\cdot\cos(\pi\cdot s/2)\cdot \Gamma(s)\cdot\zeta(s).
$$
\end{ex}

\begin{ex}\label{ex:power}
For $f\in K := \C(z,\alpha)$, let
$$
\f(f)(z,\alpha) = f(z,\alpha+1)\quad\text{and}\quad\s(f)(z,\alpha)=f(z+z_0,\alpha).
$$
Let $F$ be a $\fs$-field over $K$ that contains a non-zero solution of
$$
\f(y)=z\cdot y,
$$
which we denote by $z^\alpha$. Let $\varphi$ be as in the statement of Theorem~\ref{thm:app}. If $z^\alpha$ were $\s$-algebraic dependent over $\C(z,\alpha)$, then, by the proof of Theorem~\ref{thm:app}, there would exist $0 \ne b \in F^\f$ (note that $\C(z)\subset F^\f$ in our case) such that
$$
1  =  \f(b)/b=\varphi(z).$$
Since $\s$ is a shift, $\varphi = 1$, which is a contradition. This proves the difference algebraic independence of $z^\alpha$ over $\C(z,\alpha)$ with respect to shifts of $z$ (see \cite{FGS2005,BGKL2008} for a related statement, in which $\alpha$ takes values in $\QQ$).
\end{ex}
 
\begin{ex}\label{ex:differential}
Let $K$ be a field. Consider $\seq_K$ as a $\s$-ring with $\s$ acting as the shift. Let $L$ be a $\s$-subfield of $\seq_K$. Consider $\seq_L $as a $\fs$-ring with $\f$ acting as the shift and $\s$ acting coordinatewise. Let F be a $\fs$-subfield of $\seq_L$ and $\{S(m,\alpha)\}\in F$ satisfy a first-order $\f$-difference equation
$$S(m,\alpha+1)=f(m,\alpha)\cdot S(m,\alpha),\quad \{f(m,\alpha)\}\in M,$$
where $M$ is a $\fs$-subfield of F, which contains $L$. Then it follows from the proof of Theorem~\ref{thm:app}, \cite[Lem.~A.40]{DiVizioHardouinWibmer:DifferenceGaloisofDifferential}, and \cite[Prop.~1.1]{EisenbudSturmfels} that, if $\{S(m,\alpha)\}$ satisfies a linear
$\s$-difference equation, then there exists $\{b(m,\alpha)\} \in M$ and $n\Ge 1$ such that,
for all $m$ and $\alpha$,
\begin{equation}\label{eq:Smnalpha}
S(m+n,\alpha)=b(m,\alpha)\cdot S(m,\alpha).
\end{equation}
In particular, we can take $M$ to be the image of $L(z)$ in $\seq_L$, as in \S\ref{subsec: special existence}. Let
$$
y(x,\alpha) := \sum\nolimits_{m\Ge 0} S(m,\alpha)\cdot x^m
$$
By~\cite[Thm.~1.5]{Stanley1980}, the function $y(x,\alpha)$ satisfies a linear differential equation in $x$ if and only if $S(m,\alpha)$ satisfies a homogeneous linear difference equation in $m$  (see also \cite[App.~B.4]{FSbook} and the reference given there).
Suppose it is known that $S(m,\alpha)$ satisfies a first-order homogeneous linear difference equation with respect to $\alpha$, and one wants to know whether $y(x,\alpha)$ satisfies a linear differential equation in $x$. The above method helps find difference equations in $m$ if they are hard to find otherwise, as such equations are all of the form~\eqref{eq:Smnalpha}. Just to illustrate the process (but not the difficulty), consider the Bessel functions of the first kind, which are given by
$$
J_\alpha(x) = \sum\nolimits_{m\Ge0} \frac{(-1)^m}{m!\cdot\Gamma(m+\alpha+1)}(x/2)^{2m+\alpha},
$$
where $\alpha$ is an integer. It is a solution of the following differential equation:
\begin{equation}\label{eq:Bessel}
x^2y''+xy'+{\left(x^2-\alpha^2\right)}\cdot y = 0,
\end{equation}
where $'$ stands for $\frac{\mathrm{d}}{\mathrm{d}x}$. Let 
$$
S(m,\alpha) = \frac{(-1)^m}{m!\cdot \Gamma(m+\alpha+1)}\quad\text{and}\quad I_\alpha(x) = \sum\nolimits_{m\Ge0}S(m,\alpha)x^m.
$$
Then
$
J_\alpha(x) = (x/2)^\alpha\cdot I_\alpha{\left(x^2/4\right)}$.
We have:
$$
(m+\alpha+1)\cdot S(m,\alpha+1)=S(m,\alpha).
$$
Moreover, we have:
$$
(m+1)(m+\alpha+1)\cdot S(m+1,\alpha)+S(m,\alpha)=0.
$$
Therefore, by a calculation using the {\tt Gfun} package in {\sc Maple} \cite{SalvyGfun}, $I_\alpha(x)$ satisfies the second-order  linear differential equation  
\begin{equation}\label{eq:Ia}
xy''+y'+y=0
\end{equation}
(see also the proof of~\cite[Thm.~1.5]{Stanley1980}). One now obtains~\eqref{eq:Bessel} by substituting the expression of $I_\alpha$ in terms of $J_\alpha$ into~\eqref{eq:Ia}. 
\end{ex}
\bibliographystyle{spmpsci}
\bibliography{bibdata.bib}

\begin{thebibliography}{10}
\providecommand{\url}[1]{{#1}}
\providecommand{\urlprefix}{URL }
\expandafter\ifx\csname urlstyle\endcsname\relax
  \providecommand{\doi}[1]{DOI~\discretionary{}{}{}#1}\else
  \providecommand{\doi}{DOI~\discretionary{}{}{}\begingroup
  \urlstyle{rm}\Url}\fi

\bibitem{Amano}
Amano, K.: Liouville extensions of {A}rtinian simple module algebras.
\newblock Communications in Algebra \textbf{34}(5), 1811--1823 (2006).
\newblock \urlprefix\url{http://dx.doi.org/10.1080/00927870500542796}

\bibitem{AmanoMasuoka:artiniansimple}
Amano, K., Masuoka, A.: Picard--{V}essiot extensions of {A}rtinian simple
  module algebras.
\newblock Journal of Algebra \textbf{285}(2), 743--767 (2005).
\newblock \urlprefix\url{http://dx.doi.org/10.1016/j.jalgebra.2004.12.006}

\bibitem{AmanoMasuokaTakeuchi:HopfPVtheory}
Amano, K., Masuoka, A., Takeuchi, M.: Hopf algebraic approach to
  {P}icard--{V}essiot theory.
\newblock In: Handbook of algebra, vol.~6, pp. 127--171.
  Elsevier/North-Holland, Amsterdam (2009).
\newblock \urlprefix\url{http://dx.doi.org/10.1016/S1570-7954(08)00204-0}

\bibitem{Andre}
Andr\'e, Y.: Diff\'erentielles non commutatives et th\'eorie de {G}alois
  diff\'erentielle ou aux diff\'erences.
\newblock Annales Scientifiques de l'\'Ecole Normale Sup\'erieure. Quatri\`eme
  S\'erie \textbf{34}(5), 685--739 (2001)

\bibitem{OvchinnikovTrushinetal:GaloisTheory_of_difference_equations_with_peri%
odic_parameters}
Antieau, B., Ovchinnikov, A., Trushin, D.: Galois theory of difference
  equations with periodic parameters (2012).
\newblock \urlprefix\url{http://arxiv.org/abs/1009.1159}

\bibitem{BankKaufman}
Bank, S., Kaufman, R.: An extension of {H}{\"o}lder's theorem concerning the
  gamma function.
\newblock Funkcialaj Ekvacioj. Serio Internacia \textbf{19}(1), 53--63 (1976).
\newblock
  \urlprefix\url{http://fe.math.kobe-u.ac.jp/FE/FE_pdf_with_bookmark/FE11-20-e%
n_KML/fe19-053-063/fe19-053-063.pdf}

\bibitem{BGKL2008}
Bell, J., Gerhold, S., Klazar, M., Luca, F.: Non-holonomicity of sequences
  defined via elementary functions.
\newblock Annals of Combinatorics \textbf{12}(1), 1--16 (2008).
\newblock \urlprefix\url{http://dx.doi.org/10.1007/s00026-008-0333-6}

\bibitem{Bourbaki:commutativealgebra}
Bourbaki, N.: Elements of mathematics. {C}ommutative algebra.
\newblock Hermann, Paris (1972).
\newblock Translated from the French

\bibitem{PhyllisMichael}
Cassidy, P., Singer, M.: Galois theory of parametrized differential equations
  and linear differential algebraic group.
\newblock IRMA Lectures in Mathematics and Theoretical Physics \textbf{9},
  113--157 (2007).
\newblock \urlprefix\url{http://dx.doi.org/10.4171/020-1/7}

\bibitem{ChatzidakisHardouinSinger:OntheDefinitionsOfDifferenceGaloisgroups}
Chatzidakis, Z., Hardouin, C., Singer, M.F.: On the definitions of difference
  {G}alois groups.
\newblock In: Model theory with applications to algebra and analysis. {V}ol. 1,
  \emph{London Math. Soc. Lecture Note Ser.}, vol. 349, pp. 73--109. Cambridge
  Univ. Press, Cambridge (2008).
\newblock \urlprefix\url{http://dx.doi.org/10.1017/CBO9780511735226.006}

\bibitem{Hrushovskietal:ModelTheoryofDifferencefields}
Chatzidakis, Z., Hrushovski, E.: Model theory of difference fields.
\newblock Transactions of the American Mathematical Society \textbf{351}(8),
  2997--3071 (1999).
\newblock \urlprefix\url{http://dx.doi.org/10.1090/S0002-9947-99-02498-8}

\bibitem{Hrushovskietal:ModelTheoryofDifferenceFieldsIIPeriodicIdelas}
Chatzidakis, Z., Hrushovski, E., Peterzil, Y.: Model theory of difference
  fields. {II}. {P}eriodic ideals and the trichotomy in all characteristics.
\newblock Proceedings of the London Mathematical Society. Third Series
  \textbf{85}(2), 257--311 (2002).
\newblock \urlprefix\url{http://dx.doi.org/10.1112/S0024611502013576}

\bibitem{ChiangFeng}
Chiang, Y.M., Feng, S.J.: Difference independence of the {R}iemann zeta
  function.
\newblock Acta Arithmetica \textbf{125}(4), 317--329 (2006).
\newblock \urlprefix\url{http://dx.doi.org/10.4064/aa125-4-2}

\bibitem{SalvyJSC}
Chyzak, F., Salvy, B.: Non-commmutative elimination in {O}re algebras proves
  multivariate identities.
\newblock Journal of Symbolic Computation \textbf{26}(2), 187--227 (1998).
\newblock \urlprefix\url{http://dx.doi.org/10.1006/jsco.1998.0207}

\bibitem{Cohn:difference}
Cohn, R.M.: Difference algebra.
\newblock Interscience Publishers John Wiley \& Sons, New York-London-Sydeny
  (1965)

\bibitem{LuciaSMF}
Di~Vizio, L.: Approche galoisienne de la transcendance diff\'erentielle (2012).
\newblock
  \urlprefix\url{http://divizio.perso.math.cnrs.fr/PREPRINTS/16-JourneeAnnuell%
eSMF/difftransc.pdf}.
\newblock Journ\'ee annuelle 2012 de la SMF

\bibitem{HDV}
Di~Vizio, L., Hardouin, C.: Courbures, groupes de {G}alois g\'en\'eriques et
  {D}-groupo{\"{i}}des de {G}alois d'un syst\`eme aux {D}-diff\'erences.
\newblock C. R. Math. Acad. Sci. Paris \textbf{348}(17--18), 951--954 (2010).
\newblock \urlprefix\url{http://dx.doi.org/10.1016/j.crma.2010.08.001}

\bibitem{CharlotteLucia11}
Di~Vizio, L., Hardouin, C.: Galois theories for $q$-difference equations:
  comparison theorems (2011).
\newblock \urlprefix\url{http://arxiv.org/abs/1002.4839}

\bibitem{CharlotteLucia13}
Di~Vizio, L., Hardouin, C.: On the {G}rothendieck conjecture on $p$-curvatures
  for $q$-difference equations (2011).
\newblock \urlprefix\url{http://arxiv.org/abs/1002.4839}

\bibitem{CharlotteLucia12}
Di~Vizio, L., Hardouin, C.: Parameterized generic {G}alois groups for
  $q$-difference equations, followed by the appendix ``{T}he {G}alois
  {$D$}-groupoid of a $q$-difference system'' by {A}nne {G}ranier (2011).
\newblock \urlprefix\url{http://arxiv.org/abs/1002.4839}

\bibitem{DiVizioHardouin:DescentandConfluence}
Di~Vizio, L., Hardouin, C.: Descent for differential {G}alois theory of
  difference equations. {C}onfluence and $q$-dependency.
\newblock Pacific Journal of Mathematics \textbf{256}(1), 79--104 (2012).
\newblock \urlprefix\url{http://dx.doi.org/10.2140/pjm.2012.256.79}

\bibitem{DiVizioHardouinWibmer:DifferenceGaloisofDifferential}
Di~Vizio, L., Hardouin, C., Wibmer, M.: Difference {G}alois theory of linear
  differential equations (2013).
\newblock \urlprefix\url{http://arxiv.org/abs/1302.7198}

\bibitem{eisenbud:view}
Eisenbud, D.: Commutative Algebra with a View Toward Algebraic Geometry.
\newblock No. 150 in GTM. Springer (1995)

\bibitem{EisenbudSturmfels}
Eisenbud, D., Sturmfels, B.: Binomial ideals.
\newblock Duke Mathematical Journal \textbf{84}(1) (1996).
\newblock \urlprefix\url{http://dx.doi.org/10.1215/S0012-7094-96-08401-X}

\bibitem{FGS2005}
Flajolet, P., Gerhold, S., Salvy, B.: On the non-holonomic character of
  logarithms, powers, and the $n$th prime function.
\newblock Electronic Journal of Combinatorics \textbf{11}(2), {\#A2} (2005).
\newblock
  \urlprefix\url{http://www.combinatorics.org/Volume\_11/PDF/v11i2a2.pdf}

\bibitem{FSbook}
Flajolet, P., Sedgewick, R.: Analytic Combinatorics.
\newblock Cambridge University Press (2009)

\bibitem{Ovchinkiovetal:ParameterizedPicardVessiotExtensionsandAtiyahExtension%
s}
Gillet, H., Gorchinskiy, S., Ovchinnikov, A.: Parameterized {P}icard--{V}essiot
  extensions and {A}tiyah extensions.
\newblock Advances in Mathematics \textbf{238}, 322--411 (2013).
\newblock \urlprefix\url{http://dx.doi.org/10.1016/j.aim.2013.02.006}

\bibitem{GoldOst}
Goldberg, A., Ostrovskii, I.: Value Distribution of Meromorphic Functions.
\newblock American Mathematical Society (2008)

\bibitem{GO}
Gorchinskiy, S., Ovchinnikov, A.: Isomonodromic differential equations and
  differential categories (2012).
\newblock \urlprefix\url{http://arxiv.org/abs/1202.0927}

\bibitem{Granier}
Granier, A.: A {G}alois {$D$}-groupoid for $q$-difference equations.
\newblock Annales de l'Institut Fourier \textbf{61}(4), 1493--1516 (2011)

\bibitem{CharlotteArxiv}
Hardouin, C.: Hypertranscendance et groupes de {G}alois aux diff\'erences
  (2006).
\newblock \urlprefix\url{http://arxiv.org/abs/math/0609646}

\bibitem{CharlotteComp}
Hardouin, C.: Hypertranscendance des syst\`emes aux diff\'erences diagonaux.
\newblock Compositio Mathematica \textbf{144}(3), 565--581 (2008).
\newblock \urlprefix\url{http://dx.doi.org/10.1112/S0010437X07003430}

\bibitem{CharlotteMichaelMaple}
Hardouin, C., Singer, M.: Differential independence of solutions of a class of
  $q$-hypergeometric difference equations (2007).
\newblock \urlprefix\url{http://www4.ncsu.edu/~singer/ms_papers.html}.
\newblock Maple worksheet

\bibitem{HardouinSinger:DifferentialGaloisTheoryofLinearDifferenceEquations}
Hardouin, C., Singer, M.F.: Differential {G}alois theory of linear difference
  equations.
\newblock Mathematische Annalen \textbf{342}(2), 333--377 (2008).
\newblock \urlprefix\url{http://dx.doi.org/10.1007/s00208-008-0238-z}

\bibitem{Hay}
Hay, M., Hietarinta, J., Joshi, N., Nijhoff, F.: A {L}ax pair for a lattice
  modified {KdV} equation, reductions to $q$-{P}ainlev\'e equations and
  associated {L}ax pairs.
\newblock Journal of Physics A: Mathematical and Theoretical \textbf{40},
  F61--F73 (2007).
\newblock \urlprefix\url{http://dx.doi.org/10.1088/1751-8113/40/2/F02}

\bibitem{Joshi1}
Joshi, N., Shi, Y.: Exact solutions of a $q$-discrete second {P}ainlev\'e
  equation from its iso-monodromy deformation problem: {I}. {R}ational
  solutions.
\newblock Proceedings of the Royal Society A: Mathematical, Physical, and
  Engineering Sciences \textbf{467}(2136), 3443--3468 (2011).
\newblock \urlprefix\url{http://dx.doi.org/10.1098/rspa.2011.0167}

\bibitem{Joshi2}
Joshi, N., Shi, Y.: Exact solutions of a $q$-discrete second {P}ainlev\'e
  equation from its iso-monodromy deformation problem. {II}. {H}ypergeometric
  solutions.
\newblock Proceedings of the Royal Society A: Mathematical, Physical, and
  Engineering Sciences \textbf{468}(2146), 3247--3264 (2012).
\newblock \urlprefix\url{http://dx.doi.org/10.1098/rspa.2012.0224}

\bibitem{Moshe2012}
Kamensky, M.: Tannakian formalism over fields with operators.
\newblock International Mathematics Research Notices \textbf{361}, 163--171
  (2012).
\newblock \urlprefix\url{http://dx.doi.org/10.1093/imrn/rns190}

\bibitem{Kolchin:constrainedExtensions}
Kolchin, E.R.: Constrained extensions of differential fields.
\newblock Advances in Mathematics \textbf{12}, 141--170 (1974).
\newblock \urlprefix\url{http://dx.doi.org/10.1016/S0001-8708(74)80001-0}

\bibitem{Levin:difference}
Levin, A.: Difference algebra, \emph{Algebra and Applications}, vol.~8.
\newblock Springer, New York (2008).
\newblock \urlprefix\url{http://dx.doi.org/10.1007/978-1-4020-6947-5}

\bibitem{Morikawa}
Morikawa, S.: On a general difference {G}alois theory. {I}.
\newblock Annales de l'Institut Fourier \textbf{59}(7), 2709--2732 (2009)

\bibitem{Murata}
Murata, M.: Lax forms of the $q$-{P}ainlev\'{e} equations.
\newblock Journal of Physics A: Mathematical and Theoretical \textbf{42}(11),
  115,201 (2009).
\newblock \urlprefix\url{http://dx.doi.org/10.1088/1751-8113/42/11/115201}

\bibitem{Nevanlinna1974}
Nevanlinna, R.: La th\'eor\`eme de Picard--Borel et la th\'eorie des fonctions
  m\'eromorphes, 2nd edn.
\newblock Chelsea Publishing Company, Bronx, New York (1974)

\bibitem{Nichols}
Nichols, W.: Quotients of {H}opf algebras.
\newblock Communications in Algebra \textbf{6}(17), 1789--1800 (1978).
\newblock \urlprefix\url{http://dx.doi.org/10.1080/00927877808822321}

\bibitem{Nieto:OnSigmaDeltaPicardVessiotExtension}
Pe\'{o}n~Nieto, A.: On $\sigma\delta$-{P}icard--{V}essiot extensions.
\newblock Communications in Algebra \textbf{39}(4), 1242--1249 (2011).
\newblock \urlprefix\url{http://dx.doi.org/10.1080/00927871003645391}

\bibitem{SingerPut:difference}
van~der Put, M., Singer, M.F.: Galois theory of difference equations,
  \emph{Lecture Notes in Mathematics}, vol. 1666.
\newblock Springer-Verlag, Berlin (1997)

\bibitem{Roques}
Roques, J.: Galois groups of basic hypergeometric equations.
\newblock Pacific Journal of Mathematics \textbf{235}(2), 303--322 (2008).
\newblock \urlprefix\url{http://dx.doi.org/10.2140/pjm.2008.235.303}

\bibitem{SalvyISSAC}
Salvy, B.: {$D$}-finiteness: algorithms and applications.
\newblock In: Proceedings of the International Symposium on Symbolic and
  Algebraic Computation (ISSAC) 2005, pp. 2--3 (2005).
\newblock \urlprefix\url{http://dx.doi.org/10.1145/1073884.1073886}

\bibitem{SalvyGfun}
Salvy, B., Zimmermann, P.: Gfun: a {M}aple package for the manipulation of
  generating and holonomic functions in one variable.
\newblock ACM Transactions on Mathematical Software \textbf{20}(2), 163--177
  (1994).
\newblock \urlprefix\url{http://dx.doi.org/10.1145/178365.178368}

\bibitem{Stanley1980}
Stanley, R.: Differentiably finite power series.
\newblock European Journal of Combinatorics \textbf{1}(2), 175--188 (1980).
\newblock \urlprefix\url{http://www-math.mit.edu/~rstan/pubs/pubfiles/45.pdf}

\bibitem{Takano}
Takano, K.: On the hypertranscendency of solutions of a difference equation of
  {K}imura.
\newblock Funkcialaj Ekvacioj. Serio Internacia \textbf{16}, 241--254 (1973).
\newblock
  \urlprefix\url{http://fe.math.kobe-u.ac.jp/FE/FE_pdf_with_bookmark/FE11-20-e%
n_KML/fe16-241-254/fe16-241-254.pdf}

\bibitem{Trushin:DifferenceNullstellensatz}
Trushin, D.: Difference {N}ullstellensatz (2009).
\newblock \urlprefix\url{http://arxiv.org/abs/0908.3865}

\bibitem{Trushin:DifferenceNullstellsatzCaseOfFiniteGroup}
Trushin, D.: Difference {N}ullstellensatz in the case of finite group (2009).
\newblock \urlprefix\url{http://arxiv.org/abs/0908.3863}

\bibitem{Waterhouse}
Waterhouse, W.: Introduction to affine group schemes.
\newblock Springer, New York (1979).
\newblock \urlprefix\url{http://dx.doi.org/10.1007/978-1-4612-6217-6}

\bibitem{Wibmer:thesis}
Wibmer, M.: Geometric difference {G}alois theory.
\newblock Ph.D. thesis, Heidelberg (2010).
\newblock \urlprefix\url{http://www.ub.uni-heidelberg.de/archiv/10685}

\bibitem{Wibmer:Chevalley}
Wibmer, M.: A {C}hevalley theorem for difference equations.
\newblock Mathematische Annalen \textbf{354}(4), 1369--1396 (2012).
\newblock \urlprefix\url{http://dx.doi.org/10.1007/s00208-011-0770-0}

\bibitem{Wibmer:existence}
Wibmer, M.: Existence of $\partial$-parameterized {P}icard--{V}essiot
  extensions over fields with algebraically closed constants.
\newblock Journal of Algebra \textbf{361}, 163--171 (2012).
\newblock \urlprefix\url{http://dx.doi.org/10.1016/j.jalgebra.2012.03.035}

\bibitem{Wibmer:OnTheGaloisTheoryOfStronglyNormal}
Wibmer, M.: On the {G}alois theory of strongly normal differential and
  difference extensions.
\newblock In: D.~Bertrand, P.~Boalch, J.M. Couveignes, P.~D\`{e}bes (eds.)
  Geometric and differential {G}alois theories, \emph{S\'{e}minaires et
  Congr\`{e}s}, vol.~27, pp. 221--240 (2012).
\newblock
  \urlprefix\url{http://smf4.emath.fr/en/Publications//SeminairesCongres/2012/%
27/pdf/smf_sem-cong_27_221-240.pdf}

\bibitem{Wibmer:Skolem-Mahler-Lech}
Wibmer, M.: Skolem--{M}ahler--{L}ech type theorems and {P}icard--{V}essiot
  theory (2012).
\newblock \urlprefix\url{http://arxiv.org/abs/1203.1449}.
\newblock To appear in the Journal of the European Mathematical Society

\end{thebibliography}

\end{document}